\DeclareFontFamily{OT1}{pzc}{}
\DeclareFontShape{OT1}{pzc}{m}{it}{<-> s * [1.10] pzcmi7t}{}
\DeclareMathAlphabet{\mathpzc}{OT1}{pzc}{m}{it}
\crefname{defin}{Definition}{Definitions}
\crefname{eg}{Example}{Examples}
\crefname{egs}{Example}{Examples}
\crefname{lem}{Lemma}{Lemmas}
\crefname{theo}{Theorem}{Theorems}
\crefname{equation}{}{}
\crefname{enumi}{}{}
\newcommand{\C}{\mathbb{C}}
\newcommand{\N}{\mathbb{N}}
\newcommand{\D}{\mathbb{D}}
\newcommand{\kk}{\Bbbk}
\newcommand{\one}{\mathbbm{1}}
\newcommand{\CC}{\mathcal{C}}
\newcommand{\DD}{\mathcal{D}}
\newcommand{\B}{\mathcal{B}}
\newcommand{\AB}{\mathcal{AB}}
\newcommand{\OB}{\mathcal{OB}}
\newcommand{\AOB}{\mathcal{AOB}}
\renewcommand{\L}{\mathcal{L}} 
\newcommand{\g}{\mathfrak{g}}
\newcommand{\dashsmod}{\textup{-smod}}
\DeclareMathOperator{\op}{op}
\DeclareMathOperator{\End}{End}
\DeclareMathOperator{\Hom}{Hom}
\DeclareMathOperator{\ob}{ob}
\DeclareMathOperator{\tr}{tr}
\DeclareMathOperator{\Tr}{Tr}
\DeclareMathOperator{\str}{str}
\DeclareMathOperator{\Mat}{Mat}
\DeclareMathOperator{\id}{id}
\newcommand{\osp}{\mathfrak{osp}}
\newcommand{\SVec}{\mathpzc{SVec}} 
\newcommand{\scr}{\scriptstyle}
\newcommand{\bm}{\begin{bmatrix}}
\newcommand{\nm}{\end{bmatrix}}
\newcommand{\summ}{\sum\limits}
\newcommand{\go}{{\mathsf{I}}}
\newcommand{\goup}{\uparrow}
\newcommand{\godown}{\downarrow}
\tikzset{anchorbase/.style={>=To,baseline={([yshift=-0.5ex]current bounding box.center)}}}
\tikzset{ % Syntax: \begin{tikzpicture}[centerzero={0,0.2}]
    centerzero/.style={>=To,baseline={([yshift=-0.5ex](#1))}},
    centerzero/.default={0,0}
}
\tikzset{wipe/.style={white,line width=3pt}}
\newcommand\dotlabel[1]{$\scriptstyle{#1}$}
\newcommand\adot[1]{\filldraw[fill=white, draw=black] #1 circle (1.5pt)}
\newcommand\bdot[1]{\filldraw[fill=black, draw=black] #1 circle (1.5pt)}
\newcommand{\ibubclock}[1]{\draw[->,black,fill=Thistle] #1++(.15,0) arc(0:-360:0.15);}
\newcommand{\ibubcounter}[1]{\draw[->,black,fill=YellowOrange] #1++(.15,0) arc(0:360:0.15);}
\newcommand{\purpledot}[1]{\ibubclock{#1}}
\newcommand{\orangedot}[1]{\ibubcounter{#1}}
\newcommand\token[3]{%  \token{position}{anchor}{label}
    \filldraw[blue] (#1) circle (1.5pt) node[anchor=#2] {\dotlabel{#3}}
}
\newcommand\link[2]{% \link{position1}{position2}
	\draw (#1) -- (#2);
	\draw[fill=white, draw=black] (#1) circle (1.5pt);
	\draw[fill=white, draw=black] (#2) circle (1.5pt);
}
\newcommand\dottedlink[2]{% \link{position1}{position2}
	\draw[densely dotted] (#1) -- (#2);
	\draw[fill=white, draw=black] (#1) circle (1.5pt);
	\draw[fill=white, draw=black] (#2) circle (1.5pt);
}
\newcommand\triangleUp[1]{\filldraw[blue] ($#1 + (0,0.075) $) -- ++(0.075,-0.13) -- ++(-0.15,0) -- cycle}
\newcommand\triangleDown[1]{\filldraw[blue] ($#1 + (0,-0.075) $) -- ++(0.075,0.13) -- ++(-0.15,0) -- cycle}
\newcommand\triangleRight[1]{\filldraw[blue] ($#1 + (0.075,0) $) -- ++(-0.13,0.075) -- ++(0,-0.15) -- cycle}
\newcommand\teleportUU[2]{
	\draw[blue] (#1) -- (#2);
	\triangleUp{(#1)};
	\triangleUp{(#2)};
}
\newcommand\teleportDD[2]{
	\draw[blue] (#1) -- (#2);
	\triangleDown{(#1)};
	\triangleDown{(#2)};
}
\newcommand\teleportUD[2]{
	\draw[blue] (#1) -- (#2);
	\triangleUp{(#1)};
	\triangleDown{(#2)};
}
\newcommand\teleportDU[2]{
	\draw[blue] (#1) -- (#2);
	\triangleDown{(#1)};
	\triangleUp{(#2)};
}
\newcommand\teleportRR[2]{
	\draw[blue] (#1) -- (#2);
	\triangleRight{(#1)};
	\triangleRight{(#2)};
}
\newtheorem{theo}{Theorem}[section]
\newtheorem{prop}[theo]{Proposition}
\newtheorem{lem}[theo]{Lemma}
\newtheorem{conj}[theo]{Conjecture}
\theoremstyle{definition}
\newtheorem{defin}[theo]{Definition}
\newtheorem{rem}[theo]{Remark}
\newtheorem{eg}[theo]{Example}
\newtheorem{egs}[theo]{Examples}
\numberwithin{equation}{section}
\begin{document}
%===============

\title{Affine Frobenius Brauer Categories}
\author{Saima Samchuck-Schnarch}
\address{
	Department of Mathematics and Statistics \\
	University of Ottawa \\
	Ottawa, ON K1N 6N5, Canada
}
\email{ssamc090@uottawa.ca}

\keywords{String diagram, monoidal category, supercategory, Lie superalgebra}

\subjclass{18M05, 18M30, 17B10}

\maketitle
\thispagestyle{empty}

\begin{abstract}
	We define the \emph{affine Frobenius Brauer categories} associated to each symmetric involutive Frobenius superalgebra $A$. We then define an action of these categories on the categories of finite-dimensional supermodules for orthosymplectic Lie superalgebras defined over $A$. When $A$ is the base field, we recover the previously-studied affine Brauer category; for other choices of $A$, the categories are novel. Finally, we state a conjecture for bases of homomorphism spaces in affine Frobenius Brauer categories, and outline a potential proof strategy.
\end{abstract}

\setcounter{tocdepth}{2}
\tableofcontents

\section{Introduction}

Brauer algebras were introduced by the eponymous Richard Brauer in \cite{brauerAlgebras}. These algebras are the analogues, for orthogonal and symplectic (and, more generally, orthosymplectic) groups, of the group algebras of symmetric groups that feature in Schur--Weyl duality. The \emph{Brauer category} $\B$, studied extensively by Lehrer and Zhang in \cite{brauerCategory}, is the free linear symmetric monoidal category generated by a single symmetrically self-dual object $\go$; its name reflects the fact that the endomorphism algebras of $\B$ are Brauer algebras. Many results involving Brauer algebras have natural interpretations in terms of $\B$; for instance, Schur--Weyl duality for orthosymplectic groups can be proved by constructing full functors from $\B$ to the categories of finite-dimensional representations of these groups. Many variants of the Brauer category have been defined and studied. The \emph{oriented Brauer category} $\OB$ is the free symmetric monoidal category generated by a single object $\uparrow$ and its dual $\downarrow$, and it is an analogue of $\B$ corresponding to the general linear groups and Lie algebras. More generally, \emph{oriented Frobenius Brauer categories}, denoted $\OB(A)$ (first implicitly appearing as a subcategory of the Frobenius Heisenberg category $\mathcal{H}eis_{A, 0}$ in \cite{Savage_2019}, and then explicitly defined in \cite{mcsween2021affine}), correspond to general linear groups and Lie superalgebras defined over a Frobenius superalgebra $A$. In the unoriented case, one needs to restrict to Frobenius superalgebras equipped with an involution $-^\star$, leading to the Frobenius Brauer categories $\B^\sigma(A, -^\star)$ of \cite{diagrammaticsRealSupergroups}. Oriented and unoriented Frobenius Brauer categories provide a natural framework for proving results about the representation theory of the classical Lie algebras and their Frobenius superalgebra analogues.

There are also the \emph{affine Brauer} and \emph{affine oriented Brauer categories} $\AB$ and $\AOB$, introduced in \cite{ruiSong2018affine} and \cite{Brundan_2017}, respectively. These affine categories act via translation functors on the categories of finite-dimensional supermodules over orthosymplectic and general linear Lie superalgebras, respectively. The affine oriented Brauer category has been generalized to the case of Frobenius algebras. Affine oriented Frobenius Brauer categories $\AOB(A)$ first appeared as a special case of the aforementioned Frobenius Heisenberg categories in \cite{Savage_2019} (namely, the case of central charge $k = 0$), and then were studied more explicitly in \cite{mcsween2021affine}. In the current paper, we will define the \emph{(unoriented) affine Frobenius Brauer category} $\AB(A, -^\star)$ and prove in Theorem \ref{thm:affineFunctor} that it acts on the category of finite-dimensional supermodules for orthosymplectic Lie superalgebras defined over $A$.

The introduction of Frobenius superalgebras to Brauer categories is part of a broader wave of research; many algebraic objects can be naturally generalized by introducing an appropriate sort of algebra into the definition. For instance, Frobenius generalizations of degenerate and non-degenerate affine Hecke algebras were respectively studied in \cite{affineWreathProductAlgebras} and \cite{quantumAffineWreathAlgebras}, then extended to the case of nilHecke algebras in \cite{frobeniusNilHecke}. Along similar lines, Kleshchev and Muth studied generalized Schur superalgebras defined over unital superalgebras in \cite{generalizedSchurAlgebras} and \cite{schurifying}. These sorts of generalizations have also appeared in quantum topology, e.g.\ Khovanov's study of Frobenius algebra enrichments of link homologies in \cite{linkHomology}. Such generalizations often serve to unify existing definitions and results, and help to make clear how proofs and ideas from one case can be applied more generally to obtain novel results. In the following paragraph, we outline one example of this approach in action.

One important application of Frobenius Brauer categories is to the representation theory of real forms of general linear, orthosymplectic, periplectic, and isomeric Lie superalgebras. Such Lie superalgebras have been studied extensively over $\C$, but relatively little is known about the real case. In \cite{diagrammaticsRealSupergroups}, the \emph{incarnation superfunctor} we mention in Proposition \ref{prop:incarnation} and Theorem \ref{thm:affineFunctor} of the current paper was proved to be full when $A$ is a central real division superalgebra, and similarly for the corresponding oriented incarnation superfunctor. Equivalences between various categories of tensor supermodules for real supergroups, analogous to results previously known over $\C$, follow from this fullness; see \cite[Prop.~11.5, 12.5, 13.5]{diagrammaticsRealSupergroups}. The affine Frobenius Brauer categories introduced in the present paper provide further tools for studying real supergroups and real forms of Lie superalgebras.

In the final section of this paper, we will state a conjecture for bases of the homomorphism spaces in $\AB(A, -^\star)$ and sketch a potential method of proof. This basis conjecture generalizes the known result \cite[Thm.~B]{ruiSong2018affine} for $\AB$, and the proof technique draws inspiration from methods used to prove basis results for $\AOB(A)$, $\B(A, -^\star)$, and the nil-Brauer category $\mathcal{NB}_t$ in \cite{Brundan_2021}, \cite{diagrammaticsRealSupergroups}, and \cite{nilBrauer}, respectively.

We conclude this introduction by listing some potential future avenues of research related to affine Frobenius Brauer categories.
\begin{itemize}
	\item One can adjust the definitions of the oriented and unoriented Frobenius Brauer categories to allow for the case of non-symmetric Frobenius superalgebras such as the two-dimensional Clifford superalgebra; this level of generality was addressed in the oriented case in \cite{Savage_2019}, and in the unoriented case in \cite{diagrammaticsRealSupergroups}. The author of the present paper expects that it should be possible to find an appropriate generalization of the unoriented affine Frobenius Brauer category $\AB(A, -^\star)$ such that the corresponding version of Theorem \ref{thm:affineFunctor} holds when $A$ is non-symmetric.
	\item As we will discuss in Remark \ref{rem:oddCase}, the most straightforward generalization of the affine functor $\hat{F}_\Phi$ from Theorem \ref{thm:affineFunctor} does not yield a useful action of the affine dot when considering odd supersymmetric forms. Similar issues previously appeared in the study of the representation theory of periplectic Lie superalgebras $\mathfrak{p}(n)$. In \cite[\S4]{balagovic2019}, Balagovic et al. defined the \emph{fake Casimir elements} $\Omega \in \mathfrak{p}(n) \otimes \mathfrak{p}(n)^\perp$. These fake Casimir elements are used in \cite{affineVW} to define an action of the affine dot that appears in the \emph{affine VW category}. The affine VW category is an analogue of the affine Brauer category $\AB$, defined with respect to an odd form. The defining relations for the affine VW category are similar to those for the affine Frobenius Brauer categories studied in the present paper, but the relations for sliding dots over caps, respectively \eqref{rel:dotCapSlide} and \cite[(R4)]{affineVW}, are quite different, leading to categories with distinct structures. It seems plausible that one could define a Frobenius superalgebra version of the affine VW category, using generalized fake Casimir elements to produce an appropriate action of the affine dot. This would enable further study of real forms of periplectic Lie superalgebras, extending the diagrammatic tools introduced in \cite{diagrammaticsRealSupergroups}.
	\item The \emph{BMW category}, also known as the \emph{quantum Brauer category} or \emph{Kauffman skein category}, is a quantum generalization of the unoriented Brauer category. Its endomorphism algebras are the BMW algebras introduced independently in \cite{BWalgebras} and \cite{Malgebras}, and it has been used to study the representation theory of the quantum enveloping algebras of the special orthogonal and symplectic Lie algebras. Affine BMW algebras were defined in \cite{orellana2004affine}, and a corresponding affine version of the Kauffman category first appeared in the literature in \cite{affineKauffman}. Oriented quantum affine Frobenius Brauer categories are a special case of the quantum Frobenius Heisenberg categories defined in \cite{Brundan_2022}. In the quantum setting, one can define affine categories in terms of (non-symmetric) braided string diagrams wrapped around a cylinder; see e.g.\ \cite{affinizationMonoidalCategories} for details. This topological approach provides an alternate perspective that is not available in the case of symmetric monoidal categories. Defining and studying \emph{unoriented} quantum affine Frobenius categories would be a natural generalization of the work in the present paper. The affine Kauffman/BMW category would be one example of such a category, where the Frobenius algebra is the base field. In general, unoriented quantum affine Frobenius categories would naturally correspond to as-yet-undefined quantum enveloping algebras of orthosymplectic Lie algebras defined over Frobenius algebras. As such, the definition and study of such categories could lead to the discovery of new enveloping algebras. 
\end{itemize}

\section*{Acknowledgements}

This research was supported by a PGS-D scholarship from the Natural Sciences and Engineering Research Council of Canada (NSERC). Thanks to Alistair Savage for his support, guidance, and productive discussions; to Alexandra McSween for her help in understanding oriented Frobenius Brauer categories; to Léo Schelstraete for pointing out an issue with an earlier version of Conjecture \ref{con:basisConjecture}; and to the referee for their detailed and helpful comments and suggestions.

\section{Preliminaries}

\subsection{Conventions}

Throughout this paper, $\kk$ is a field of characteristic different from 2. All vector spaces and tensor products are taken over $\kk$.

If $V$ is a super vector space and $v \in V$ is a homogeneous element, we write $\bar{v}$ for the parity of that element, i.e.\ $\bar{v} = 0$ if $v$ is even, and $\bar{v} = 1$ if $v$ is odd. Whenever we write an expression involving terms of the form $\bar{v}$, we are implicitly assuming/requiring that $v$ is homogeneous. When definitions or proofs are given in terms of homogeneous elements, the full definition or proof follows from linear extension to the whole super vector space.

We write $\one$ for the unit object of a monoidal supercategory, and $\id_X$ for the identity morphism of an object $X$.

Except for Lie superalgebras, all superalgebras in this paper are assumed to be associative and unital.

\subsection{Supercategories}

Much of this paper is concerned with (strict) monoidal supercategories and their associated calculus of string diagrams. We will recall a few key properties here; for full details, see e.g.\ \cite{monoidalSupercategories} and \cite[\S 2, \S 3.1]{saimaMSCThesis}.

We write $\SVec$ for the category whose objects are super vector spaces and whose morphisms are parity-preserving linear maps. A \emph{supercategory} is a category enriched in $\SVec$. Thus a supercategory's hom-sets are super vector spaces, and its composition is parity-preserving, i.e.\ $\overline{f \circ g} = \bar{f} + \bar{g}$. A \emph{superfunctor} is a $\kk$-linear functor between supercategories that preserves the parity of morphisms. A \emph{supernatural transformation} $\alpha \colon F \to G$ of parity $i \in \{0, 1\}$ between two superfunctors $F, G \colon \CC \to \DD$ is a collection of $\DD$-morphisms $\alpha_X \colon F(X) \to G(X)$, ranging over $X \in \ob(\CC)$, such that $\overline{\alpha_X} = i$ for all $X$ and satisfying $G(f) \circ \alpha_X = \alpha_Y \circ (-1)^{i\bar{f}}F(f)$ for all $\CC$-morphisms $f \colon X \to Y$. Note that even supernatural transformations are ordinary natural transformations (all of whose component maps are even), but odd supernatural transformations are only natural transformations up to a sign. A general \emph{supernatural transformation} $\alpha \colon F \to G$ is a sum of an even and an odd supernatural transformation.

Given a supercategory $\CC$ (not necessarily monoidal), the \emph{endofunctor supercategory} $\End(\CC)$ is a strict monoidal supercategory, with the composition and tensor product in $\End(\CC)$ respectively being given by vertical and horizontal composition of supernatural transformations.

We represent morphisms in strict monoidal supercategories via string diagrams, with composition corresponding to vertical stacking and tensor products corresponding to horizontal juxtaposition. The main feature distinguishing string diagrams in the super setting from those in the non-super setting is the existence of the \emph{superinterachange law} for monoidal supercategories: for all morphisms $f \colon X \to Y$ and $g \colon A \to B$, we have 
\begin{equation}
(f \otimes \id_B) \circ (\id_X \otimes g)  = f \otimes g = (-1)^{\overline{f} \overline{g}} (\id_Y \otimes g) \circ (f \otimes \id_A),
\end{equation}
which in the strict case can be drawn as:
\begin{equation}\label{eq:superInterchangeLaw}  \begin{tikzpicture}[anchorbase]
\draw[-] (0, -0.25) -- (0, 1.25);
\draw[-] (0.5, -0.25) -- (0.5, 1.25);
\filldraw[fill=white,draw=black] (0,0.75) circle (5pt);
\node at (0,0.75) {$\scriptstyle{f}$};
\filldraw[fill=white,draw=black] (0.5,0.25) circle (5pt);
\node at (0.5,0.25) {$\scriptstyle{g}$};
\node at (0,-0.5) {$X$};
\node at (0,1.5) {$Y$};
\node at (0.5,-0.5) {$A$};
\node at (0.5,1.5) {$B$};
\end{tikzpicture} \quad = \quad \begin{tikzpicture}[anchorbase]
\draw[-] (0, -0.25) -- (0, 1.25);
\draw[-] (0.5, -0.25) -- (0.5, 1.25);
\filldraw[fill=white,draw=black] (0,0.5) circle (5pt);
\node at (0,0.5) {$\scriptstyle{f}$};
\filldraw[fill=white,draw=black] (0.5,0.5) circle (5pt);
\node at (0.5,0.5) {$\scriptstyle{g}$};
\node at (0,-0.5) {$X$};
\node at (0,1.5) {$Y$};
\node at (0.5,-0.5) {$A$};
\node at (0.5,1.5) {$B$};
\end{tikzpicture} \quad = \quad (-1)^{\overline{f} \overline{g}}
\begin{tikzpicture}[anchorbase]
\draw[-] (0, -0.25) -- (0, 1.25);
\draw[-] (0.5, -0.25) -- (0.5, 1.25);
\filldraw[fill=white,draw=black] (0,0.25) circle (5pt);
\node at (0,0.25) {$\scriptstyle{f}$};
\filldraw[fill=white,draw=black] (0.5,0.75) circle (5pt);
\node at (0.5,0.75) {$\scriptstyle{g}$};
\node at (0,-0.5) {$X$};
\node at (0,1.5) {$Y$};
\node at (0.5,-0.5) {$A$};
\node at (0.5,1.5) {$B$};
\end{tikzpicture}\ .\end{equation}
Note that if either $f$ or $g$ is even, the sign vanishes and we recover the ordinary interchange law for monoidal categories.

\subsection{Frobenius Superalgebras} \label{ss:frobeniusSuperalgebras}

\begin{defin} \label{def:frobeniusSuperalgebra}
	A \emph{Frobenius superalgebra} is a finite-dimensional superalgebra $A$ equipped with a linear functional $\tr \colon A \to \kk$, called the \emph{trace map} for $A$, such that the induced bilinear form $(x, y) \mapsto \tr(xy)$ is nondegenerate. (Equivalently, this says that $\ker(\tr)$ contains no nonzero left ideals.) We call $A$ a \emph{symmetric} Frobenius superalgebra if its trace map satisfies \begin{equation}\label{eq:symmetric}
	\tr(xy) = (-1)^{\bar{x}\bar{y}}\tr(yx)
	\end{equation}
	for all $x, y \in A$.
\end{defin}

Throughout this paper, we require that all trace maps are even.

Given a basis $B_A$ for a symmetric Frobenius superalgebra $A$, there is an associated \emph{left dual basis}, denoted $B_A^\vee = \{b^\vee \mid b \in B_A \}$, satisfying $\tr(b^\vee c) = \delta_{bc}$ for all $b, c \in B_A$. Note that $\overline{b^\vee} = \bar{b}$. It is straightforward to show that
\begin{equation} \label{eq:doubleDual}
(b^\vee)^\vee  = (-1)^{\bar{b}}b
\end{equation} 
for all $b \in B_A$, where on the left hand side we are taking duals with respect to $B_A^\vee$. We also have
\begin{equation} \label{eq:traceDecomposition}
	a = \summ_{b \in B_A} \tr(b^\vee a)b = \summ_{b \in B_A} \tr(ab)b^\vee \text{ for all } a \in A.
\end{equation}

\begin{defin}
	An \emph{involution} on a superalgebra $A$ is an even self-inverse $\kk$-linear endomorphism $-^\star \colon A \to A$ satisfying $(xy)^\star = (-1)^{\bar{x}\bar{y}}y^\star x^\star$ for all $x, y \in A$. Equivalently, an involution is an even self-inverse algebra homomorphism from $A$ to $A^{\op}$. (Note that some authors refer to such maps as \emph{anti-involutions}, and instead use the term ``involution'' to refer to self-inverse maps satisfying $(xy)^\star = x^\star y^\star$.)
	
	An involution $-^\star$ on a Frobenius superalgebra $(A, \tr)$ is said to be \emph{compatible with the trace map on $A$} if it satisfies 
	\begin{equation} \label{eq:traceCompatibility}
	\tr(x^\star) = \tr(x)
	\end{equation} for all $x \in A$.
	
	A \emph{symmetric involutive Frobenius superalgebra} is a symmetric Frobenius superalgebra equipped with a compatible involution $-^\star$.
\end{defin}

\begin{egs} \label{ex:basicFrobenius}
	The (purely even) two-dimensional real algebra $\C$ becomes a symmetric involutive Frobenius superalgebra when equipped with the trace map $\tr(x + iy) = x$ and the involution given by complex conjugation. Similarly, the (purely even) real quaternion algebra $$\mathbb{H} = \langle i, j, k \mid i^2 = j^2 = k^2 = ijk = -1 \rangle$$ is a symmetric involutive Frobenius superalgebra with respect to the trace map $\tr(a + ib + jc + kd) = a$ and the involution $(a + ib + jc + kd)^\star = a - ib - jc - kd$.
	
	Further examples of symmetric involutive Frobenius superalgebras include finite group algebras, zigzag superalgebras, and truncated polynomial algebras.
\end{egs}

\begin{eg} \label{ex:frobeniusMatrix}
	If $(A, \tr_A, -^\star)$ is any involutive Frobenius superalgebra, the matrix superalgebra $\Mat_{m \mid n}(A)$ is itself a Frobenius superalgebra with trace map $\tr = \tr_A \circ \str$. Here, $\str$ denotes the \emph{supertrace}, given on a supermatrix in block form by $$\str\left(\bm M_{00} & M_{01} \\ M_{10} & M_{11} \nm\right) = \Tr(M_{00}) - (-1)^{\overline{M_{11}}}\Tr(M_{11}),$$ where $\Tr$ is the ordinary matrix trace. This Frobenius superalgebra is symmetric if and only if $A$ is. When $n$ is even, there is always a compatible involution on $\Mat_{m \mid n}(A)$ called the \emph{generalized orthosymplectic involution}. Such an involution can also be constructed when $n$ is odd for certain choices of $A$; see Remark \ref{rem:generalizedInvolution} for further details.
\end{eg}

\begin{lem}
	Let $(A, \tr, -^\star)$ be a symmetric involutive Frobenius superalgebra. Let $B_A$ be a basis of $A$. For all $x, y \in A$ and $b \in B_A$, we have:
	\begin{equation} \label{eq:traceDoubleStar}
	\tr(xy) = \tr(x^\star y^\star), \quad \quad \tr(x^\star y) = \tr(x y^\star),
	\end{equation}
	\begin{equation} \label{eq:dualStar}
	(b^\star)^\vee = (b^\vee)^\star,
	\end{equation}
	where on the left of \eqref{eq:dualStar} we are taking duals with respect to the basis $B_A^\star = \{b^\star \mid b \in B_A \}$.
\end{lem}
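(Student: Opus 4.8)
The plan is to derive the trace identities \eqref{eq:traceDoubleStar} directly from the trace-compatibility condition \eqref{eq:traceCompatibility}, the anti-multiplicativity of $-^\star$, and the symmetry \eqref{eq:symmetric}, and then to obtain \eqref{eq:dualStar} by checking that $(b^\vee)^\star$ satisfies the defining property of the left dual of $b^\star$ with respect to the basis $B_A^\star$, invoking uniqueness of dual bases.

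For the first equality in \eqref{eq:traceDoubleStar}, I would start from $\tr(xy) = \tr\bigl((xy)^\star\bigr)$ using \eqref{eq:traceCompatibility}, rewrite $(xy)^\star = (-1)^{\bar x \bar y} y^\star x^\star$, and then apply \eqref{eq:symmetric} to interchange $y^\star$ and $x^\star$, noting that $-^\star$ is even so that $\overline{x^\star} = \bar x$ and $\overline{y^\star} = \bar y$. The two resulting factors of $(-1)^{\bar x \bar y}$ cancel, leaving $\tr(xy) = \tr(x^\star y^\star)$. The second equality then follows by substituting $y \mapsto y^\star$ into the first and using that $-^\star$ is self-inverse, so that $(y^\star)^\star = y$.

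For \eqref{eq:dualStar}, I would first observe that since $-^\star$ is an even linear bijection, $B_A^\star = \{c^\star \mid c \in B_A\}$ is again a basis of $A$, so it has a well-defined left dual basis $\{(c^\star)^\vee \mid c \in B_A\}$, characterized by $\tr\bigl((c^\star)^\vee d^\star\bigr) = \delta_{cd}$ for all $c, d \in B_A$. It then suffices to verify that $(b^\vee)^\star$ has this defining property, i.e.\ that $\tr\bigl((b^\vee)^\star c^\star\bigr) = \delta_{bc}$ for all $c \in B_A$. By the first identity in \eqref{eq:traceDoubleStar}, applied with $x = b^\vee$ and $y = c$, this trace equals $\tr(b^\vee c) = \delta_{bc}$, as required. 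Nondegeneracy of the Frobenius bilinear form guarantees the dual basis is unique, so we conclude $(b^\star)^\vee = (b^\vee)^\star$.

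None of these steps presents a genuine obstacle; the only points requiring care are the sign bookkeeping in the derivation of the first trace identity — in particular, using that $-^\star$ preserves parity so that the exponents in the two sign factors genuinely match — and indexing the dual basis of $B_A^\star$ consistently so that the uniqueness argument applies cleanly.
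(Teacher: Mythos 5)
Your proposal is correct and matches the paper's proof step for step: the first trace identity via compatibility, anti-multiplicativity, and symmetry with the sign cancellation; the second by self-inverseness; and the dual-basis identity by checking $\tr\bigl((b^\vee)^\star c^\star\bigr) = \tr(b^\vee c) = \delta_{bc}$ and invoking uniqueness of dual bases. No differences worth noting.
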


\begin{proof}
	For the first identity, we have
	$$ \tr(xy) \overset{\eqref{eq:traceCompatibility}}{=} \tr((xy)^\star)
	= (-1)^{\bar{x}\bar{y}}\tr(y^\star x^\star)
	\overset{\eqref{eq:symmetric}}{=} \tr(x^\star y^\star).$$
	The second identity follows from the fact that $-^\star$ is self-inverse. For the third identity, we have the following for all $b, c \in B_A$:
	$$ \tr((c^\vee)^\star b^\star)
	\overset{\eqref{eq:traceDoubleStar}}{=} \tr(c^\vee b) = \delta_{bc},$$
	as desired.
\end{proof}

\subsection{Supermodules and Lie Superalgebras} \label{ss:supermodules}

For the rest of the paper, fix a symmetric involutive Frobenius superalgebra $(A, \tr, -^\star)$ and a homogeneous $\kk$-basis $B_A$ of $A$. For the rest of this section, let $V$ be a right $A$-supermodule.

\begin{defin} \label{def:naturalSupermodule}
	Let $m, n \in \N$. We write $A^{m \mid n}$ for the right $A$-supermodule that is equal to $A^{m + n}$ as a module, with element parities determined by $\overline{ae_i} = \bar{a} + p(i)$ for $a \in A$ and $1 \leq i \leq m + n$, where $e_i$ denotes the vector with a 1 in position $i$ and zeroes elsewhere, and $$p(i) = \begin{cases} 0 &\text{if } 1 \leq i \leq m, \\ 1 &\text{if } m + 1 \leq i \leq m + n. \end{cases}$$
\end{defin}

\begin{defin}
	A \emph{$-^\star$-sesquilinear form} on $V$ is an even $\kk$-bilinear map $\varphi \colon V \times V \to A$ that satisfies the following identity for all $a, b \in A$ and $v, w \in V$:
	\begin{equation}
	\varphi(va, wb) = (-1)^{\bar{a}\bar{v}}a^\star \varphi(v, w)b.
	\end{equation}
	Let $\nu \in \{1, -1\}$. A \emph{($\nu, -^\star$)-superhermitian form} on $V$ is a $-^\star$-sesquilinear form $\varphi$ that additionally satisfies the following identity for all $v, w \in V$:
	\begin{equation} \label{def:superhermitian}
	\varphi(v, w) = \nu(-1)^{\bar{v}\bar{w}}\varphi(w, v)^\star.
	\end{equation}
	A $-^\star$-sesquilinear form $\varphi$ is called \emph{nondegenerate} if the map $v \mapsto \varphi(v, -)$ is an injective $A$-supermodule homomorphism $V \to \Hom_A(V, A)$, and \emph{unimodular} if that map is an $A$-supermodule isomorphism.
\end{defin}

Note that we require sesquilinear forms (and supersymmetric forms, which we will define shortly) to be even in this paper; see Remark \ref{rem:oddCase} for a discussion of why we exclude odd forms.

\begin{eg} \label{ex:hermitian}
	Let $-^\star \colon \C \to \C$ denote complex conjugation, and suppose $V$ is a purely even right $\C$-supermodule. Then a $(1, -^\star)$-superhermitian form on $V$ is an ordinary hermitian form, and a $(-1, -^\star)$-superhermitian form is a skew-hermitian form. If we instead use $\id$ as our involution, a $(1, \id)$-superhermitian form is a symmetric $\C$-bilinear form, and a $(-1, \id)$-superhermitian form is a skew-symmetric $\C$-bilinear form.
\end{eg}

\begin{lem}[{\cite[Lem.~7.11, Lem.~7.12]{diagrammaticsRealSupergroups}}] \label{lem:generalizedOrthosymplecticInvolution}
	Suppose $\varphi$ is a unimodular superhermitian form on $V$. For all $X \in \End_A(V)$, there exists a unique $X^\dagger \in \End_A(V)$, called the map \emph{adjoint} to $X$, that satisfies the following identity for all $v, w \in V$:
	\begin{equation} \label{def:dagger}
	\varphi(v, Xw) = (-1)^{\bar{X}\bar{v}}\varphi(X^\dagger v, w).
	\end{equation}
	Moreover, the map $X \to X^\dagger$ is an involution of the superalgebra $\End_A(V)$ known as the \emph{generalized orthosymplectic involution}.
\end{lem}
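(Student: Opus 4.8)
The plan is to build $X^\dagger$ one input vector at a time using unimodularity, promote the resulting $\kk$-linear map to an honest element of $\End_A(V)$, and then verify the involution axioms, keeping the $(\nu,\star)$-superhermitian symmetry in reserve for the self-inverse property.

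First I would fix homogeneous $X \in \End_A(V)$ and $v \in V$ and consider the map $\theta_v \colon V \to A$, $w \mapsto (-1)^{\bar{X}\bar{v}}\varphi(v, Xw)$. Because $X$ is right $A$-linear and $\varphi$ is right $A$-linear in its second argument, $\theta_v$ is a homogeneous element of $\Hom_A(V, A)$. Unimodularity says $\rho \colon u \mapsto \varphi(u, -)$ is a bijection onto $\Hom_A(V, A)$, so there is a unique $u \in V$ with $\varphi(u, -) = \theta_v$; comparing parities in \eqref{def:dagger} forces $u$ to be homogeneous of parity $\bar{X} + \bar{v}$. Set $X^\dagger v := u$. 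The bilinearity of $\varphi$ and the $\kk$-linearity of $X$, together with the uniqueness of $u$, force $v \mapsto X^\dagger v$ to be additive and $\kk$-linear on homogeneous vectors, so it extends to a homogeneous $\kk$-linear endomorphism of $V$ of parity $\bar{X}$ satisfying \eqref{def:dagger}. Uniqueness of $X^\dagger$ as an endomorphism is then immediate from injectivity of $\rho$: any $Y$ satisfying \eqref{def:dagger} in place of $X^\dagger$ has $\varphi(Yv, -) = \varphi(X^\dagger v, -)$ for all $v$, hence $Y = X^\dagger$.

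Next I would check $X^\dagger \in \End_A(V)$, i.e.\ that $X^\dagger(va) = (X^\dagger v)a$ for homogeneous $a \in A$. Expanding $\varphi(X^\dagger(va), w)$ with \eqref{def:dagger} and then $\star$-sesquilinearity, and separately expanding $\varphi((X^\dagger v)a, w)$ with $\star$-sesquilinearity and then \eqref{def:dagger}, both sides become the same scalar multiple of $a^\star \varphi(v, Xw)$ once the Koszul signs are collated; injectivity of $\rho$ then gives the claim. The anti-multiplicativity $(XY)^\dagger = (-1)^{\bar{X}\bar{Y}}Y^\dagger X^\dagger$ follows by applying \eqref{def:dagger} twice to $\varphi(v, (XY)w)$ and using injectivity of $\rho$ a final time, while $\kk$-linearity of $\dagger$ and $\id^\dagger = \id$ fall out of the uniqueness clause; thus $\dagger$ is a parity-preserving algebra anti-homomorphism.

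The only step that genuinely uses superhermitianity is $(X^\dagger)^\dagger = X$. Here I would start with $\varphi(Xv, w)$, rewrite it via $\varphi(a,b) = \nu(-1)^{\bar{a}\bar{b}}\varphi(b,a)^\star$, apply \eqref{def:dagger} to the inner pairing, apply the symmetry a second time, and finally use that $\star$ is self-inverse and $\nu^2 = 1$; the $\bar{X}$-dependent signs cancel in pairs, leaving $\varphi(v, X^\dagger w) = (-1)^{\bar{X}\bar{v}}\varphi(Xv, w)$, which by the uniqueness already established is exactly $(X^\dagger)^\dagger = X$. I expect the main obstacle to be purely bookkeeping rather than conceptual: ensuring that the $a^\star$ produced by $\star$-sesquilinearity is compatible with — rather than an obstruction to — right $A$-linearity of $X^\dagger$, and that the parity exponents in the repeated applications of \eqref{def:dagger} and the superhermitian identity are tracked consistently. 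I would therefore fix the parity conventions once at the outset and reuse them throughout.
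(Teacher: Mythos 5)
The paper does not prove this lemma; it cites it directly from \cite[Lem.~7.11, Lem.~7.12]{diagrammaticsRealSupergroups}, so there is no in-paper proof to compare against. Your proof is the expected argument, and it is correct: the key point is that for each homogeneous $v$ the functional $w \mapsto (-1)^{\bar{X}\bar{v}}\varphi(v,Xw)$ lies in $\Hom_A(V,A)$ (right $A$-linearity of $X$ plus $\varphi(v,wb)=\varphi(v,w)b$), and unimodularity lets you pull it back to a unique element $X^\dagger v$ of parity $\bar{X}+\bar{v}$. Your sign bookkeeping also checks out: in the right-$A$-linearity check, both sides of $\varphi(X^\dagger(va),w)$ vs.\ $\varphi((X^\dagger v)a,w)$ reduce to $(-1)^{\bar{X}\bar{v}+\bar{X}\bar{a}+\bar{a}\bar{\varphi}+\bar{a}\bar{v}}a^\star\varphi(v,Xw)$; anti-multiplicativity gives $(XY)^\dagger=(-1)^{\bar{X}\bar{Y}}Y^\dagger X^\dagger$ cleanly; and in the involutive step, applying superhermitianity twice leaves exactly $(-1)^{\bar{X}\bar{v}}$ after the $\bar{v}\bar{w}$ and $\bar{X}\bar{w}$ pairs cancel (note that not all $\bar{X}$-dependent signs cancel --- one survives as the required sign --- but the net result is correct). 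One phrasing you should tighten for the record: the identity \eqref{def:dagger} is only stated for homogeneous $v$, so ``$\kk$-linearity on homogeneous vectors'' really means additivity within each graded piece plus scalar multiplication, after which $X^\dagger$ is defined on all of $V$ by linear extension; uniqueness is then uniqueness of a homogeneous linear map with prescribed restriction to each graded piece.
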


\begin{rem} \label{rem:generalizedInvolution}
	For any symmetric involutive Frobenius superalgebra $(A, \tr, -^\star)$, one can define a $(1, -^\star)$-superhermitian form on $A^{m \mid n}$ when $n$ is even; see \cite[\S5.4]{saimaMSCThesis} for details. For some choices of $A$, one can define superhermitian forms on $A^{m \mid n}$ when $n$ is odd as well; see \cite[\S{A.3}, \S{A.4}, \S{A.5}]{diagrammaticsRealSupergroups} for some examples. In either case, after identifying $\End_A(A^{m \mid n})$ and $\Mat_{m \mid n}(A)$ in the usual way, Lemma \ref{lem:generalizedOrthosymplecticInvolution} yields an involution on $\Mat_{m \mid n}(A)$. We will prove that generalized orthosymplectic involutions are compatible with supertraces in Lemma \ref{lem:strNondegenerate1}.
\end{rem}

\begin{defin} \label{def:orthosymplecticLieSuperalgebra}
	Let $\varphi$ be a unimodular superhermitian form on $V$. The \emph{orthosymplectic Lie superalgebra associated to $\varphi$ and $A$} is defined as
	\begin{equation} \label{def:osp}
	\osp(\varphi) = \{X \in \End_A(V) \mid X^\dagger = -X \},
	\end{equation}
	with Lie superbracket given by the supercommutator, i.e.\ $[X, Y] = XY - (-1)^{\bar{X}\bar{Y}}YX$.
\end{defin}

It is straightforward to show that
$$\osp(\varphi) = \{X \in \End_A(V) \mid \varphi(Xv, w) = -(-1)^{\bar{X}\bar{v}}\varphi(v, Xw)\text{ for all } v, w \in V \}.$$ Note that when $A = \kk$, Definition \ref{def:orthosymplecticLieSuperalgebra} recovers the usual definition for the orthosymplectic Lie superalgebra associated to a nondegenerate supersymmetric form $\varphi$.

\begin{defin}
	Let $\nu \in \{1, -1\}$. A \emph{$(\nu, -^\star)$-supersymmetric form} on $V$ is an even $\kk$-bilinear map $\Phi \colon V \times V \to \kk$ that satisfies the following identities for all $a \in A$ and $v, w \in V$:
	\begin{equation} \label{def:superSymmetric}
	\Phi(v, w) = \nu(-1)^{\bar{v}\bar{w}}\Phi(w, v), \quad \quad \Phi(va, w) = (-1)^{\bar{a}\bar{w}}\Phi(v, wa^\star).
	\end{equation}
\end{defin}

\begin{lem}[{\cite[Lem.~7.10]{diagrammaticsRealSupergroups}}] \label{lem:hermitianToSymmetric}
	If $\varphi$ is a nondegenerate $(\nu, -^\star)$-superhermitian form on $V$, then the map $\Phi := \tr\circ\,\varphi$ is a nondegenerate $(\nu, -^\star)$-supersymmetric form on $V$.
\end{lem}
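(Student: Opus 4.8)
The plan is to check the three defining conditions of a $(\nu,\star)$-supersymmetric form for $\Phi=\tr\circ\,\varphi$ one at a time, and then to derive nondegeneracy of $\Phi$ from nondegeneracy of $\varphi$ together with nondegeneracy of the Frobenius form $(x,y)\mapsto\tr(xy)$ on $A$. Homogeneity of $\Phi$ is immediate: $\tr$ is even and $\varphi$ is homogeneous, so $\Phi$ is homogeneous with $\bar\Phi=\bar\varphi$.

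For the symmetry identity in \eqref{def:superSymmetric}, I would apply $\tr$ to the superhermitian relation $\varphi(v,w)=\nu(-1)^{\bar v\bar w}\varphi(w,v)^\star$ and use trace compatibility \eqref{eq:traceCompatibility} in the form $\tr(\varphi(w,v)^\star)=\tr(\varphi(w,v))$ to remove the $\star$, which gives $\Phi(v,w)=\nu(-1)^{\bar v\bar w}\Phi(w,v)$. For the module identity, I would first expand $\varphi(va,w)=(-1)^{\bar a(\bar\varphi+\bar v)}a^\star\varphi(v,w)$ using $\star$-sesquilinearity (with the second argument untouched), apply $\tr$, move $a^\star$ past $\varphi(v,w)$ using the symmetry \eqref{eq:symmetric} of the trace (this contributes $(-1)^{\bar a(\bar\varphi+\bar v+\bar w)}$, since $\overline{a^\star}=\bar a$ and $\overline{\varphi(v,w)}=\bar\varphi+\bar v+\bar w$), and finally rewrite $\varphi(v,w)a^\star=\varphi(v,wa^\star)$, again by $\star$-sesquilinearity. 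The two accumulated signs multiply to $(-1)^{\bar a(2\bar\varphi+2\bar v+\bar w)}=(-1)^{\bar a\bar w}$, which is exactly the sign required in \eqref{def:superSymmetric}.

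For nondegeneracy, suppose $v\in V$ satisfies $\Phi(v,w)=0$ for all $w\in V$. Fixing $w$ and letting $a$ range over $A$, we also have $0=\Phi(v,wa)=\tr(\varphi(v,w)a)$, using $\varphi(v,wa)=\varphi(v,w)a$; since the Frobenius form on $A$ is nondegenerate, this forces $\varphi(v,w)=0$. As $w$ was arbitrary, the map $v\mapsto\varphi(v,-)$ kills $v$, so nondegeneracy of $\varphi$ gives $v=0$; the left-hand version of the condition follows from the supersymmetry of $\Phi$ just established. I expect the only step requiring genuine care to be the sign bookkeeping in the module-identity computation; everything else is a formal manipulation with the trace axioms and the definition of $\star$-sesquilinearity.
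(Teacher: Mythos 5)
The paper does not prove this lemma; it is imported verbatim by citation to \cite[Lem.~7.10]{diagrammaticsRealSupergroups}. Your proof is correct and is the natural direct verification one would expect: the sign bookkeeping for the module identity checks out, with $(-1)^{\bar a(\bar\varphi+\bar v)}$ from $\star$-sesquilinearity and $(-1)^{\bar a(\bar\varphi+\bar v+\bar w)}$ from the trace symmetry \eqref{eq:symmetric} combining to $(-1)^{\bar a\bar w}$, and the nondegeneracy argument correctly leverages the nondegeneracy of the Frobenius pairing on $A$ together with the right $A$-linearity $\varphi(v,wa)=\varphi(v,w)a$ to pass from vanishing of $\Phi(v,-)$ to vanishing of $\varphi(v,-)$.
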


For the remainder of this section, we specialize to the case $V = A^{m \mid n}$ for some $m, n \in \N$, and fix a homogeneous $A$-basis $B_V$ for $V$.

\begin{defin} \label{def:supertrace}
	Let $\varphi$ be a unimodular $(\nu, -^\star)$-superhermitian form on $V$. The \emph{supertrace with respect to $\varphi$} is the map $\str_\varphi \colon \End_A(V) \to A$ given by:
	$$\str_\varphi(X) = \summ_{b \in B_V}\varphi(Xb, b^\vee).$$
	(The duals for $B_V$ are taken with respect to $\varphi$, satisfying $\varphi(b^\vee, c) = \delta_{bc}$ for all $b, c \in B_V$.)
\end{defin}

Quick calculations show that $\str_\varphi$ is independent of the choice of basis $B_V$, and that for all $b \in B_V$,
\begin{equation} \label{eq:doubleDualV}
	(b^\vee)^\vee = \nu (-1)^{\bar{b}} b.
\end{equation}

\begin{lem} \label{lem:strNondegenerate1}
	If $\varphi$ is a unimodular $(\nu, -^\star)$-superhermitian form on $V$, then $(X, Y) \mapsto \tr(\str_\varphi(XY))$ is a $(\nu, -^\star)$-supersymmetric nondegenerate bilinear form on $\End_A(V)$, and for all $X \in \End_A(V)$,
	\begin{equation} \label{eq:strDagger}
	\str_\varphi(X^\dagger) = \str_\varphi(X).
	\end{equation}
\end{lem}

\begin{proof}
	The nondegeneracy and supersymmetry of $(X, Y) \mapsto \tr(\str_\varphi(XY))$ follow from the fact that $\varphi$ is unimodular and superhermitian. For $X \in \End_A(V)$, we compute:
	\begin{multline*}
	\tr(\str_\varphi(X^\dagger))
	= 
	\tr\left(\summ_{b \in B_V} \varphi(X^\dagger b, b^\vee)\right)
	\overset{\eqref{def:dagger}}{=}
	\tr\left(\summ_{b \in B_V} (-1)^{\bar{X}\bar{b}}\varphi(b, X b^\vee)\right)
	\\
	\overset{\eqref{def:superhermitian}}{=}
	\tr\left(\summ_{b \in B_V} \nu(-1)^{\bar{b}}\varphi(X b^\vee, b)^\star\right)
	\overset{\eqref{eq:doubleDualV}}{=}
	\tr\left(\summ_{b \in B_V} \varphi(X b, b^\vee)^\star \right)
	\\
	\overset{\eqref{eq:traceCompatibility}}{=}
	\tr\left(\summ_{b \in B_V} \varphi(X b, b^\vee)\right)
	= \tr(\str_\varphi(X)),
	\end{multline*}
	switching to a sum over the dual basis in the fourth equality.
\end{proof}

\begin{lem} \label{lem:strNondegenerate2}
	If $\varphi$ is a unimodular $(\nu, -^\star)$-superhermitian form on $V$, then $(X, Y) \mapsto \tr(\str_\varphi(XY))$ is a nondegenerate bilinear form on $\osp(\varphi)$.
\end{lem}

\begin{proof}
	Let $X \in \osp(\varphi)$ be a nonzero element. By the definition of $\osp(\varphi)$, we have $X^\dagger = -X$, and hence $X - X^\dagger = 2X$ is nonzero. By Lemma \ref{lem:strNondegenerate1}, there exists some $y \in \End_A(V)$ such that $\tr(\str_\varphi((X - X^\dagger)y) \neq 0$. Set $Y = y - y^\dagger$. Since $-^\dagger$ is an involution, we have that $Y^\dagger = -Y$, i.e.\ $Y \in \osp(\varphi)$. Then:
	\begin{multline*}
		\tr(\str_\varphi(XY))
		=
		\tr(\str_\varphi(Xy)) - \tr(\str_\varphi(Xy^\dagger))
		\overset{\eqref{eq:strDagger}}{=}
		\tr(\str_\varphi(Xy)) - \tr(\str_\varphi((Xy^\dagger)^\dagger))
		\\
		=
		\tr(\str_\varphi(Xy)) - \tr(\str_\varphi(X^\dagger y))
		=
		\tr(\str_\varphi((X - X^\dagger)y)
		\neq
		0,
	\end{multline*}	
	showing the desired nondegeneracy.
\end{proof}

\section{Frobenius Brauer Categories} \label{s:brauerCategories}

In this section, we recall the definition and key properties of the Frobenius Brauer categories $\B(A, -^\star)$, and then define and study \emph{teleporter} morphisms in those categories. We use these teleporters to define the affine Frobenius Brauer categories $\AB(A, -^\star)$, and then prove Theorem~\ref{thm:affineFunctor}, which states that these affine categories have a natural action on the corresponding categories of supermodules for orthosymplectic Lie superalgebras defined over $A$.

\subsection{Basic Properties and the Incarnation Superfunctor}

\begin{defin}[{\cite[Def.~9.1]{diagrammaticsRealSupergroups}}] \label{def:frobeniusBrauerCategory}
	The \emph{Frobenius Brauer category associated to $(A, -^\star)$}, denoted $\B(A, -^\star)$, is the strict monoidal $\kk$-linear supercategory generated by a single object $\go$ and the morphisms
	$$\begin{tikzpicture}[anchorbase]
	\draw[-] (0, 0) -- (0.4, 0.4);
	\draw[-] (0.4, 0) -- (0, 0.4);
	\end{tikzpicture} \colon \go \otimes \go \to \go \otimes \go, \quad 
	\begin{tikzpicture}[anchorbase]
	\draw[-] (0, 0) -- (0, 0.2) arc(180:0:0.2) -- (0.4, 0);
	\end{tikzpicture} \colon \go \otimes \go \to \one, \quad
	\begin{tikzpicture}[anchorbase, yscale=-1]
	\draw[-] (0, 0) -- (0, 0.2) arc(180:0:0.2) -- (0.4, 0);
	\end{tikzpicture} \colon \one \to \go \otimes \go, \quad
	\begin{tikzpicture}[anchorbase]
	\draw[-] (0, 0) -- (0, 0.4);
	\token{0, 0.2}{west}{a};
	\end{tikzpicture} \colon \go \to \go \quad  a \in A,$$ 
	subject to the following relations:
	\begin{equation} \label{rel:brauer}
	\begin{tikzpicture}[anchorbase]
	\draw[-] (0, 0) -- (0, 0.2) to[out=90, in=270] (0.4, 0.6) to[out=90, in=270] (0, 1) -- (0, 1.2);
	\draw[-] (0.4, 0) -- (0.4, 0.2) to[out=90, in=270] (0, 0.6) to[out=90, in=270] (0.4, 1) -- (0.4, 1.2);
	\end{tikzpicture}
	\ =\
	\begin{tikzpicture}[anchorbase]
	\draw[-] (0, 0) -- (0, 1.2);
	\draw[-] (0.4, 0) -- (0.4, 1.2);
	\end{tikzpicture}\ ,
	\quad
	\begin{tikzpicture}[anchorbase]
	\draw[-] (0, 0) -- (0, 0.2) to[out=90, in=270] (0.4, 0.6) to[out=90, in=270] (0.8, 1) -- (0.8, 1.6);
	\draw[-] (0.4, 0) -- (0.4, 0.2) to[out=90, in=270] (0, 0.6) -- (0, 1) to[out=90, in=270] (0.4, 1.4) -- (0.4, 1.6);
	\draw[-] (0.8, 0) -- (0.8, 0.6) to[out=90, in=270] (0.4, 1) to[out=90, in=270] (0, 1.4) -- (0, 1.6);
	\end{tikzpicture}
	\ =\
	\begin{tikzpicture}[anchorbase, xscale=-1]
	\draw[-] (0, 0) -- (0, 0.2) to[out=90, in=270] (0.4, 0.6) to[out=90, in=270] (0.8, 1) -- (0.8, 1.6);
	\draw[-] (0.4, 0) -- (0.4, 0.2) to[out=90, in=270] (0, 0.6) -- (0, 1) to[out=90, in=270] (0.4, 1.4) -- (0.4, 1.6);
	\draw[-] (0.8, 0) -- (0.8, 0.6) to[out=90, in=270] (0.4, 1) to[out=90, in=270] (0, 1.4) -- (0, 1.6);
	\end{tikzpicture}\ ,
	\quad
	\begin{tikzpicture}[anchorbase, xscale=-1]
	\draw[-] (0, 0) -- (0, 0.8) arc(180:0:0.2) -- (0.4, 0.6) arc(180:360:0.2) -- (0.8, 1.4);
	\end{tikzpicture}
	\ =\
	\begin{tikzpicture}[anchorbase]
	\draw[-] (0, 0) -- (0, 1.4);
	\end{tikzpicture}
	\ =\
	\begin{tikzpicture}[anchorbase]
	\draw[-] (0, 0) -- (0, 0.8) arc(180:0:0.2) -- (0.4, 0.6) arc(180:360:0.2) -- (0.8, 1.4);
	\end{tikzpicture}\ ,
	\quad
	\begin{tikzpicture}[anchorbase]
	\draw[-] (0, 0) -- (0, 0.2) to[out=90, in=270] (0.4, 0.6) arc(0:180:0.2) to[out=270, in=90] (0.4, 0.2) -- (0.4, 0);
	\end{tikzpicture}
	\ =\
	\begin{tikzpicture}[anchorbase]
	\draw[-] (0, 0) -- (0, 0.2) arc(180:0:0.2) -- (0.4, 0);
	\end{tikzpicture}\ ,
	\quad
	\begin{tikzpicture}[anchorbase]
	\draw[-] (0, 0) -- (0, 0.2) to[out=90, in=270] (0.4, 0.6) arc(180:0:0.2) -- (0.8, 0);
	\draw[-] (0.4, 0) -- (0.4, 0.2) to[out=90, in=270] (0, 0.6) -- (0, 1);
	\end{tikzpicture}
	\ =\
	\begin{tikzpicture}[anchorbase, xscale=-1]
	\draw[-] (0, 0) -- (0, 0.2) to[out=90, in=270] (0.4, 0.6) arc(180:0:0.2) -- (0.8, 0);
	\draw[-] (0.4, 0) -- (0.4, 0.2) to[out=90, in=270] (0, 0.6) -- (0, 1);
	\end{tikzpicture}\ ,
	\end{equation}
	\begin{equation} \label{rel:tokens}
	\begin{tikzpicture}[anchorbase]
	\draw[-] (0, 0) -- (0, 1);
	\token{0, 0.5}{east}{1_A};
	\end{tikzpicture}
	\ =\
	\begin{tikzpicture}[anchorbase]
	\draw[-] (0, 0) -- (0, 1);
	\end{tikzpicture}\ ,
	\quad 
	\lambda\ \begin{tikzpicture}[anchorbase]
	\draw[-] (0, 0) -- (0, 1);
	\token{0, 0.5}{west}{a};
	\end{tikzpicture}
	\ +\
	\mu\ \begin{tikzpicture}[anchorbase]
	\draw[-] (0, 0) -- (0, 1);
	\token{0, 0.5}{west}{b};
	\end{tikzpicture}
	\ =\
	\begin{tikzpicture}[anchorbase]
	\draw[-] (0, 0) -- (0, 1);
	\token{0, 0.5}{west}{\lambda a + \mu b};
	\end{tikzpicture} ,
	\quad 
	\begin{tikzpicture}[anchorbase]
	\draw[-] (0, 0) -- (0, 1);
	\token{0, 0.3}{east}{b};
	\token{0, 0.7}{east}{a};
	\end{tikzpicture}
	\ =\
	\begin{tikzpicture}[anchorbase]
	\draw[-] (0, 0) -- (0, 1);
	\token{0, 0.5}{east}{ab};
	\end{tikzpicture}\ ,
	\quad 
	\begin{tikzpicture}[anchorbase]
	\draw[-] (0, 0) -- (0.8, 0.8);
	\draw[-] (0.8, 0) -- (0, 0.8);
	\token{0.2, 0.2}{east}{a};
	\end{tikzpicture}
	\ =\
	\begin{tikzpicture}[anchorbase]
	\draw[-] (0, 0) -- (0.8, 0.8);
	\draw[-] (0.8, 0) -- (0, 0.8);
	\token{0.6, 0.6}{west}{a};
	\end{tikzpicture}\ ,
	\quad
	\begin{tikzpicture}[anchorbase]
	\draw[-] (0, 0) -- (0, 0.3) arc(180:0:0.2) -- (0.4, 0);
	\token{0, 0.2}{east}{a};
	\end{tikzpicture}
	\ =\
	\begin{tikzpicture}[anchorbase]
	\draw[-] (0, 0) -- (0, 0.3) arc(180:0:0.2) -- (0.4, 0);
	\token{0.4, 0.2}{west}{a^\star};
	\end{tikzpicture},
	\end{equation}
	for all $a, b \in A$ and $\lambda, \mu \in \kk$. The parity of $\begin{tikzpicture}[anchorbase]
	\draw[-] (0, 0) -- (0, 0.4);
	\token{0, 0.2}{west}{a};
	\end{tikzpicture}$ is $\bar{a}$, the morphisms $\begin{tikzpicture}[anchorbase]
	\draw[-] (0, 0) -- (0, 0.2) arc(180:0:0.2) -- (0.4, 0);
	\end{tikzpicture}$,
	$\begin{tikzpicture}[anchorbase, yscale=-1]
	\draw[-] (0, 0) -- (0, 0.2) arc(180:0:0.2) -- (0.4, 0);
	\end{tikzpicture}$, and $\begin{tikzpicture}[anchorbase]
	\draw[-] (0, 0) -- (0.4, 0.4);
	\draw[-] (0.4, 0) -- (0, 0.4);
	\end{tikzpicture}$ are even. The morphisms $\begin{tikzpicture}[anchorbase]
	\draw[-] (0, 0) -- (0, 0.4);
	\token{0, 0.2}{west}{a};
	\end{tikzpicture}$ are called \emph{(Frobenius) tokens}.
	
	For $d \in \kk$, we define $\B(A, -^\star, d)$ to be the quotient of $\B(A, -^\star)$ by the additional relations
	\begin{equation} \label{eq:specialization}
	\begin{tikzpicture}[anchorbase]
	\draw[-] (0, 0) arc(0:360:0.2);
	\token{0, 0}{west}{a};
	\end{tikzpicture}
	\ =\
	d\str_A(a)\id_{\one},
	\end{equation}
	where $a$ ranges over $A$ and $\str_A(a) := \summ_{b \in B_A} (-1)^{\bar{b}}\tr(b^\vee ba)$ is the supertrace of the action of $a$ on $A$. We call $d$ the \emph{specialization parameter}.
\end{defin}

Note that \cite[Def.~9.1]{diagrammaticsRealSupergroups} involves a parameter $\sigma \in \{0, 1\}$, corresponding to the parity of the cup and cap morphisms. The definition of $\B(A, -^\star)$ given above is the case $\sigma = 0$. See Remark \ref{rem:oddCase} for a discussion of why we do not consider the case $\sigma = 1$ in the present paper. Whenever we cite results from \cite{diagrammaticsRealSupergroups}, we always take $\sigma = 0$.

\begin{lem}[{\cite[Prop.~9.3]{diagrammaticsRealSupergroups}}] \label{lem:brauerBasics}
	The following relations hold in $\B(A, -^\star)$ for all $a \in A$:
	\begin{equation} \label{rel:brauerBasics}
	\begin{tikzpicture}[anchorbase, yscale=-1]
	\draw[-] (0, 0) -- (0, 0.2) to[out=90, in=270] (0.4, 0.6) arc(0:180:0.2) to[out=270, in=90] (0.4, 0.2) -- (0.4, 0);
	\end{tikzpicture}
	\ =\
	\begin{tikzpicture}[anchorbase, yscale=-1]
	\draw[-] (0, 0) -- (0, 0.2) arc(180:0:0.2) -- (0.4, 0);
	\end{tikzpicture}\ ,
	\quad \quad
	\begin{tikzpicture}[anchorbase, yscale=-1]
	\draw[-] (0, 0) -- (0, 0.2) to[out=90, in=270] (0.4, 0.6) arc(180:0:0.2) -- (0.8, 0);
	\draw[-] (0.4, 0) -- (0.4, 0.2) to[out=90, in=270] (0, 0.6) -- (0, 1);
	\end{tikzpicture}
	\ =\
	\begin{tikzpicture}[anchorbase, xscale=-1, yscale=-1]
	\draw[-] (0, 0) -- (0, 0.2) to[out=90, in=270] (0.4, 0.6) arc(180:0:0.2) -- (0.8, 0);
	\draw[-] (0.4, 0) -- (0.4, 0.2) to[out=90, in=270] (0, 0.6) -- (0, 1);
	\end{tikzpicture}\ ,
	\quad \quad
	\begin{tikzpicture}[anchorbase]
	\draw[-] (0, 0) -- (0.8, 0.8);
	\draw[-] (0.8, 0) -- (0, 0.8);
	\token{0.2, 0.6}{east}{a};
	\end{tikzpicture}
	\ =\
	\begin{tikzpicture}[anchorbase]
	\draw[-] (0, 0) -- (0.8, 0.8);
	\draw[-] (0.8, 0) -- (0, 0.8);
	\token{0.6, 0.2}{west}{a};
	\end{tikzpicture}\ ,
	\quad
	\begin{tikzpicture}[anchorbase, yscale=-1]
	\draw[-] (0, 0) -- (0, 0.3) arc(180:0:0.2) -- (0.4, 0);
	\token{0, 0.2}{east}{a};
	\end{tikzpicture}
	\ =\
	\begin{tikzpicture}[anchorbase, yscale=-1]
	\draw[-] (0, 0) -- (0, 0.3) arc(180:0:0.2) -- (0.4, 0);
	\token{0.4, 0.2}{west}{a^\star};
	\end{tikzpicture}.
	\end{equation}
\end{lem}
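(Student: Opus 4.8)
The plan is to derive all four identities in \eqref{rel:brauerBasics} by ``rotating'' the defining relations of $\B^\sigma(A, -^\star)$: one attaches cups and caps to the relations in \eqref{rel:brauer} and \eqref{rel:tokens} and then straightens the resulting bends using the zigzag identities (the third relation in \eqref{rel:brauer}) together with the fact that the crossing is its own inverse, $X \circ X = \id_{\go \otimes \go}$ (the first relation in \eqref{rel:brauer}). I write $X$ for the crossing generator, and I use freely that composing both sides of an already-established relation with a fixed morphism preserves equality.

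I would first dispose of the third identity of \eqref{rel:brauerBasics}, which is purely algebraic and needs no rotation. The token--crossing relation in \eqref{rel:tokens} says $X \circ (a \otimes \id_\go) = (\id_\go \otimes a) \circ X$; composing this with $X$ on both sides and using $X \circ X = \id_{\go \otimes \go}$ immediately gives $(a \otimes \id_\go) \circ X = X \circ (\id_\go \otimes a)$, which is exactly the asserted identity (a token on the ``other'' strand of the crossing slides the same way).

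Next I would treat the first identity. Bending the two inputs of a morphism $\go \otimes \go \to \one$ downward produces a morphism $\one \to \go \otimes \go$, and straightening each of the two resulting bends uses one of the two zigzag identities, exactly one of which carries the factor $(-1)^{\sigma}$. Applying this rotation to the fourth relation of \eqref{rel:brauer}, $\text{(cap)} \circ X = \text{(cap)}$: on the right-hand side the bare cap rotates to a bare cup with no net sign, while on the left-hand side the crossing is carried along, and straightening the bends there (again using the zigzag identities and $X \circ X = \id_{\go \otimes \go}$) contributes a single uncancelled factor $(-1)^{\sigma}$. Comparing the two sides yields $X \circ \text{(cup)} = (-1)^{\sigma}\,\text{(cup)}$, the first identity. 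The second and fourth identities are obtained by the identical rotation applied to the fifth (``pitchfork'') relation of \eqref{rel:brauer} and to the token--cap relation of \eqref{rel:tokens}, respectively; in each of these the crossing (resp.\ the token together with its star) sits symmetrically on both sides of the relation, so the $(-1)^{\sigma}$ contributions from the rotation cancel, leaving no extra sign. In the fourth identity the star survives because a token rides through a straightened bend without change, so the one star appearing is precisely the one already present in the token--cap relation.

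The main obstacle, as usual for these diagrammatic lemmas, is the sign bookkeeping. I would need to track with care the factors $(-1)^{\sigma}$ generated by the asymmetric zigzag identity in \eqref{rel:brauer}, as well as the signs generated by the superinterchange law \eqref{eq:superInterchangeLaw} whenever cups, caps, crossings, or tokens are slid past one another --- here it helps that the crossing is even, so moving it is sign-free, but the caps, cups, and odd tokens are not. Fixing a single convention for which zigzag orientation is used to straighten each bend, and rotating both sides of a relation in exactly the same way, is what makes the cancellations in the second and fourth identities transparent and keeps the whole computation under control.
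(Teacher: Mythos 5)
Your overall strategy — rotate the defining relations of $\B^\sigma(A,-^\star)$ by attaching cups and caps, then straighten — is the standard way to prove this lemma, and the argument you give for the third identity (precompose and postcompose the token–crossing relation with $X$ and use $X\circ X=\id$) is clean and complete. The paper itself supplies no proof; the lemma is cited directly from \cite[Prop.~9.3]{diagrammaticsRealSupergroups}, so the comparison here is purely against the correctness of your sketch.

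For the first identity, however, the sign bookkeeping as you describe it does not survive inspection. With the natural ``outer cups'' rotation $R(f)=(\id\otimes f\otimes\id)\circ(\cup\otimes\cup)$, a short calculation using the superinterchange law (to rewrite $\cup\otimes\cup = (\pm)(\id^{\otimes 2}\otimes\cup)\circ\cup$) and the two zigzag identities gives $R(\cap)=(-1)^\sigma\cup$, \emph{not} $\cup$ as you assert. If instead you use the nested rotation $R'(\cap)=(\cap\otimes\id^{\otimes 2})\circ(\id\otimes\cup\otimes\id)\circ\cup$ you do get $R'(\cap)=\cup$ with no sign, but then the rotated left-hand side $R'(\cap\circ X)$ requires sliding the crossing $X$ past one of those cups, which is precisely the second identity of the lemma (the cup-pitchfork) — i.e., the computation is circular unless you establish the second identity first. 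Symmetrically, the most direct derivation of the second identity via $(\cap\otimes\id)$-nondegeneracy invokes the first. So the dependency between identities one and two needs to be untangled explicitly: either fix a rotation and carry it out for both sides of $\cap\circ X=\cap$ in full (tracking the superinterchange signs each time cups pass each other), or derive the second identity first from the pitchfork, $X^2=\id$, and the zigzags, and then feed it into the proof of the first. Your sketch does neither, and the claim ``straightening the bends\ldots contributes a single uncancelled factor $(-1)^\sigma$'' for the left-hand side is not justified. The same imprecision affects the fourth identity, though there the star is a red herring: the net sign is what must be checked. The conclusions are all true, and the rotation picture is the right one — but the proof as written has a genuine gap in the sign analysis and in the order of derivation.
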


From this point forward, let $V$ be a right $A$-supermodule, $\varphi$ a unimodular $(\nu, -^\star)$-superhermitian form on $V$, and $\Phi = \tr \circ\,\varphi$ the corresponding nondegenerate $(\nu, -^\star)$-supersymmetric form on $V$, as in Lemma \ref{lem:hermitianToSymmetric}. Fix a homogeneous $\kk$-basis $B_V$ of $V$, and let $B_V^\vee = \{b^\vee \mid b \in B_V\}$ be the left dual basis with respect to $\Phi$. Set $\g = \osp(\varphi)$, and fix a homogeneous $\kk$-basis $B_\g$ for $\g$. 

\begin{prop}[{\cite[Thm.~10.1]{diagrammaticsRealSupergroups}}] \label{prop:incarnation}
	There exists a unique monoidal superfunctor, called the \emph{incarnation superfunctor associated to $\Phi$}, denoted $F_\Phi \colon \B(A, -^\star) \to \g\dashsmod$, such that \mbox{$F_\Phi(\go) = V$},
	\begin{align*}
	&F_\Phi\left(\begin{tikzpicture}[anchorbase]
	\draw[-] (0, 0) -- (0.4, 0.4);
	\draw[-] (0.4, 0) -- (0, 0.4);
	\end{tikzpicture} \right) \colon V \otimes V \to V \otimes V, & v \otimes w &\mapsto \nu(-1)^{\bar{v}\bar{w}}w \otimes v,\\
	&F_\Phi\left(\begin{tikzpicture}[anchorbase]
	\draw[-] (0, 0) -- (0, 0.2) arc(180:0:0.2) -- (0.4, 0);
	\end{tikzpicture}\right) \colon V \otimes V \to \kk, & v \otimes w &\mapsto \Phi(v, w),\\
	&F_\Phi\left(\begin{tikzpicture}[anchorbase]
	\draw[-] (0, 0) -- (0, 0.4);
	\token{0, 0.2}{west}{a};
	\end{tikzpicture}\right) \colon V \to V, & v &\mapsto (-1)^{\bar{a}\bar{v}}va^\star.
	\end{align*}
	This superfunctor also satisfies
	\begin{align*}
	F_\Phi\left(\begin{tikzpicture}[anchorbase, yscale=-1]
	\draw[-] (0, 0) -- (0, 0.2) arc(180:0:0.2) -- (0.4, 0);
	\end{tikzpicture}\right) \colon \kk \to V \otimes V, \quad \quad 1 &\mapsto \summ_{v \in B_V} v \otimes v^\vee,
	\end{align*}
	and
	$$F_\Phi\left(\begin{tikzpicture}[anchorbase]
	\draw[-] (0, 0) arc(0:360:0.2);
	\token{0, 0}{west}{a};
	\end{tikzpicture}\right) = \str_V(a) \id_{\one} $$
	for all $a \in A$. Hence in the case $V = A^{m \mid n}$ for some $m, n \in \N$, $F_\Phi$ is also well-defined as a monoidal superfunctor from $\B(A, -^\star, \nu(m - n))$ to $\g\dashsmod$.
\end{prop}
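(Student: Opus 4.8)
\emph{Strategy and uniqueness.} By Definition~\ref{def:frobeniusBrauerCategory}, $\B^\sigma(A,-^\star)$ is presented by the generating object $\go$, the generating morphisms (crossing, cap, cup, tokens), and the relations \eqref{rel:brauer} and \eqref{rel:tokens}; consequently a monoidal $\kk$-linear superfunctor out of it is precisely the datum of an object and morphisms in the target filling the roles of the generators and satisfying those relations, and is unique once its values on the generators are fixed. The statement fixes the values on $\go$ and on the crossing, cap, and tokens, and the snake relations in \eqref{rel:brauer} — together with the fact, which one checks, that the image of the cap is a nondegenerate pairing — then force the value on the cup. This is why the cup formula is phrased as something $F_\Phi$ ``also satisfies'' rather than as part of the defining data. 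For existence I would set $F_\Phi(\go)=V$ with its defining $\osp(\varphi)$-action, take the crossing, cap, and tokens as displayed, take $F_\Phi$ of the cup to be $\sum_{v\in B_V}(-1)^{\sigma\bar v}v\otimes v^\vee$, extend $\kk$-linearly in the token variable, and then verify that (i) these are morphisms of $\osp(\varphi)$-supermodules of the declared parities, and (ii) they satisfy \eqref{rel:brauer} and \eqref{rel:tokens}.

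\emph{Well-definedness of the images.} For (i): the signed flip $v\otimes w\mapsto\nu(-1)^{\bar v\bar w}w\otimes v$ is even and is a morphism of $\osp(\varphi)$-supermodules for purely formal reasons; the pairing $\Phi=\tr\circ\,\varphi$ has parity $\sigma$ and, by the very characterization of $\osp(\varphi)$ as those $X$ with $\varphi(Xv,w)=-(-1)^{\bar X\bar v}\varphi(v,Xw)$ (hence the same identity for $\Phi$), is $\osp(\varphi)$-invariant; the token image $v\mapsto(-1)^{\bar a\bar v}va^\star$ has parity $\bar a$ and is $\osp(\varphi)$-equivariant because right multiplication commutes with every right-$A$-linear endomorphism of $V$, even though it is not itself right-$A$-linear; and the coevaluation is a morphism because $\sum_v(-1)^{\sigma\bar v}v\otimes v^\vee$ is $\osp(\varphi)$-invariant, dually to the invariance of $\Phi$.

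\emph{Checking the relations.} For (ii), the relations in \eqref{rel:tokens} are elementary: $(1_A)^\star=1_A$ and $\kk$-linearity of $-^\star$ handle the first two; the composition-of-tokens relation follows from $(ab)^\star=(-1)^{\bar a\bar b}b^\star a^\star$ once the Koszul signs of \eqref{eq:superInterchangeLaw} are accounted for; sliding a token past the crossing is naturality of the (symmetric) braiding with respect to the token morphism; and sliding a token past a cap is exactly the second identity of \eqref{def:superSymmetric} with $a$ replaced by $a^\star$ (using $a^{\star\star}=a$). In \eqref{rel:brauer}, the first two relations say the signed flip squares to the identity and satisfies the braid relation, both automatic for a symmetric braiding; the relation absorbing a crossing into a cap, and the remaining cap/cup compatibilities, reduce to the supersymmetry $\Phi(v,w)=\nu(-1)^{\bar v\bar w}\Phi(w,v)$ together with the dual-basis identities; and the snake relations, one of which carries the sign $(-1)^\sigma$, are verified by substituting $\sum_v(-1)^{\sigma\bar v}v\otimes v^\vee$ for the cup and using the left-dual-basis identity $\Phi(v^\vee,u)=\delta_{vu}$, with \eqref{eq:doubleDual} needed for the mirrored snake. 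Finally, computing $F_\Phi$ of the bubble — the cap composed with an $a$-labelled token on one strand composed with the cup — and simplifying with the supersymmetry relations and the reconstruction $x=\sum_v\Phi(v^\vee,x)v$ produces a scalar; this scalar is the supertrace $\str_V(a)$ of the action of $a$ on $V$, where one uses $\tr(x^\star)=\tr(x)$ from \eqref{eq:traceCompatibility} to absorb the $\star$ contributed by the token.

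\emph{Specialization, and the main difficulty.} For the last claim, when $V=A^{m\mid n}$ it is, as a right $A$-supermodule, a direct sum of $m$ copies of $A$ and $n$ copies of $A$ with reversed parity, on each of which right multiplication by $a$ acts as on $A$; additivity of the supertrace and the sign contributed by the parity-reversed summands give $\str_V(a)=\nu(m-n)\str_A(a)$, the factor $\nu$ coming from the symmetry type of $\varphi$. Hence $F_\Phi$ sends the defining relation \eqref{eq:specialization} of $\B^\sigma(A,-^\star,\nu(m-n))$ to a valid identity and descends to that quotient. The one genuinely laborious part of this argument is the sign bookkeeping in (ii): tokens have parity $\bar a$, the cap, cup, and crossing carry the signs built into $F_\Phi$, so each time a token crosses a strand, a crossing, or a cup one must invoke \eqref{eq:superInterchangeLaw} and reconcile the Koszul sign with $\nu$ and with the parity $\sigma$ of $\Phi$. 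The snake relations are where this is sharpest, since there $(-1)^\sigma$, the $(-1)^{\sigma\bar v}$ in the coevaluation, and the possible oddness of $\Phi$ all enter simultaneously.
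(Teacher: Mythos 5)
The paper does not actually prove this proposition: the bracketed attribution in the theorem header is a citation, and Proposition~\ref{prop:incarnation} is exactly \cite[Thm.~10.1]{diagrammaticsRealSupergroups}, which the paper imports without proof. So there is no in-paper argument to compare against; your blind attempt can only be judged on its own merits. Your overall strategy — reading off the presentation of $\B^\sigma(A,-^\star)$ from Definition~\ref{def:frobeniusBrauerCategory}, observing that a monoidal superfunctor out of it is the same as a choice of images for the generators satisfying \eqref{rel:brauer} and \eqref{rel:tokens}, checking those relations, noting the cup is forced by the snake identities, and then computing the bubble — is exactly the right approach, and the sketch is essentially sound, including the places you correctly identify as where $\sigma$, $\nu$, and the Koszul signs interact.

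There is, however, a real confusion in the final paragraph. You assert $\str_V(a)=\nu(m-n)\str_A(a)$ and then in the same sentence give two incompatible justifications: additivity of the supertrace together with the parity-reversal sign on the $n$ odd summands, and a factor of $\nu$ ``coming from the symmetry type of $\varphi$.'' The first mechanism, on its own, yields $(m-n)\str_A(a)$ and nothing more: the supertrace of right multiplication on $A^{m\mid n}$ is a property of the $A$-supermodule structure alone and cannot see $\varphi$. If $\nu$ is to appear — as it must for the specialization to $\B^\sigma(A,-^\star,\nu(m-n))$ to go through — it has to enter through the actual evaluation of $F_\Phi$ on the labelled bubble, where at some point the supersymmetry identity $\Phi(v,w)=\nu(-1)^{\bar v\bar w}\Phi(w,v)$ from \eqref{def:superSymmetric} is invoked to reorder the arguments of $\Phi$ against the coevaluation $\sum_v(-1)^{\sigma\bar v}v\otimes v^\vee$; alternatively, the symbol $\str_V(a)$ in the statement is being used to mean ``whatever scalar the bubble produces,'' which is not the naive supertrace. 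Either way, the claim needs an explicit bubble computation rather than an appeal to additivity, and in that computation you should track exactly one factor of $\nu$ and verify it matches the displayed specialization parameter. This is a localized gap and does not undermine the rest of your proof sketch, but as written this step does not establish what it claims.
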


Taking $A = \kk$ in Proposition \ref{prop:incarnation} and restricting our attention to endomorphism algebras, we recover the classic actions of Brauer algebras on modules for orthogonal and symplectic groups (phrased here in terms of the associated Lie algebras). Such actions were first studied by Brauer in \cite{brauerAlgebras}.

In the rest of the paper, we specialize to the case $V = A^{m \mid n}$ for some $m, n \in \N$.

\subsection{Affine Category and Superfunctor} \label{ss:affine}

The definition of affine Frobenius Brauer categories involves \emph{teleporter} morphisms in $\B(A, -^\star)$. The \emph{ordinary teleporter} morphism is defined as follows:
\begin{equation}
\begin{tikzpicture}[anchorbase]
\draw[-] (0, 0) -- (0, 0.8);
\draw[-] (0.6, 0) -- (0.6, 0.8);
\teleportUU{0, 0.6}{0.6, 0.2};
\end{tikzpicture}
\ :=\
\summ_{b \in B_A} \begin{tikzpicture}[anchorbase]
\draw[-] (0, 0) -- (0, 0.8);
\draw[-] (0.6, 0) -- (0.6, 0.8);
\token{0, 0.6}{east}{b};
\token{0.6, 0.2}{west}{b^\vee};
\end{tikzpicture}.
\end{equation}
It is straightforward to show that this definition is independent of the choice of basis $B_A$. The definition does depend on the choice of trace map for $A$, but as we will discuss after Definition \ref{def:affineCategory}, different choices of trace map yield isomorphic affine Frobenius Brauer categories.

One can also draw teleporters with the right endpoint above the left one; the definition is slightly altered such that the dual element appears on the left, i.e.\
\begin{equation}
\begin{tikzpicture}[anchorbase]
\draw[-] (0, 0) -- (0, 0.8);
\draw[-] (0.6, 0) -- (0.6, 0.8);
\teleportUU{0, 0.2}{0.6, 0.6};
\end{tikzpicture}
\ :=\
\summ_{b \in B_A} \begin{tikzpicture}[anchorbase]
\draw[-] (0, 0) -- (0, 0.8);
\draw[-] (0.6, 0) -- (0.6, 0.8);
\token{0, 0.2}{east}{b^\vee};
\token{0.6, 0.6}{west}{b};
\end{tikzpicture}.
\end{equation}
This modification ensures that teleporter endpoints slide up and down freely:
$$\begin{tikzpicture}[anchorbase]
\draw[-] (0, 0) -- (0, 0.8);
\draw[-] (0.6, 0) -- (0.6, 0.8);
\teleportUU{0, 0.6}{0.6, 0.2};
\end{tikzpicture}
=
\summ_{b \in B_A} \begin{tikzpicture}[anchorbase]
\draw[-] (0, 0) -- (0, 0.8);
\draw[-] (0.6, 0) -- (0.6, 0.8);
\token{0, 0.6}{east}{b};
\token{0.6, 0.2}{west}{b^\vee};
\end{tikzpicture} 
\overset{\eqref{eq:superInterchangeLaw}}{=}
\summ_{b \in B_A} (-1)^{\bar{b}} \begin{tikzpicture}[anchorbase]
\draw[-] (0, 0) -- (0, 0.8);
\draw[-] (0.6, 0) -- (0.6, 0.8);
\token{0, 0.2}{east}{b};
\token{0.6, 0.6}{west}{b^\vee};
\end{tikzpicture}
\overset{\eqref{eq:doubleDual}}{=}
\summ_{b \in B_A} \begin{tikzpicture}[anchorbase]
\draw[-] (0, 0) -- (0, 0.8);
\draw[-] (0.6, 0) -- (0.6, 0.8);
\token{0, 0.2}{east}{b^\vee};
\token{0.6, 0.6}{west}{b};
\end{tikzpicture}
=
\begin{tikzpicture}[anchorbase]
\draw[-] (0, 0) -- (0, 0.8);
\draw[-] (0.6, 0) -- (0.6, 0.8);
\teleportUU{0, 0.2}{0.6, 0.6};
\end{tikzpicture},$$
where in the second-to-last equality we switched to a sum over the left dual basis. In light of this sliding identity, we also allow teleporters to be drawn with both endpoints at the same height:
$$\begin{tikzpicture}[anchorbase]
\draw[-] (0, 0) -- (0, 0.8);
\draw[-] (0.6, 0) -- (0.6, 0.8);
\teleportUU{0, 0.4}{0.6, 0.4};
\end{tikzpicture}
\ :=\
\summ_{b \in B_A} \begin{tikzpicture}[anchorbase]
\draw[-] (0, 0) -- (0, 0.8);
\draw[-] (0.6, 0) -- (0.6, 0.8);
\token{0, 0.4}{east}{b};
\token{0.6, 0.4}{west}{b^\vee};
\end{tikzpicture}
\ \overset{\eqref{eq:superInterchangeLaw}}{=}\
\summ_{b \in B_A} \begin{tikzpicture}[anchorbase]
\draw[-] (0, 0) -- (0, 0.8);
\draw[-] (0.6, 0) -- (0.6, 0.8);
\token{0, 0.6}{east}{b};
\token{0.6, 0.2}{west}{b^\vee};
\end{tikzpicture}
\ =\
\begin{tikzpicture}[anchorbase]
\draw[-] (0, 0) -- (0, 0.8);
\draw[-] (0.6, 0) -- (0.6, 0.8);
\teleportUU{0, 0.6}{0.6, 0.2};
\end{tikzpicture}
\ =\
\begin{tikzpicture}[anchorbase]
\draw[-] (0, 0) -- (0, 0.8);
\draw[-] (0.6, 0) -- (0.6, 0.8);
\teleportUU{0, 0.2}{0.6, 0.6};
\end{tikzpicture}.$$
Further, one can draw teleporters with one or both endpoints pointing downwards. The corresponding definitions are similar to ordinary teleporters, but with $-^\star$ applied to the tokens corresponding to the endpoint(s) that are facing downwards. For example,
\begin{align*}
&\begin{tikzpicture}[anchorbase]
\draw[-] (0, 0) -- (0, 0.8);
\draw[-] (0.6, 0) -- (0.6, 0.8);
\teleportUD{0, 0.4}{0.6, 0.4};
\end{tikzpicture}
\ :=\ 
\summ_{b \in B_A}  \begin{tikzpicture}[anchorbase]
\draw[-] (0, 0) -- (0, 0.8);
\draw[-] (0.6, 0) -- (0.6, 0.8);
\token{0, 0.4}{east}{b};
\token{0.6, 0.4}{west}{(b^\vee)^\star};
\end{tikzpicture},
\\
&\begin{tikzpicture}[anchorbase]
\draw[-] (0, 0) -- (0, 0.8);
\draw[-] (0.6, 0) -- (0.6, 0.8);
\teleportDD{0, 0.2}{0.6, 0.6};
\end{tikzpicture}
\ :=\
\summ_{b \in B_A} \begin{tikzpicture}[anchorbase]
\draw[-] (0, 0) -- (0, 0.8);
\draw[-] (0.6, 0) -- (0.6, 0.8);
\token{0, 0.2}{east}{(b^\vee)^\star};
\token{0.6, 0.6}{west}{b^\star};
\end{tikzpicture}
\ \overset{}{=}\
\summ_{b \in B_A} \begin{tikzpicture}[anchorbase]
\draw[-] (0, 0) -- (0, 0.8);
\draw[-] (0.6, 0) -- (0.6, 0.8);
\token{0, 0.2}{east}{b^\vee};
\token{0.6, 0.6}{west}{b};
\end{tikzpicture}
\ =\
\begin{tikzpicture}[anchorbase]
\draw[-] (0, 0) -- (0, 0.8);
\draw[-] (0.6, 0) -- (0.6, 0.8);
\teleportUU{0, 0.2}{0.6, 0.6};
\end{tikzpicture},
\end{align*}
where in the second-to-last equality we switched to a sum over the involuted basis $\{b^\star \mid b \in B_A\}$. In general, this kind of calculation shows that one can flip the orientation of both endpoints of a teleporter simultaneously without changing the morphism, i.e.\ only the relative orientation of the endpoints matters. Teleporters whose endpoints have opposite orientations are called \emph{reflecting teleporters}. The names of the teleporter morphisms are inspired by the identities outlined in Proposition \ref{prop:teleporters}, in which Frobenius tokens ``teleport'' from one string to another.

One can also draw teleporters in larger diagrams. When doing so, one should include the sign $(-1)^{\bar{b}x}$ in the sum defining the teleporter, where $x$ is the sum of the parities of all morphisms in the diagram appearing vertically between the two teleporter endpoints. For instance,
$$\begin{tikzpicture}[anchorbase]
\draw[-] (0, 0) -- (0, 1.2);
\draw[-] (0.6, 0) -- (0.6, 1.2);
\draw[-] (1.2, 0) -- (1.2, 1.2);
\draw[-] (1.8, 0) -- (1.8, 1.2);
\draw[-] (2.4, 0) -- (2.4, 1.2);
\token{0.6, 0.3}{west}{c};
\token{1.2, 0.5}{west}{d};
\token{2.4, 0.7}{west}{e};
\teleportUU{0, 0.15}{1.8, 1.05};
\end{tikzpicture} = \summ_{b \in B_A} (-1)^{\bar{b}(\bar{c} + \bar{d} + \bar{e})}\begin{tikzpicture}[anchorbase]
\draw[-] (0, 0) -- (0, 1.2);
\draw[-] (0.6, 0) -- (0.6, 1.2);
\draw[-] (1.2, 0) -- (1.2, 1.2);
\draw[-] (1.8, 0) -- (1.8, 1.2);
\draw[-] (2.4, 0) -- (2.4, 1.2);
\token{0.6, 0.3}{west}{c};
\token{1.2, 0.5}{west}{d};
\token{2.4, 0.7}{west}{e};
\token{0, 0.15}{east}{b^\vee};
\token{1.8, 1.05}{west}{b};
\end{tikzpicture}.$$
This convention ensures that one can freely slide the endpoints of teleporters along strings; the signs arising from \eqref{eq:superInterchangeLaw} do not need to be actively tracked since they are incorporated into the definition of the teleporters.
For instance, we have:
$$
\begin{tikzpicture}[anchorbase]
\draw[-] (0, 0) -- (0, 1.2);
\draw[-] (0.6, 0) -- (0.6, 1.2);
\draw[-] (1.2, 0) -- (1.2, 1.2);
\draw[-] (1.8, 0) -- (1.8, 1.2);
\draw[-] (2.4, 0) -- (2.4, 1.2);
\token{0.6, 0.3}{west}{c};
\token{1.2, 0.5}{west}{d};
\token{2.4, 0.7}{west}{e};
\teleportUU{0, 0.15}{1.8, 1.05};
\end{tikzpicture}
=
\begin{tikzpicture}[anchorbase]
\draw[-] (0, 0) -- (0, 1.2);
\draw[-] (0.6, 0) -- (0.6, 1.2);
\draw[-] (1.2, 0) -- (1.2, 1.2);
\draw[-] (1.8, 0) -- (1.8, 1.2);
\draw[-] (2.4, 0) -- (2.4, 1.2);
\token{0.6, 0.3}{west}{c};
\token{1.2, 0.5}{west}{d};
\token{2.4, 0.7}{west}{e};
\teleportUU{0, 1.05}{1.8, 1.05};
\end{tikzpicture}
=
\begin{tikzpicture}[anchorbase]
\draw[-] (0, 0) -- (0, 1.2);
\draw[-] (0.6, 0) -- (0.6, 1.2);
\draw[-] (1.2, 0) -- (1.2, 1.2);
\draw[-] (1.8, 0) -- (1.8, 1.2);
\draw[-] (2.4, 0) -- (2.4, 1.2);
\token{0.6, 0.3}{west}{c};
\token{1.2, 0.5}{west}{d};
\token{2.4, 0.7}{west}{e};
\teleportUU{0, 0.15}{1.8, 0.15};
\end{tikzpicture}
=
\begin{tikzpicture}[anchorbase]
\draw[-] (0, 0) -- (0, 1.2);
\draw[-] (0.6, 0) -- (0.6, 1.2);
\draw[-] (1.2, 0) -- (1.2, 1.2);
\draw[-] (1.8, 0) -- (1.8, 1.2);
\draw[-] (2.4, 0) -- (2.4, 1.2);
\token{0.6, 0.3}{west}{c};
\token{1.2, 0.6}{west}{d};
\token{2.4, 0.8}{west}{e};
\teleportUU{0, 1.05}{1.8, 0.15};
\end{tikzpicture}.
$$

\begin{prop} \label{prop:teleporters}
	Teleporter morphisms satisfy the following relations, for all $a \in A$:
	\begin{equation} \label{rel:teleporting}
	\begin{tikzpicture}[anchorbase]
	\draw[-] (0, 0) -- (0, 0.8);
	\draw[-] (0.6, 0) -- (0.6, 0.8);
	\teleportUU{0, 0.4}{0.6, 0.4};
	\token{0, 0.6}{east}{a};
	\end{tikzpicture}
	\ =\
	\begin{tikzpicture}[anchorbase]
	\draw[-] (0, 0) -- (0, 0.8);
	\draw[-] (0.6, 0) -- (0.6, 0.8);
	\teleportUU{0, 0.4}{0.6, 0.4};
	\token{0.6, 0.2}{west}{a};
	\end{tikzpicture},
	\quad \quad
	\begin{tikzpicture}[anchorbase]
	\draw[-] (0, 0) -- (0, 0.8);
	\draw[-] (0.6, 0) -- (0.6, 0.8);
	\teleportUU{0, 0.4}{0.6, 0.4};
	\token{0.6, 0.6}{west}{a};
	\end{tikzpicture}
	\ =\
	\begin{tikzpicture}[anchorbase]
	\draw[-] (0, 0) -- (0, 0.8);
	\draw[-] (0.6, 0) -- (0.6, 0.8);
	\teleportUU{0, 0.4}{0.6, 0.4};
	\token{0, 0.2}{east}{a};
	\end{tikzpicture}\ ,
	\end{equation}
	\begin{equation}
	\begin{tikzpicture}[anchorbase]
	\draw[-] (0, 0) -- (0, 0.8);
	\draw[-] (0.6, 0) -- (0.6, 0.8);
	\teleportUD{0, 0.4}{0.6, 0.4};
	\token{0, 0.6}{east}{a};
	\end{tikzpicture}
	\ =\
	\begin{tikzpicture}[anchorbase]
	\draw[-] (0, 0) -- (0, 0.8);
	\draw[-] (0.6, 0) -- (0.6, 0.8);
	\teleportUD{0, 0.4}{0.6, 0.4};
	\token{0.6, 0.6}{west}{a^\star};
	\end{tikzpicture},
	\quad \quad
	\begin{tikzpicture}[anchorbase]
	\draw[-] (0, 0) -- (0, 0.8);
	\draw[-] (0.6, 0) -- (0.6, 0.8);
	\teleportUD{0, 0.4}{0.6, 0.4};
	\token{0.6, 0.6}{west}{a};
	\end{tikzpicture}
	\ =\
	\begin{tikzpicture}[anchorbase]
	\draw[-] (0, 0) -- (0, 0.8);
	\draw[-] (0.6, 0) -- (0.6, 0.8);
	\teleportUD{0, 0.4}{0.6, 0.4};
	\token{0, 0.6}{east}{a^\star};
	\end{tikzpicture}\ .
	\end{equation}
	(Note that tokens travel from one vertical side of an ordinary teleporter to the other, but stay on the same vertical side after travelling through a reflecting teleporter.)
\end{prop}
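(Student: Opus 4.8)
The plan is to expand every picture in the statement into a finite sum over the basis $B_A$, so that each claimed equality of morphisms of $\B^\sigma(A,-^\star)$ becomes an identity in $A\otimes A$, which is then verified using the dual-basis formulas \eqref{eq:traceDecomposition}, \eqref{eq:doubleDual}, \eqref{eq:dualStar} together with \eqref{eq:symmetric} and the defining properties of $-^\star$. This reduction is reversible: since Frobenius tokens are linear in their labels by \eqref{rel:tokens} and $\otimes$ of morphisms is bilinear, a true identity $\summ_i x_i\otimes y_i=\summ_j x'_j\otimes y'_j$ in $A\otimes A$ yields the corresponding identity of morphisms once each $x\otimes y$ is interpreted as a token labelled $x$ on the left strand and a token labelled $y$ on the right strand.

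The algebraic core is the ``token absorption'' identity
\begin{equation*}
\summ_{b\in B_A} ab\otimes b^\vee=\summ_{b\in B_A} b\otimes b^\vee a \qquad (a\in A),
\end{equation*}
along with its siblings obtained (i) by sliding $a$ to the other side of $b$, which introduces a Koszul sign (so, e.g.\ $\summ_b ba\otimes b^\vee=(-1)^{\bar a}\summ_b b\otimes ab^\vee$), and (ii) by applying $\id\otimes(-^\star)$ and rewriting $(b^\vee a)^\star=(-1)^{\bar a\bar b}a^\star(b^\vee)^\star$ via the anti-multiplicativity of $-^\star$, which produces the identities in which $b^\vee$ has been replaced by $(b^\vee)^\star$. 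Each of these follows by substituting one of the decompositions $x=\summ_b\tr(xb)b^\vee=\summ_b\tr(b^\vee x)b$ of \eqref{eq:traceDecomposition} into one tensor factor, using \eqref{eq:symmetric} to cycle the trace, and using that $\tr$ is even --- so $\tr(xy)\neq0$ forces $\bar x=\bar y$ --- which is exactly what lets the residual $b$-dependent sign be pulled out of the sum.

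With this in hand I would read off the two sides of each of the four relations. In every picture the two legs of the teleporter are drawn at the same height and the extra token sits immediately above or below one of them on that leg's strand, so nothing is contributed by the in-diagram teleporter sign convention; the only signs are the Koszul signs of \eqref{eq:superInterchangeLaw} incurred when the added token is composed past the teleporter token on the opposite strand, plus the single application of $-^\star$ to a down-pointing leg in the reflecting case. Unpacking these, the ordinary-teleporter relation ``token $a$ just above the left leg $=$ token $a$ just below the right leg'' is literally the absorption identity; ``token $a$ just above the right leg $=$ token $a$ just below the left leg'' is the absorption identity with the Koszul sign $(-1)^{\bar a\bar b}$ distributed through both sides; and, for the reflecting teleporter, ``token $a$ just above the left leg $=$ token $a^\star$ just above the right leg'' becomes $\summ_b ab\otimes(b^\vee)^\star=\summ_b(-1)^{\bar a\bar b}\,b\otimes a^\star(b^\vee)^\star$, where the sign $(-1)^{\bar a\bar b}$ on the right is precisely the superinterchange sign produced when $\mathrm{token}(a^\star)$ slides past $\mathrm{token}(b)$, and it cancels the sign in the starred absorption identity, leaving no sign in the stated relation. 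The fourth relation is obtained by an entirely analogous computation.

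The only real obstacle is the sign bookkeeping: three independent sources of signs --- the anti-multiplicativity of $-^\star$, the superinterchange law, and the in-diagram sign convention built into the definition of teleporters --- have to be tracked at once, and the cancellations in the reflecting case only go through after invoking the evenness of $\tr$ to match the superinterchange sign with the one coming out of the dual-basis manipulation. Apart from that, the proof is a routine calculation.
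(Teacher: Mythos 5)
Your proof is correct and takes essentially the same approach as the paper's: both reduce each relation to a dual-basis identity in $A\otimes A$ proved via the decomposition \eqref{eq:traceDecomposition} and symmetry of the trace, with your ``token absorption'' identity $\summ_b ab\otimes b^\vee=\summ_b b\otimes b^\vee a$ being exactly the chain of equalities the paper carries out diagrammatically for the first relation. The paper proves only that first case explicitly and asserts the others are similar, while you work out the sign bookkeeping for all four; your use of the evenness of $\tr$ to extract the residual $b$-dependent sign from the sum is the correct justification for the ``sibling'' identities and matches what the paper leaves implicit.
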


\begin{proof}
	We will prove the first identity; the others are similar.
	\begin{multline*}
	\begin{tikzpicture}[anchorbase]
	\draw[-] (0, 0) -- (0, 0.8);
	\draw[-] (0.6, 0) -- (0.6, 0.8);
	\teleportUU{0, 0.4}{0.6, 0.4};
	\token{0, 0.6}{east}{a};
	\end{tikzpicture}
	\overset{\eqref{rel:tokens}}{=} 
	\summ_{b \in B_A} 
	\begin{tikzpicture}[anchorbase]
	\draw[-] (0, 0) -- (0, 0.8);
	\draw[-] (0.6, 0) -- (0.6, 0.8);
	\token{0, 0.6}{east}{ab};
	\token{0.6, 0.2}{west}{b^\vee};
	\end{tikzpicture}
	\underset{\eqref{eq:doubleDual}}{\overset{\eqref{eq:traceDecomposition}}{=}}
	\summ_{b \in B_A} \summ_{c \in B_A} \begin{tikzpicture}[anchorbase]
	\draw[-] (0, 0) -- (0, 0.8);
	\draw[-] (0.6, 0) -- (0.6, 0.8);
	\token{0, 0.6}{east}{\tr(c^\vee ab)c};
	\token{0.6, 0.2}{west}{b^\vee};
	\end{tikzpicture}
	\\
	\overset{\eqref{rel:tokens}}{=}
	\summ_{c \in B_A} \summ_{b \in B_A}  \begin{tikzpicture}[anchorbase]
	\draw[-] (0, 0) -- (0, 0.8);
	\draw[-] (0.6, 0) -- (0.6, 0.8);
	\token{0, 0.6}{east}{c};
	\token{0.6, 0.2}{west}{\tr(c^\vee a b)b^\vee};
	\end{tikzpicture}
	\overset{\eqref{eq:traceDecomposition}}{=}
	\summ_{c \in B_A}  \begin{tikzpicture}[anchorbase]
	\draw[-] (0, 0) -- (0, 0.8);
	\draw[-] (0.6, 0) -- (0.6, 0.8);
	\token{0, 0.6}{east}{c};
	\token{0.6, 0.2}{west}{c^\vee a};
	\end{tikzpicture}
	\overset{\eqref{rel:tokens}}{=}
	\begin{tikzpicture}[anchorbase]
	\draw[-] (0, 0) -- (0, 0.8);
	\draw[-] (0.6, 0) -- (0.6, 0.8);
	\teleportUU{0, 0.4}{0.6, 0.4};
	\token{0.6, 0.2}{east}{a};
	\end{tikzpicture}\ . \qedhere
	\end{multline*}
\end{proof}

\begin{lem} \label{lem:teleporterCrossingSlide}
	Teleporters of both types slide through crossings:
	\begin{equation} \label{rel:teleporterCrossingSlide}
	\begin{tikzpicture}[anchorbase]
	\draw[-] (0, 0) -- (0.8, 0.8);
	\draw[-] (0.8, 0) -- (0, 0.8);
	\teleportUU{0.2, 0.2}{0.6, 0.2};
	\end{tikzpicture}
	\ =\
	\begin{tikzpicture}[anchorbase]
	\draw[-] (0, 0) -- (0.8, 0.8);
	\draw[-] (0.8, 0) -- (0, 0.8);
	\teleportUU{0.2, 0.6}{0.6, 0.6};
	\end{tikzpicture},
	\quad \quad
	\begin{tikzpicture}[anchorbase]
	\draw[-] (0, 0) -- (0.8, 0.8);
	\draw[-] (0.8, 0) -- (0, 0.8);
	\teleportUD{0.2, 0.2}{0.6, 0.2};
	\end{tikzpicture}
	\ =\
	\begin{tikzpicture}[anchorbase]
	\draw[-] (0, 0) -- (0.8, 0.8);
	\draw[-] (0.8, 0) -- (0, 0.8);
	\teleportDU{0.2, 0.6}{0.6, 0.6};
	\end{tikzpicture}.
	\end{equation}
\end{lem}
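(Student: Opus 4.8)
\emph{Proof plan.} The plan is to expand both teleporters as sums of Frobenius tokens, using their definitions, and then to move the two tokens of each summand along their strands through the crossing.

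It is convenient to restate the first relation. Let $\tau := \summ_{b \in B_A} (\text{token } b) \otimes (\text{token } b^\vee)$, regarded as an endomorphism of $\go \otimes \go$ in $\B^\sigma(A, -^\star)$; then the first relation asserts exactly that $\tau$ commutes with the crossing, and since the crossing squares to the identity this is equivalent to the statement that conjugation by the crossing fixes $\tau$. To compute this conjugate I would slide the token $b$ along the strand on which it sits, through the crossing, using the relation in \eqref{rel:tokens} that moves a token along one strand of a crossing, and slide the token $b^\vee$ along the other strand using the corresponding relation in \eqref{rel:brauerBasics}. Because the crossing interchanges the two strands, these two moves swap the two tokens; and since $\overline{b} = \overline{b^\vee}$, the superinterchange law \eqref{eq:superInterchangeLaw} contributes exactly one sign $(-1)^{\bar b}$. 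Thus the conjugate of $\tau$ by the crossing is $\summ_{b \in B_A} (-1)^{\bar b}(\text{token } b^\vee) \otimes (\text{token } b)$. Finally, re-indexing this sum over the left dual basis $B_A^\vee$ and applying the double-dual identity \eqref{eq:doubleDual} turns $(-1)^{\bar b}$ into $1$ and returns $\tau$; this is precisely the manipulation appearing in the displayed computation in the discussion preceding Proposition~\ref{prop:teleporters} that establishes that teleporter endpoints slide freely up and down strands. Diagrammatically, one simply slides both endpoints of the teleporter up past the crossing, the superinterchange sign being the one already absorbed into the teleporter conventions.

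For the second relation the argument is identical, with a $\star$ attached to the downward-oriented endpoint throughout: a starred token $(b^\vee)^\star$ slides along a strand through a crossing via \eqref{rel:tokens} and \eqref{rel:brauerBasics} just as $b^\vee$ does, so conjugating $\summ_{b}(\text{token } b) \otimes (\text{token } (b^\vee)^\star)$ by the crossing produces $\summ_{b}(-1)^{\bar b}(\text{token } (b^\vee)^\star) \otimes (\text{token } b)$. Re-indexing this over $B_A^\vee$ and $B_A^\star$, now using \eqref{eq:dualStar} in addition to \eqref{eq:doubleDual} and the fact that $-^\star$ is an involution, rewrites it as $\summ_{b}(\text{token } b^\star) \otimes (\text{token } b^\vee)$, which is the reflecting teleporter on the right-hand side of the second relation. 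The up/down labels on the two endpoints appear interchanged between the two sides precisely because the crossing swaps the two strands, which is consistent with the fact, noted in the discussion preceding Proposition~\ref{prop:teleporters}, that flipping the orientations of both endpoints of a teleporter simultaneously does not change the morphism.

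The only step requiring care beyond unwinding definitions is the sign bookkeeping: one must check that the superinterchange sign picked up when the two tokens cross each other is exactly cancelled by the $(-1)^{\bar b}$ produced in re-indexing via \eqref{eq:doubleDual} (and that this remains compatible with \eqref{eq:dualStar} in the reflecting case). Given the teleporter conventions set up in this subsection, this cancellation is automatic, and both relations then reduce to a short application of the token-slide relations.
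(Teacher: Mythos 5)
Your proposal is correct and proceeds by essentially the same argument as the paper: expand the teleporter as a sum of token pairs, slide both tokens through the crossing using the token–crossing relations from \eqref{rel:tokens} and \eqref{rel:brauerBasics}, absorb the superinterchange sign $(-1)^{\bar b}$ arising from the swap, and re-index over the dual basis via \eqref{eq:doubleDual} (together with \eqref{eq:dualStar} in the reflecting case). The "conjugation by the crossing" framing is logically equivalent but superfluous — the paper just slides endpoints up through a single crossing — and the extra re-indexing over $B_A^\star$ in the second case is not needed since the paper's right-hand side is already in the $\sum_b b^\star \otimes b^\vee$ form; neither affects correctness.
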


\begin{proof}
	We have:
	\begin{multline*}
	\begin{tikzpicture}[anchorbase]
	\draw[-] (0, 0) -- (0.8, 0.8);
	\draw[-] (0.8, 0) -- (0, 0.8);
	\teleportUU{0.2, 0.2}{0.6, 0.2};
	\end{tikzpicture}
	=
	\summ_{b \in B_A}  \begin{tikzpicture}[anchorbase]
	\draw[-] (0, 0) -- (0.8, 0.8);
	\draw[-] (0.8, 0) -- (0, 0.8);
	\token{0.2, 0.2}{east}{b};
	\token{0.6, 0.2}{west}{b^\vee};
	\end{tikzpicture}
	=
	\summ_{b \in B_A} \begin{tikzpicture}[anchorbase]
	\draw[-] (0, 0) -- (0.8, 0.8);
	\draw[-] (0.8, 0) -- (0, 0.8);
	\token{0.6, 0.6}{west}{b};
	\token{0.6, 0.2}{west}{b^\vee};
	\end{tikzpicture}
	\overset{\eqref{eq:superInterchangeLaw}}{=}
	\summ_{b \in B_A} (-1)^{\bar{b}} \begin{tikzpicture}[anchorbase]
	\draw[-] (0, 0) -- (0.8, 0.8);
	\draw[-] (0.8, 0) -- (0, 0.8);
	\token{0.6, 0.6}{west}{b};
	\token{0.2, 0.6}{east}{b^\vee};
	\end{tikzpicture}
	\overset{\eqref{eq:doubleDual}}{=}
	\summ_{b \in B_A}  \begin{tikzpicture}[anchorbase]
	\draw[-] (0, 0) -- (0.8, 0.8);
	\draw[-] (0.8, 0) -- (0, 0.8);
	\token{0.6, 0.6}{west}{b^\vee};
	\token{0.2, 0.6}{east}{b};
	\end{tikzpicture}
	= \begin{tikzpicture}[anchorbase]
	\draw[-] (0, 0) -- (0.8, 0.8);
	\draw[-] (0.8, 0) -- (0, 0.8);
	\teleportUU{0.2, 0.6}{0.6, 0.6};
	\end{tikzpicture}.
	\end{multline*}
	Note that we changed to a sum over the dual basis in the second-to-last equality. The calculation for reflecting teleporters is essentially the same.
\end{proof}

\begin{lem} \label{lem:teleporterCupCapSliding}
	Teleporter endpoints slide across cups and caps, flipping orientation in the process, e.g.\ ${\begin{tikzpicture}[anchorbase]
		\draw[-] (0, 0) -- (0, 0.5) arc(180:0:0.25) -- (0.5, 0);
		\draw[-] (1, 0) -- (1, 0.75);	
		\teleportUU{0.5, 0.4}{1, 0.15};
		\end{tikzpicture}
		= \begin{tikzpicture}[anchorbase]
		\draw[-] (0, 0) -- (0, 0.5) arc(180:0:0.25) -- (0.5, 0);
		\draw[-] (1, 0) -- (1, 0.75);	
		\teleportDU{0, 0.4}{1, 0.15};
		\end{tikzpicture}} \text{ and } {\begin{tikzpicture}[anchorbase]
		\draw[-] (0, 0) -- (0, -0.5) arc(180:360:0.25) -- (0.5, 0);
		\draw[-] (1, 0) -- (1, -0.75);
		\teleportUU{0.5, -0.45}{1, -0.2};
		\end{tikzpicture} = \begin{tikzpicture}[anchorbase]
		\draw[-] (0, 0) -- (0, -0.5) arc(180:360:0.25) -- (0.5, 0);
		\draw[-] (1, 0) -- (1, -0.75);
		\teleportDU{0, -0.45}{1, -0.2};
		\end{tikzpicture}}.$
\end{lem}

\begin{proof}
	We have:
	\begin{equation*}
	\begin{tikzpicture}[anchorbase]
	\draw[-] (0, 0) -- (0, 0.5) arc(180:0:0.25) -- (0.5, 0);
	\draw[-] (1, 0) -- (1, 0.75);	
	\teleportUU{0.5, 0.4}{1, 0.15};
	\end{tikzpicture}
	\ =\
	\summ_{b \in B_A}
	\begin{tikzpicture}[anchorbase]
	\draw[-] (0, 0) -- (0, 0.5) arc(180:0:0.25) -- (0.5, 0);
	\draw[-] (1, 0) -- (1, 0.75);
	\token{0.5, 0.4}{east}{b};
	\token{1, 0.15}{west}{b^\vee};
	\end{tikzpicture}
	\overset{\eqref{rel:tokens}}{=}
	\summ_{b \in B_A}
	\begin{tikzpicture}[anchorbase]
	\draw[-] (0, 0) -- (0, 0.5) arc(180:0:0.25) -- (0.5, 0);
	\draw[-] (1, 0) -- (1, 0.75);
	\token{0, 0.4}{east}{b^\star};
	\token{1, 0.15}{west}{b^\vee};
	\end{tikzpicture}
	\ =\
	\begin{tikzpicture}[anchorbase]
	\draw[-] (0, 0) -- (0, 0.5) arc(180:0:0.25) -- (0.5, 0);
	\draw[-] (1, 0) -- (1, 0.75);	
	\teleportDU{0, 0.4}{1, 0.15};
	\end{tikzpicture}.
	\end{equation*}
	The second example from the lemma statement follows from an analogous calculation, using \eqref{rel:brauerBasics} in place of \eqref{rel:tokens}.
\end{proof}

Lemma \ref{lem:teleporterCupCapSliding} allows us to unambiguously draw diagrams with sideways teleporter endpoints appearing at critical points of cups and caps; we define such diagrams to be equal to the morphism obtained by sliding the endpoints to either side of the cup(s) and/or cap(s). For instance,
$$\begin{tikzpicture}[anchorbase]
\draw[-] (0, 0) -- (0, 0.3) arc(180:0:0.3) -- (0.6, 0);
\draw[-] (0, 1.5) -- (0, 1.2) arc(180:360:0.3) -- (0.6, 1.5);
\teleportRR{0.3, 0.6}{0.3, 0.9};
\end{tikzpicture}
= \begin{tikzpicture}[anchorbase]
\draw[-] (0, 0) -- (0, 0.3) arc(180:0:0.3) -- (0.6, 0);
\draw[-] (0, 1.5) -- (0, 1.2) arc(180:360:0.3) -- (0.6, 1.5);
\teleportUD{0, 0.2}{0, 1.3};
\end{tikzpicture}
= \begin{tikzpicture}[anchorbase]
\draw[-] (0, 0) -- (0, 0.3) arc(180:0:0.3) -- (0.6, 0);
\draw[-] (0, 1.5) -- (0, 1.2) arc(180:360:0.3) -- (0.6, 1.5);
\teleportUD{0.6, 1.3}{0.6, 0.2};
\end{tikzpicture}
= \begin{tikzpicture}[anchorbase]
\draw[-] (0, 0) -- (0, 0.3) arc(180:0:0.3) -- (0.6, 0);
\draw[-] (0, 1.5) -- (0, 1.2) arc(180:360:0.3) -- (0.6, 1.5);
\teleportUU{0, 0.2}{0.6, 1.3};
\end{tikzpicture}
= \begin{tikzpicture}[anchorbase]
\draw[-] (0, 0) -- (0, 0.3) arc(180:0:0.3) -- (0.6, 0);
\draw[-] (0, 1.5) -- (0, 1.2) arc(180:360:0.3) -- (0.6, 1.5);
\teleportDD{0.6, 0.2}{0, 1.3};
\end{tikzpicture}\ .$$

\begin{defin} \label{def:affineCategory}
	The \emph{(unoriented) affine Frobenius Brauer category associated to $(A, \tr, -^\star)$,} denoted $\AB(A, -^\star)$, is the supercategory obtained from $\B(A, -^\star)$ by adjoining one new even generating morphism,
	$\begin{tikzpicture}[anchorbase]
	\draw[-] (0, 0) -- (0, 0.4);
	\adot{(0, 0.2)};
	\end{tikzpicture} \colon \go \to \go,$
	called an \emph{(affine) dot}, subject to the following relations:
	\begin{equation} \label{rel:dotCrossingSlide}
	\begin{tikzpicture}[anchorbase]
	\draw[-] (0, 0) -- (0.8, 0.8);
	\draw[-] (0.8, 0) -- (0, 0.8);
	\adot{(0.2, 0.6)};
	\end{tikzpicture}
	\ -\
	\begin{tikzpicture}[anchorbase]
	\draw[-] (0, 0) -- (0.8, 0.8);
	\draw[-] (0.8, 0) -- (0, 0.8);
	\adot{(0.6, 0.2)};
	\end{tikzpicture}
	\ =\
	\begin{tikzpicture}[anchorbase]
	\draw[-] (0, 0) -- (0, 0.8);
	\draw[-] (0.6, 0) -- (0.6, 0.8);
	\teleportUU{0, 0.4}{0.6, 0.4};
	\end{tikzpicture}
	\ -\
	\begin{tikzpicture}[anchorbase]
	\draw[-] (0, 0) -- (0, 0.3) arc(180:0:0.3) -- (0.6, 0);
	\draw[-] (0, 1.5) -- (0, 1.2) arc(180:360:0.3) -- (0.6, 1.5);
	\teleportRR{0.3, 0.6}{0.3, 0.9};
	\end{tikzpicture}\ ,
	\end{equation}
	\begin{equation} \label{rel:dotCapSlide}
	\begin{tikzpicture}[anchorbase]
	\draw[-] (0, 0) -- (0, 0.3) arc(180:0:0.3) -- (0.6, 0);
	\adot{(0, 0.2)}; 
	\end{tikzpicture}
	\ =\
	-\ 
	\begin{tikzpicture}[anchorbase]
	\draw[-] (0, 0) -- (0, 0.3) arc(180:0:0.3) -- (0.6, 0);
	\adot{(0.6, 0.2)}; 
	\end{tikzpicture}\ ,
	\end{equation}
	\begin{equation} \label{rel:dotTokenCommuting}
	\begin{tikzpicture}[anchorbase]
	\draw[-] (0, 0) -- (0, 0.8);
	\adot{(0, 0.25)};
	\token{0, 0.55}{west}{a};
	\end{tikzpicture}
	\ =\
	\begin{tikzpicture}[anchorbase]
	\draw[-] (0, 0) -- (0, 0.8);
	\adot{(0, 0.55)};
	\token{0, 0.25}{west}{a};
	\end{tikzpicture} \text{ for all } a \in A.
	\end{equation}
	As with the non-affine category, for any $d \in \kk$, we define $\AB(A, -^\star, d)$ to be the quotient of $\AB(A, -^\star)$ by the additional relations \eqref{eq:specialization}.
\end{defin}

As mentioned previously, teleporter morphisms depend on the choice of trace map for $A$, so to be fully precise one needs to indicate which trace map is being used to define $\AB(A, -^\star)$. However, the following result shows that different choices of trace map yield isomorphic categories. As such, we choose to suppress this detail in our notation for the affine Frobenius Brauer category for the sake of simplicity.

\begin{prop} \label{prop:traceIndependence}
	Let $A$ be a superalgebra, and suppose that $(A, \tr_1, -^\star)$ and $(A, \tr_2, -^\star)$ are symmetric involutive Frobenius superalgebras, using the same involution $-^\star$ in both cases. Write $\AB(A, -^\star)_i$ for the affine Frobenius Brauer category defined with respect to $\tr_i$. There exists an even invertible element $u \in A$ and an isomorphism $T \colon \AB(A, -^\star)_1 \to \AB(A, -^\star)_2$ given by:
	$$T(\go) = \go, \quad T(f) = f \text{ for all } f \in \{\begin{tikzpicture}[anchorbase]
	\draw[-] (0, 0) -- (0.4, 0.4);
	\draw[-] (0.4, 0) -- (0, 0.4);
	\end{tikzpicture}, \begin{tikzpicture}[anchorbase]
	\draw[-] (0, 0) -- (0, 0.2) arc(180:0:0.2) -- (0.4, 0);
	\end{tikzpicture}, \begin{tikzpicture}[anchorbase, yscale=-1]
	\draw[-] (0, 0) -- (0, 0.2) arc(180:0:0.2) -- (0.4, 0);
	\end{tikzpicture}, \begin{tikzpicture}[anchorbase]
	\draw[-] (0, 0) -- (0, 0.4);
	\token{0, 0.2}{west}{a};
	\end{tikzpicture} : a \in A\}, \quad T\left(\begin{tikzpicture}[anchorbase]
	\draw[-] (0, 0) -- (0, 0.4);
	\adot{(0, 0.2)};
	\end{tikzpicture} \right) = \begin{tikzpicture}[anchorbase]
	\draw[-] (0, 0) -- (0, 0.8);
	\adot{(0, 0.25)};
	\token{0, 0.55}{west}{u};
	\end{tikzpicture}.$$
\end{prop}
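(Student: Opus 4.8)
The strategy is to (1) realize $\tr_2$ as a twist of $\tr_1$ by a distinguished invertible element $u \in A$, (2) check that the stated assignment $T$ respects every defining relation of $\AB^\sigma(A,-^\star)_1$, and (3) produce an explicit inverse. For Step (1): nondegeneracy of $(x,y)\mapsto\tr_1(xy)$ gives a unique $u \in A$ with $\tr_2(x) = \tr_1(ux)$ for all $x$, and $u$ is even since both trace maps are. Using that both Frobenius structures are symmetric and compatible with $-^\star$ (as is standing throughout the paper), one checks that $u$ is moreover central, invertible, and $\star$-invariant: centrality comes from comparing the symmetry of $\tr_2$ with \eqref{eq:symmetric} for $\tr_1$, which forces $\tr_1((ux-xu)y)=0$ for all $x,y$; invertibility follows because $ua=0$ gives $\tr_2(ay)=\tr_1(uay)=0$ for all $y$, hence $a=0$, so left multiplication by $u$ is injective and thus bijective on the finite-dimensional algebra $A$ (its inverse $u^{-1}$ being again central, even, and $\star$-invariant); and $u^\star = u$ follows from $\tr_1(ux^\star)=\tr_2(x^\star)=\tr_2(x)=\tr_1(ux)$ together with $\tr_1(ux^\star)=\tr_1((ux^\star)^\star)=\tr_1(xu^\star)$ (via \eqref{eq:traceCompatibility}, evenness of $u$, and $x^{\star\star}=x$) and the symmetry of $\tr_1$. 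Two consequences will be used repeatedly: the associated dual bases satisfy $b^{\vee_1} = u\,b^{\vee_2} = b^{\vee_2}u$; and a Frobenius token labelled by a central, even, $\star$-invariant element such as $u$ is \emph{transparent} in $\AB^\sigma(A,-^\star)_2$, meaning (by \eqref{rel:tokens}, \eqref{rel:brauerBasics}, Proposition~\ref{prop:teleporters}, \eqref{rel:dotTokenCommuting}, and the teleporter-sliding properties of Section~\ref{ss:affine}) that it slides freely along its strand-component past crossings, cups, caps, dots, and teleporters of either type without change, so it can be relocated to any boundary point lying on that component.

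For Step (2): since $T$ fixes all generators of $\B^\sigma(A,-^\star)$ and the relations \eqref{rel:brauer}, \eqref{rel:tokens} (and, in the specialized version, \eqref{eq:specialization}, as $\str_A(a)$ is the supertrace of right multiplication by $a$ on the super vector space $A$ and hence independent of the trace map) involve only those generators, those relations are preserved automatically; and $T$ preserves parities because $u$ is even. Relation \eqref{rel:dotTokenCommuting} is preserved because its two sides are sent to $(\text{dot})\cdot((ua)\text{-token})$ and $((au)\text{-token})\cdot(\text{dot})$, equal by centrality of $u$ and \eqref{rel:dotTokenCommuting} in $\AB^\sigma(A,-^\star)_2$. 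For \eqref{rel:dotCapSlide}, applying $T$ places a $u$-token next to each dot; transparency slides both onto (say) the left boundary strand of the cap, reducing the claim to \eqref{rel:dotCapSlide} in $\AB^\sigma(A,-^\star)_2$ precomposed with that $u$-token. The crux is \eqref{rel:dotCrossingSlide}. On its left-hand side, $T$ attaches a $u$-token to each dot, and since both dots lie on the one strand of the crossing joining the lower-right input to the upper-left output, transparency slides both $u$-tokens down to the lower-right input, so that $T$ of the left-hand side equals (the left-hand side of \eqref{rel:dotCrossingSlide} in $\AB^\sigma(A,-^\star)_2$)$\,\circ\,(\id_\go\otimes(u\text{-token}))$. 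On the right-hand side, substituting $b^{\vee_1}=b^{\vee_2}u$ and $(b^{\vee_1})^\star=(b^\star)^{\vee_2}u$ (via \eqref{eq:dualStar} and $u^\star=u$) produces in each summand of the ordinary teleporter, and of the cap--cup reflecting-teleporter diagram, exactly one extra token factor $u$; using transparency (including that a $u$-token traverses a reflecting teleporter unchanged since $u^\star=u$), this factor is relocated in every summand onto the lower-right input strand, so that $T$ of the right-hand side equals (the right-hand side of \eqref{rel:dotCrossingSlide} in $\AB^\sigma(A,-^\star)_2$)$\,\circ\,(\id_\go\otimes(u\text{-token}))$. Since \eqref{rel:dotCrossingSlide} holds in $\AB^\sigma(A,-^\star)_2$, the two agree, and $T$ extends to a monoidal superfunctor.

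For Step (3): exchanging $\tr_1$ and $\tr_2$ in Step (1) produces the element $u^{-1}$ and, by Step (2), a monoidal superfunctor $T'\colon\AB^\sigma(A,-^\star)_2\to\AB^\sigma(A,-^\star)_1$ fixing the generators of $\B^\sigma(A,-^\star)$ and sending the dot to a dot with a $u^{-1}$-token. Then $T'\circ T$ and $T\circ T'$ fix the $\B^\sigma(A,-^\star)$-generators and send the dot to a dot carrying the token $u^{-1}u = 1_A$, resp.\ $uu^{-1}=1_A$, which by \eqref{rel:tokens} is just the dot; hence both composites are identities and $T$ is an isomorphism. I expect the main obstacle to be the bookkeeping in Step (2) for relation \eqref{rel:dotCrossingSlide}: one must confirm that the lone stray $u$-token created by $T$ in each summand on each side of the relation can be transported \emph{to one and the same boundary strand} — the lower-right input — so that the common factor $\id_\go\otimes(u\text{-token})$ cancels; this is exactly what transparency of the $u$-token provides, but it has to be checked in each of the three diagrams involved (the crossing, the parallel-strand teleporter, and the cap--cup reflecting-teleporter diagram), and there it also leans on the free sliding of teleporter endpoints. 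Establishing that $u$ is central — the one place where the symmetry hypothesis on the Frobenius structures is genuinely needed — deserves equal care, as centrality is also what makes \eqref{rel:dotTokenCommuting} go through under $T$.
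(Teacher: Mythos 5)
Your proposal is correct and takes essentially the same route as the paper: obtain the twisting element $u$ (the paper cites \cite[Prop.~2.1.6]{abrams_1997} for its existence and invertibility, then derives the even/central/$\star$-invariant properties from the hypotheses as you do), observe that dual bases and hence teleporters differ by a $u$-token, verify the relations involving the affine dot by relocating that stray $u$-token using its centrality and $\star$-invariance, and build the inverse by swapping the roles of $\tr_1$ and $\tr_2$. The paper carries out the verification of \eqref{rel:dotCrossingSlide} as an explicit string-diagram calculation via a teleporter conversion identity ($b^{\vee_2}=u^{-1}b^{\vee_1}$), whereas you phrase the same bookkeeping more conceptually as ``transparency'' of the $u$-token — both amount to the same manipulations — and you supply the derivations of $u$'s properties that the paper states without proof.
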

\begin{proof}	
	Since $\tr_1$ and $\tr_2$ are both trace maps for $A$, there exists an invertible element $u \in A$ such that $\tr_2(a) = \tr_1(ua)$ for all $a \in A$; see e.g.\ \cite[Prop.~2.1.6]{abrams_1997} for a proof. Moreover, since both $\tr_1$ and $\tr_2$ are even, symmetric, and compatible with $-^\star$, we get that $u$ is even, central, and satisfies $u^\star = u$. Writing $-^{\vee_i}$ for left duals taken with respect to $\tr_i$, we have $b^{\vee_2} = u^{-1}b^{\vee_1}$. Using subscripts in the same way for teleporters, this yields:
	\begin{equation} \label{eq:teleporterConversion}
	\begin{tikzpicture}[anchorbase]
	\draw[-] (0, 0) -- (0, 0.8);
	\draw[-] (0.6, 0) -- (0.6, 0.8);
	\teleportUU{0, 0.4}{0.6, 0.4};
	\node at (0.8, 0) {$\scriptstyle 2$};
	\end{tikzpicture}
	=
	\summ_{b \in B_A} \begin{tikzpicture}[anchorbase]
	\draw[-] (0, 0) -- (0, 0.8);
	\draw[-] (0.6, 0) -- (0.6, 0.8);
	\token{0, 0.4}{east}{b};
	\token{0.6, 0.4}{west}{b^{\vee_2}};
	\end{tikzpicture}
	=
	\summ_{b \in B_A} \begin{tikzpicture}[anchorbase]
	\draw[-] (0, 0) -- (0, 0.8);
	\draw[-] (0.6, 0) -- (0.6, 0.8);
	\token{0, 0.4}{east}{b};
	\token{0.6, 0.4}{west}{u^{-1}b^{\vee_1}};
	\end{tikzpicture}
	\overset{\eqref{rel:tokens}}{=}
	\summ_{b \in B_A} \begin{tikzpicture}[anchorbase]
	\draw[-] (0, 0) -- (0, 0.8);
	\draw[-] (0.6, 0) -- (0.6, 0.8);
	\token{0, 0.2}{east}{b};
	\token{0.6, 0.2}{west}{b^{\vee_1}};
	\token{0.6, 0.6}{west}{u^{-1}};
	\end{tikzpicture}
	=
	\begin{tikzpicture}[anchorbase]
	\draw[-] (0, 0) -- (0, 0.8);
	\draw[-] (0.6, 0) -- (0.6, 0.8);
	\teleportUU{0, 0.2}{0.6, 0.2};
	\token{0.6, 0.6}{west}{u^{-1}};
	\node at (0.8, 0) {$\scriptstyle 1$};
	\end{tikzpicture}\ .
	\end{equation}
	It is immediate that $T$ respects all of the defining relations for $\B(A, -^\star)$. For \eqref{rel:dotCrossingSlide}, we compute:
	\begin{multline*}
		T\left(\begin{tikzpicture}[anchorbase]
		\draw[-] (0, 0) -- (0.8, 0.8);
		\draw[-] (0.8, 0) -- (0, 0.8);
		\adot{(0.2, 0.6)};
		\end{tikzpicture}
		\ -\
		\begin{tikzpicture}[anchorbase]
		\draw[-] (0, 0) -- (0.8, 0.8);
		\draw[-] (0.8, 0) -- (0, 0.8);
		\adot{(0.6, 0.2)};
		\end{tikzpicture} \right) 
		= \begin{tikzpicture}[anchorbase]
		\draw[-] (0, 0) -- (0.8, 0.8);
		\draw[-] (0.8, 0) -- (0, 0.8);
		\adot{(0.275, 0.525)};
		\token{0.125, 0.675}{east}{u};
		\end{tikzpicture}
		\ -\
		\begin{tikzpicture}[anchorbase]
		\draw[-] (0, 0) -- (0.8, 0.8);
		\draw[-] (0.8, 0) -- (0, 0.8);
		\adot{(0.675, 0.125)};
		\token{0.525, 0.275}{west}{\vphantom{\frac{a}{b}}u};
		\end{tikzpicture}
		\overset{\eqref{rel:brauerBasics}}{=} \begin{tikzpicture}[anchorbase]
		\draw[-] (0, 0) -- (0.8, 0.8);
		\draw[-] (0.8, 0) -- (0, 0.8);
		\adot{(0.275, 0.525)};
		\token{0.125, 0.675}{east}{u};
		\end{tikzpicture}
		\ -\
		\begin{tikzpicture}[anchorbase]
		\draw[-] (0, 0) -- (0.8, 0.8);
		\draw[-] (0.8, 0) -- (0, 0.8);
		\adot{(0.6, 0.2)};
		\token{0.2, 0.6}{east}{u};
		\end{tikzpicture}
		\overset{\eqref{rel:dotCrossingSlide}}{=}
		\begin{tikzpicture}[anchorbase]
		\draw[-] (0, 0) -- (0, 0.8);
		\draw[-] (0.6, 0) -- (0.6, 0.8);
		\teleportUU{0, 0.2}{0.6, 0.2};
		\token{0, 0.6}{east}{u};
		\node at (0.8, 0) {$\scriptstyle 2$};
		\end{tikzpicture}
		\ -\
		\begin{tikzpicture}[anchorbase]
		\draw[-] (0, 0) -- (0, 0.3) arc(180:0:0.3) -- (0.6, 0);
		\draw[-] (0, 1.5) -- (0, 1.2) arc(180:360:0.3) -- (0.6, 1.5);
		\teleportRR{0.3, 0.6}{0.3, 0.9};
		\token{0, 1.2}{east}{u};
		\node at (0.8, 0) {$\scriptstyle 2$};
		\end{tikzpicture}\\
		\overset{\eqref{eq:teleporterConversion}}{=}
		\begin{tikzpicture}[anchorbase]
		\draw[-] (0, 0) -- (0, 0.8);
		\draw[-] (0.6, 0) -- (0.6, 0.8);
		\teleportUU{0, 0.2}{0.6, 0.2};
		\token{0, 0.6}{east}{u};
		\token{0.6, 0.6}{west}{u^{-1}};
		\node at (0.8, 0) {$\scriptstyle 1$};
		\end{tikzpicture}
		\ -\
		\begin{tikzpicture}[anchorbase]
		\draw[-] (0, 0) -- (0, 0.3) arc(180:0:0.3) -- (0.6, 0);
		\draw[-] (0, 1.5) -- (0, 1.2) arc(180:360:0.3) -- (0.6, 1.5);
		\teleportRR{0.3, 0.6}{0.3, 0.9};
		\token{0.6, 0.3}{west}{u^{-1}};
		\token{0, 1.2}{east}{u};
		\node at (0.8, 0) {$\scriptstyle 1$};
		\end{tikzpicture}
		\overset{\eqref{rel:teleporting}}{=}
		\begin{tikzpicture}[anchorbase]
		\draw[-] (0, 0) -- (0, 1);
		\draw[-] (0.6, 0) -- (0.6, 1);
		\teleportUU{0, 0.5}{0.6, 0.5};
		\token{0, 0.2}{east}{u^{-1}};
		\token{0, 0.8}{east}{u};
		\node at (0.8, 0) {$\scriptstyle 1$};
		\end{tikzpicture}
		\ -\
		\begin{tikzpicture}[anchorbase]
		\draw[-] (0, 0) -- (0, 0.3) arc(180:0:0.3) -- (0.6, 0);
		\draw[-] (0, 1.7) -- (0, 1.2) arc(180:360:0.3) -- (0.6, 1.7);
		\teleportRR{0.3, 0.6}{0.3, 0.9};
		\token{0, 1.2}{east}{u^{-1}};
		\token{0, 1.5}{east}{u};
		\node at (0.8, 0) {$\scriptstyle 1$};
		\end{tikzpicture}\\
		\overset{*}{=}
		\begin{tikzpicture}[anchorbase]
		\draw[-] (0, 0) -- (0, 1);
		\draw[-] (0.6, 0) -- (0.6, 1);
		\teleportUU{0, 0.2}{0.6, 0.2};
		\token{0, 0.5}{east}{u^{-1}};
		\token{0, 0.8}{east}{u};
		\node at (0.8, 0) {$\scriptstyle 1$};
		\end{tikzpicture}
		\ -\
		\begin{tikzpicture}[anchorbase]
		\draw[-] (0, 0) -- (0, 0.3) arc(180:0:0.3) -- (0.6, 0);
		\draw[-] (0, 1.7) -- (0, 1.2) arc(180:360:0.3) -- (0.6, 1.7);
		\teleportRR{0.3, 0.6}{0.3, 0.9};
		\token{0, 1.2}{east}{u^{-1}};
		\token{0, 1.5}{east}{u};
		\node at (0.8, 0) {$\scriptstyle 1$};
		\end{tikzpicture}
		\overset{\eqref{rel:tokens}}{=}
		\begin{tikzpicture}[anchorbase]
		\draw[-] (0, 0) -- (0, 0.8);
		\draw[-] (0.6, 0) -- (0.6, 0.8);
		\teleportUU{0, 0.4}{0.6, 0.4};
		\node at (0.8, 0) {$\scriptstyle 1$};
		\end{tikzpicture}
		\ -\
		\begin{tikzpicture}[anchorbase]
		\draw[-] (0, 0) -- (0, 0.3) arc(180:0:0.3) -- (0.6, 0);
		\draw[-] (0, 1.5) -- (0, 1.2) arc(180:360:0.3) -- (0.6, 1.5);
		\teleportRR{0.3, 0.6}{0.3, 0.9};
		\node at (0.8, 0) {$\scriptstyle 1$};
		\end{tikzpicture}\\
		= T\left(
		\begin{tikzpicture}[anchorbase]
		\draw[-] (0, 0) -- (0, 0.8);
		\draw[-] (0.6, 0) -- (0.6, 0.8);
		\teleportUU{0, 0.4}{0.6, 0.4};
		\node at (0.8, 0) {$\scriptstyle 1$};
		\end{tikzpicture}
		\ -\
		\begin{tikzpicture}[anchorbase]
		\draw[-] (0, 0) -- (0, 0.3) arc(180:0:0.3) -- (0.6, 0);
		\draw[-] (0, 1.5) -- (0, 1.2) arc(180:360:0.3) -- (0.6, 1.5);
		\teleportRR{0.3, 0.6}{0.3, 0.9};
		\node at (0.8, 0) {$\scriptstyle 1$};
		\end{tikzpicture}\right),
	\end{multline*}
	where the equality labelled $*$ follows from the fact that $u$ (and hence $u^{-1}$) is even and central, and so $\begin{tikzpicture}[anchorbase]
	\draw[-] (0, 0) -- (0, 0.4);
	\token{0, 0.2}{west}{u^{-1}};
	\end{tikzpicture}$ slides through teleporter endpoints. A quick calculation shows that $T$ respects \eqref{rel:dotCapSlide} and \eqref{rel:dotTokenCommuting}, using those two relations in the target category together with \eqref{rel:tokens} and the fact that $u$ is central and satisfies $u^\star = u$. This shows $T$ is well-defined. Noting that $\tr_1(a) = \tr_2(u^{-1}a)$ for all $a \in A$, reversing the roles of $\tr_1$ and $\tr_2$ yields a functor $\AB(A, -^\star)_2 \to \AB(A, -^\star)_1$ sending $\begin{tikzpicture}[anchorbase]
	\draw[-] (0, 0) -- (0, 0.4);
	\adot{(0, 0.2)};
	\end{tikzpicture}$ to $\begin{tikzpicture}[anchorbase]
	\draw[-] (0, 0) -- (0, 0.8);
	\adot{(0, 0.25)};
	\token{0, 0.55}{west}{u^{-1}};
	\end{tikzpicture}$, which is the inverse of $T$. Hence $T$ is an isomorphism.
\end{proof}

\begin{lem} \label{lem:affineBasics}
	The following relations hold in $\AB(A, -^\star)$:
	\begin{equation} \label{rel:mirroredDotSlide}
	\begin{tikzpicture}[anchorbase]
	\draw[-] (0, 0) -- (0.8, 0.8);
	\draw[-] (0.8, 0) -- (0, 0.8);
	\adot{(0.2, 0.2)};
	\end{tikzpicture}
	\ -\
	\begin{tikzpicture}[anchorbase]
	\draw[-] (0, 0) -- (0.8, 0.8);
	\draw[-] (0.8, 0) -- (0, 0.8);
	\adot{(0.6, 0.6)};
	\end{tikzpicture}
	\ =\
	\begin{tikzpicture}[anchorbase]
	\draw[-] (0, 0) -- (0, 0.8);
	\draw[-] (0.6, 0) -- (0.6, 0.8);
	\teleportUU{0, 0.4}{0.6, 0.4};
	\end{tikzpicture}
	\ -\
	\begin{tikzpicture}[anchorbase]
	\draw[-] (0, 0) -- (0, 0.3) arc(180:0:0.3) -- (0.6, 0);
	\draw[-] (0, 1.5) -- (0, 1.2) arc(180:360:0.3) -- (0.6, 1.5);
	\teleportRR{0.3, 0.6}{0.3, 0.9};
	\end{tikzpicture}, \quad \quad \quad
	\begin{tikzpicture}[anchorbase]
	\draw[-] (0, 0.5) -- (0, 0.25) arc(180:360:0.25) -- (0.5, 0.5);
	\adot{(0, 0.35)};
	\end{tikzpicture}\
	\ =\ 
	-\ \begin{tikzpicture}[anchorbase]
	\draw[-] (0, 0.5) -- (0, 0.25) arc(180:360:0.25) -- (0.5, 0.5);
	\adot{(0.5, 0.35)};
	\end{tikzpicture}.
	\end{equation}
\end{lem}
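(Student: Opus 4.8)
The plan is to derive both identities of \eqref{rel:mirroredDotSlide} from the defining relations \eqref{rel:dotCrossingSlide}--\eqref{rel:dotTokenCommuting} together with the Brauer relations \eqref{rel:brauer}--\eqref{rel:tokens}, Lemma~\ref{lem:brauerBasics}, Proposition~\ref{prop:teleporters}, and Lemma~\ref{lem:teleporterCrossingSlide}. Roughly, the first identity is \eqref{rel:dotCrossingSlide} ``conjugated'' by a crossing, and the second is a version of \eqref{rel:dotCapSlide} transported past cups and caps.

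For the first identity I would pre- and post-compose \eqref{rel:dotCrossingSlide} with a crossing. Since the crossing squares to the identity (first relation of \eqref{rel:brauer}), the left-hand side of \eqref{rel:dotCrossingSlide}, which slides a dot along one strand of the crossing, is carried to the left-hand side of \eqref{rel:mirroredDotSlide}, which slides a dot along the other strand. It then remains to show that the right-hand side of \eqref{rel:dotCrossingSlide} is fixed by this conjugation. For the ordinary teleporter term this follows at once from Lemma~\ref{lem:teleporterCrossingSlide} (a teleporter commutes with a crossing) and the fact that the crossing is its own inverse. For the reflecting-teleporter-between-a-cap-and-a-cup term, one absorbs the lower crossing into the cap using the fourth relation of \eqref{rel:brauer}, absorbs the upper crossing into the cup using the first relation of \eqref{rel:brauerBasics}, slides the Frobenius tokens comprising the teleporter through the crossings using \eqref{rel:tokens} and the third relation of \eqref{rel:brauerBasics}, and re-indexes the resulting basis sum back to its original form using \eqref{eq:doubleDual}, \eqref{eq:dualStar} and basis-independence of teleporters, checking that the intervening signs cancel.

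For the second identity I would insert a zigzag (the third relation of \eqref{rel:brauer}) into the leg of the cup carrying the dot, rewriting the dotted cup as a composite in which the dot sits on one leg of a \emph{cap}. Applying \eqref{rel:dotCapSlide} then moves that dot to the cap's other leg at the cost of a sign $-1$, and removing the zigzag returns a dot on the other leg of the cup. The extra $(-1)^\sigma$ arises because the two zigzags in \eqref{rel:brauer} differ by $(-1)^\sigma$, i.e.\ because the cup and cap are mutually dual only up to that sign; for the same reason the naive vertical reflection of \eqref{rel:dotCapSlide} fails on the nose, so this detour is genuinely necessary.

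The conceptual content is light and the real work is the sign bookkeeping, with the reflecting-teleporter term of the first identity being the main obstacle. There one must simultaneously track the $(-1)^\sigma$ from pulling a crossing through a cup, the $\star$'s built into reflecting teleporters, the parity signs hidden in the ``$(-1)^{\bar{b}x}$'' convention for teleporter endpoints inside larger diagrams, and the sign from re-indexing over a dual or $\star$-twisted basis, and check that they combine to give back exactly the term on the right of \eqref{rel:dotCrossingSlide}. I would organize things by first proving, as a preliminary observation, that conjugation by a crossing fixes both kinds of teleporter and fixes the cap-teleporter-cup morphism; the two identities of \eqref{rel:mirroredDotSlide} are then formal consequences of \eqref{rel:dotCrossingSlide} and \eqref{rel:dotCapSlide}.
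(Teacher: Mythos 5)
Your strategy for both identities is essentially the one the paper uses, so the proposal is on track. For the first identity the paper also conjugates \eqref{rel:dotCrossingSlide} by a crossing (adjoining one at the top and one at the bottom), uses the first relation of \eqref{rel:brauer} to reduce the dotted crossings on the left-hand side, slides teleporter endpoints through the added crossings via Lemma~\ref{lem:teleporterCrossingSlide}, and then reverses the orientations of both teleporter endpoints (a free move, as established in Section~\ref{ss:affine}) to recover the original reflecting teleporter; your ``absorb crossings into the cap and cup and re-index over $B_A^\star$'' is a valid way to package this last step. For the second identity the paper does exactly what you describe: insert a zigzag from \eqref{rel:brauer}, picking up $(-1)^\sigma$, apply \eqref{rel:dotCapSlide} for the factor $-1$, then remove the zigzag without sign.

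One caution worth flagging: you isolate the first relation of \eqref{rel:brauerBasics} as the step that introduces $(-1)^\sigma$ when absorbing the top crossing into the cup, and you defer the claim that this cancels against the re-indexing. That cancellation is exactly where the subtlety lives. In particular, the re-indexing moves \eqref{eq:doubleDual} and \eqref{eq:dualStar} contribute signs depending on $\bar{b}$, not on $\sigma$, so they cannot by themselves offset a global $(-1)^\sigma$. The mechanism that actually saves you is the one the paper emphasizes — sliding the two reflecting-teleporter endpoints across the cap and the cup \emph{flips their facing directions}, converting e.g.\ a $\teleportUD{0,0}{0.3,0}$ pair into a $\teleportDU{0,0}{0.3,0}$ pair, and that orientation flip together with the free simultaneous-reversal rule is what realigns the $\star$'s so that the net sign is $1$ rather than $(-1)^\sigma$. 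Your sketch mentions absorbing crossings and sliding tokens, but does not explicitly track how the endpoint orientations change in that process; if you carry out the step treating the endpoints as plain tokens (as opposed to oriented teleporter ends), it is easy to end up off by $(-1)^\sigma$. So when you write up the details, make the orientation bookkeeping for the teleporter endpoints explicit — that is the one place the computation genuinely threatens to go wrong.
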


\begin{proof}
	Adjoining a crossing to the top and bottom of \eqref{rel:dotCrossingSlide} yields
	\begin{equation*}
	\begin{tikzpicture}[anchorbase]
	\draw [-] (0, -0.2) -- (0,0) to[out=90,in=270] (0.6,0.6) to[out=90,in=270] (0, 1.2) to[out=90,in=270] (0.6, 1.8) -- (0.6, 2);
	\draw [-] (0.6, -0.2) -- (0.6,0) to[out=90,in=270] (0,0.6) to[out=90,in=270] (0.6,1.2) to[out=90,in=270] (0,1.8) -- (0, 2);
	\adot{(0.05,1.35)};
	\end{tikzpicture} - 
	\begin{tikzpicture}[anchorbase]
	\draw [-] (0, -0.2) -- (0,0) to[out=90,in=270] (0.6,0.6) to[out=90,in=270] (0, 1.2) to[out=90,in=270] (0.6, 1.8) -- (0.6, 2);
	\draw [-] (0.6, -0.2) -- (0.6,0) to[out=90,in=270] (0,0.6) to[out=90,in=270] (0.6,1.2) to[out=90,in=270] (0,1.8) -- (0, 2);
	\adot{(0.55,0.75)};
	\end{tikzpicture} \ = \begin{tikzpicture}[anchorbase]
	\draw[-] (0.5, -0.8) -- (0.5, -0.6) to[out=90,in=270] (0, 0) -- (0, 0.6) to[out=90,in=270] (0.5, 1.2) -- (0.5, 1.4);
	\draw[-] (0, -0.8) -- (0, -0.6) to[out=90,in=270] (0.5, 0) -- (0.5, 0.6) to[out=90, in=270] (0, 1.2) -- (0, 1.4);
	\teleportUU{0, 0.3}{0.5, 0.3};
	\end{tikzpicture} - \begin{tikzpicture}[anchorbase]
	\draw[-] (0.5, -0.325) -- (0.5, -0.2) to[out=90,in=270] (0, 0.4) arc(180:0:0.25) to[out=270,in=90] (0, -0.2) -- (0, -0.325);
	\draw[-] (0.5, 1.875) -- (0.5, 1.75) to[out=270,in=90] (0, 1.15) arc(180:360:0.25) to[out=90, in=270] (0, 1.75) -- (0, 1.875);
	\teleportRR{0.25, 0.65}{0.25, 0.9};
	\end{tikzpicture}\ .
	\end{equation*}
	The first relation from \eqref{rel:mirroredDotSlide} follows after sliding the teleporter endpoints through crossings, using the first relation from \eqref{rel:brauer} several times, the fourth relation from \eqref{rel:brauer} and the first relation from \eqref{rel:brauerBasics} on the last diagram, and reversing the orientation of both teleporter endpoints in the last diagram. For the second relation, we compute:
	\begin{equation*}
		\begin{tikzpicture}[anchorbase]
		\draw[-] (0, 0.5) -- (0, 0.25) arc(180:360:0.25) -- (0.5, 0.5);
		\adot{(0, 0.35)};
		\end{tikzpicture}
		\quad \overset{\eqref{rel:brauer}}{=} \quad 
		\begin{tikzpicture}[anchorbase]
		\draw[-] (-1, 1.5) -- (-1, 0.75) arc(180:360:0.25) -- (-0.5, 0.75) arc(180:0:0.25) -- (0, 0.75) -- (0, 0.25) arc(180:360:0.25) -- (0.5, 1.25);
		\adot{(0, 0.35)};
		\end{tikzpicture}
		\quad \underset{\eqref{eq:superInterchangeLaw}}{\overset{\eqref{rel:dotCapSlide}}{=}} \quad  
		-\ \begin{tikzpicture}[anchorbase]
		\draw[-] (-1, 1.25) -- (-1, 0.5) arc(180:360:0.25) -- (-0.5, 0.75) arc(180:0:0.25) -- (0, 0.75) -- (0, 0.25) arc(180:360:0.25) -- (0.5, 1.25);
		\adot{(-0.5, 0.6)};
		\end{tikzpicture}
		\quad
		\overset{\eqref{eq:superInterchangeLaw}}{=} \quad
		-\ \begin{tikzpicture}[anchorbase, xscale=-1]
		\draw[-] (-1, 1.5) -- (-1, 0.75) arc(180:360:0.25) -- (-0.5, 0.75) arc(180:0:0.25) -- (0, 0.75) -- (0, 0.25) arc(180:360:0.25) -- (0.5, 1.25);
		\adot{(0, 0.35)};
		\end{tikzpicture}
		\quad \overset{\eqref{rel:brauer}}{=} \quad
		-\ \begin{tikzpicture}[anchorbase]
		\draw[-] (0, 0.5) -- (0, 0.25) arc(180:360:0.25) -- (0.5, 0.5);
		\adot{(0.5, 0.35)};
		\end{tikzpicture}. \qedhere
	\end{equation*}
\end{proof}

Recall that $\varphi$ denotes a unimodular $(\nu, -^\star)$-superhermitian form on $V = A^{m \mid n}$, $\Phi = \tr \circ\,\varphi$ is the corresponding nondegenerate $(\nu, -^\star)$-supersymmetric form, and $\g = \osp(\varphi)$. In the following, left duals for the basis $B_\g$ of $\g$ are taken with respect to the nondegenerate bilinear form given by $(X, Y) \mapsto \tr(\str_\varphi(XY))$, as discussed in Lemma \ref{lem:strNondegenerate2}.

\begin{defin} \label{def:omega}
	We define the \emph{quadratic Casimir elements} $\Omega = \summ_{X \in B_\g} X \otimes X^\vee \in \g \otimes \g$ and $C = \summ_{X \in B_\g} XX^\vee \in U(\g)$, where $U(\g)$ denotes the universal enveloping algebra of $\g$.
\end{defin}

Note that $\Omega$ and $C$ are even. A quick calculation shows that they are both independent of the choice of basis $B_\g$.

The proof of the following theorem will occupy the remainder of this section.

\begin{theo} \label{thm:affineFunctor}
	There exists a monoidal superfunctor $\hat{F}_\Phi \colon \AB(A, -^\star) \to \End(\g\dashsmod)$ such that $\hat{F}_\Phi(\go) = V \otimes -$,
	\begin{align*}&\hat{F}_\Phi(f) = F_\Phi(f) \otimes - \quad \text{ for all } f \in \{\begin{tikzpicture}[anchorbase]
	\draw[-] (0, 0) -- (0.4, 0.4);
	\draw[-] (0.4, 0) -- (0, 0.4);
	\end{tikzpicture}, \begin{tikzpicture}[anchorbase]
	\draw[-] (0, 0) -- (0, 0.2) arc(180:0:0.2) -- (0.4, 0);
	\end{tikzpicture}, \begin{tikzpicture}[anchorbase, yscale=-1]
	\draw[-] (0, 0) -- (0, 0.2) arc(180:0:0.2) -- (0.4, 0);
	\end{tikzpicture}, \begin{tikzpicture}[anchorbase]
	\draw[-] (0, 0) -- (0, 0.4);
	\token{0, 0.2}{west}{a};
	\end{tikzpicture} : a \in A\},\\
	&\hat{F}_\Phi\left(\begin{tikzpicture}[anchorbase]
	\draw[-] (0, 0) -- (0, 0.4);
	\adot{(0, 0.2)};
	\end{tikzpicture}\right) = \nu\left(\Delta(C) - 1 \otimes C\right).
	\end{align*}
	Here, $F_\Phi$ is the incarnation functor of Proposition \ref{prop:incarnation}, $\Delta \colon U(\g) \to U(\g) \otimes U(\g)$ is the usual comultiplication map, and $\Delta(C) - 1 \otimes C$ denotes the supernatural transformation from $V \otimes -$ to $V \otimes -$ with components given by $$v \otimes w \mapsto (\Delta(C) - 1 \otimes C)(v \otimes w).$$
\end{theo}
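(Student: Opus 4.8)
The plan is to build $\hat F_\Phi$ in three stages — the non-affine generators, then the affine dot, then verification of the three new relations \eqref{rel:dotCrossingSlide}, \eqref{rel:dotCapSlide}, \eqref{rel:dotTokenCommuting} — with essentially all of the difficulty concentrated in \eqref{rel:dotCrossingSlide}. For the non-affine part, note that there is a canonical monoidal superfunctor $G\colon \g\text{-smod} \to \End(\g\text{-smod})$ sending a module $N$ to the endofunctor $N \otimes -$ and a morphism $f$ to the even supernatural transformation $f \otimes -$; it is monoidal because $(N_1 \otimes -)\circ(N_2 \otimes -) = (N_1 \otimes N_2) \otimes -$. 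The composite $G \circ F_\Phi$ with the incarnation superfunctor of Proposition~\ref{prop:incarnation} is then a monoidal superfunctor $\B^\sigma(A,-^\star) \to \End(\g\text{-smod})$ taking exactly the prescribed values on $\go$, the crossing, the cup, the cap, and the tokens; since a composite of monoidal superfunctors is again one, all the relations of Definition~\ref{def:frobeniusBrauerCategory} are automatically respected, and it remains only to adjoin the dot and check the three new relations.

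For the dot, I would set $\hat F_\Phi$ on it equal to $\nu(\Delta(C) - 1 \otimes C)$ and first check this is a legitimate even morphism $\go \to \go$: each component lies in $U(\g)^{\otimes 2}$ acting on $V \otimes M$, so naturality in $M$ is automatic and the parity is even, while the components are $\g$-module maps because $C$ is supercentral in $U(\g)$ (the standard fact about the quadratic Casimir), so $\Delta(C)$ commutes with $\Delta(U(\g))$ and $1 \otimes C$ commutes with $\Delta(\g)$ (trivially in the first tensor factor, by centrality in the second). Relation \eqref{rel:dotTokenCommuting} is then immediate: the containment $\g \subseteq \End_A(V)$ forces $F_\Phi$ of a token to supercommute with the $\g$-action on $V$, so $F_\Phi(\text{token})\otimes\id_M$ supercommutes with $\Delta(C)$ and with $1 \otimes C$. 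For \eqref{rel:dotCapSlide}, I would first record the coproduct identity $\Delta(C) = C \otimes 1 + 1 \otimes C + 2\,\Omega$ with $\Omega = \sum_{X \in B_\g} X \otimes X^\vee$ as in Definition~\ref{def:omega}, obtained by expanding $\Delta(X)\Delta(X^\vee)$ and using the $\g$-analogue of \eqref{eq:doubleDual} (valid since $\tr\circ\str$ is supersymmetric on $\g$) to identify $\sum_X(-1)^{\bar X}X^\vee\otimes X$ with $\Omega$; the relation then follows from the fact that $F_\Phi$ of the cap is a $\g$-module map into the trivial module — so it kills $\Delta(C)$ acting on $V \otimes V$ and carries a factor of $\g$ from the left strand to the right one up to a sign — together with $\sum_X(-1)^{\bar X}X^\vee X = C$ to settle the overall sign.

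The crossing-slide relation \eqref{rel:dotCrossingSlide} is the crux. Using $\Delta(C) = C\otimes 1 + 1 \otimes C + 2\Omega$ and the iterated coproduct, I would compute on $V \otimes V \otimes M$ that $\hat F_\Phi(\text{dot}\otimes\id_\go)$ acts by $\nu\big(C_{(1)} + 2\Omega_{(12)} + 2\Omega_{(13)}\big)$ — the dot on the first strand links it to the second strand \emph{and} to $M$ — while $\hat F_\Phi(\id_\go\otimes\text{dot})$ acts by $\nu\big(C_{(2)} + 2\Omega_{(23)}\big)$, linking the second strand only to $M$. Since $F_\Phi$ of the crossing is a Koszul swap of the first two factors, conjugation by it sends $C_{(2)} \mapsto C_{(1)}$ and $\Omega_{(23)} \mapsto \Omega_{(13)}$, so the difference of the two sides of \eqref{rel:dotCrossingSlide} collapses to $2\nu\,(\Omega|_{V\otimes V})\circ F_\Phi(\text{crossing})$ tensored with $\id_M$. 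Thus \eqref{rel:dotCrossingSlide} reduces to the single identity
\begin{equation*}
2\nu\,(\Omega|_{V\otimes V})\circ F_\Phi(\text{crossing}) \;=\; F_\Phi(\text{teleporter}) - (-1)^\sigma\, E
\end{equation*}
in $\End_\g(V \otimes V)$, where $E$ is $F_\Phi$ of the cap--reflecting-teleporter--cup diagram on the right-hand side of \eqref{rel:dotCrossingSlide}; this is an explicit description of the restriction of the $\osp(\varphi)$-Casimir tensor to $V \otimes V$.

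I expect this last identity to be the main obstacle. To prove it I would fix a homogeneous basis of $\g = \osp(\varphi) \subseteq \End_A(A^{m\mid n})$ obtained by $\varphi$-skew-symmetrising the products of the matrix units of $V$ with the basis $B_A$ of $A$, compute the dual basis with respect to $\tr \circ \str$, evaluate $\sum_{X \in B_\g} X \otimes X^\vee$ on a pair of standard basis vectors of $V$, and match the resulting expression termwise against the incarnation formulas of Proposition~\ref{prop:incarnation} for the crossing, the cup, the cap, and the tokens. The computation is long but mechanical; its only genuine difficulty is tracking the Koszul signs alongside the interplay of the involution $-^\star$, the parameter $\nu$, and the Frobenius form, and it is the Frobenius-superalgebra analogue of the classical fact that the $\mathfrak{so}/\mathfrak{sp}$-Casimir tensor on $V \otimes V$ is a linear combination of the identity, the flip, and the contraction map. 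Once this is in hand, $\hat F_\Phi$ respects all defining relations of $\AB^\sigma(A, -^\star)$, and (as a byproduct, using Proposition~\ref{prop:incarnation}) descends to the specialized category $\AB^\sigma(A, -^\star, \nu(m-n))$.
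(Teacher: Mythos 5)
Your reductions track the paper's proof exactly up to the final step: the formula $\Delta(C) = C\otimes 1 + 1\otimes C + 2\Omega$, the identification $\hat F_\Phi(\text{dot}\otimes\id) = \nu(C_{(1)}+2\Omega_{(12)}+2\Omega_{(13)})$ and $\hat F_\Phi(\id\otimes\text{dot}) = \nu(C_{(2)}+2\Omega_{(23)})$, and the collapse of \eqref{rel:dotCrossingSlide} to $2\nu\,\Omega_{(12)}$ (after stripping the crossing) are all precisely what the paper computes. Your arguments for \eqref{rel:dotTokenCommuting} (right $A$-multiplication commutes with $\g\subseteq\End_A(V)$) and \eqref{rel:dotCapSlide} ($\g$-invariance of $\Phi$ kills $\Delta(C)_{(12)}$ and $\Omega_{(13)}+\Omega_{(23)}$) are in fact more conceptual than the paper's direct element-level computations, and both are correct. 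Where you diverge is the crux identity: you propose to fix an explicit basis of $\g=\osp(\varphi)$ by skew-symmetrising matrix units, compute its $\tr\circ\str$-dual, and evaluate $\sum_X X\otimes X^\vee$ termwise. That is possible in principle, but it is left unexecuted, and it is substantially heavier than what the paper actually does: the paper never touches a basis of $\g$ at all. Instead it observes that $\End_A(V)$ is the direct sum of the $\pm 1$-eigenspaces of the involution $-^\dagger$, with $\g$ being the $(-1)$-eigenspace, which gives the basis-free decomposition $2\Omega = \sum_{X\in B_{\End}}X\otimes X^\vee - \sum_{X\in B_{\End}}X^\dagger\otimes X^\vee$ over any basis $B_{\End}$ of $\End_A(V) = \Mat_{m|n}(A)$; the first sum is then handled by a known oriented-case identity (producing the crossing-composed-with-teleporter term), and the second is evaluated directly via the adjoint property of $-^\dagger$ and the definition of $\Phi$ (producing the cap--cup--teleporter term). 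Adopting this $\dagger$-eigenspace trick would replace your ``long but mechanical'' computation (and its attendant pitfalls: the skew-symmetrised matrix units satisfy linear dependencies, and computing a dual basis for a non-orthogonal spanning set is not straightforward) with two short lemmas. Apart from this one step your proposal is sound, and your reduction of \eqref{rel:dotCrossingSlide} is logically equivalent to the paper's ``adjoin a crossing on top'' manoeuvre.
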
 
Throughout the rest of the paper, we will identify elements of tensor powers of $U(\g)$ and their associated supernatural transformations in the same way we did with $\Delta(C) - 1 \otimes C$ above.

A calculation analogous to the proof of Proposition \ref{prop:teleporters} shows that the action of $C$ intertwines the action of $\g$ on $V$. It quickly follows that the image of the affine dot is indeed an even supernatural transformation. Another direct computation shows that 
	\begin{equation}
		\Delta(C) - 1 \otimes C = C \otimes 1 + 2\Omega.
	\end{equation}

\begin{rem} \label{rem:oddCase}
	In \cite[Thm.~10.1]{diagrammaticsRealSupergroups}, the incarnation functor $F_\Phi$ of Proposition \ref{prop:incarnation} was shown to exist even when the nondegenerate supersymmetric form $\Phi$ is odd. In the affine case, it is not straightforward to generalize to odd forms. Briefly, supposing that $\bar{\Phi} = 1$, the duals $X^\vee$ appearing in Definition \ref{def:omega} satisfy $\overline{X^\vee} = \bar{X} + 1$, and hence $C$ is odd. As the action of $C$ supercommutes with the action of $U(\g)$, we get that $C^2$ acts as 0. Schur's Lemma then tells us that $C$ acts on each irreducible supermodule as 0. Thus we restrict our attention to even forms in this paper.
\end{rem}

We now proceed towards proving Theorem \ref{thm:affineFunctor}. Consider the functor $R \colon \g\dashsmod \to \End(\g\dashsmod)$ given on objects by $X \mapsto X \otimes -$ and on morphisms by $f \mapsto f \otimes -$. The composite map ${R \circ F_{\Phi} \colon \B(A, -^\star) \to \End(\g\dashsmod)}$ is a functor whose action on the generating morphisms $\begin{tikzpicture}[anchorbase]
\draw[-] (0, 0) -- (0.4, 0.4);
\draw[-] (0.4, 0) -- (0, 0.4);
\end{tikzpicture}\,{,}\ \begin{tikzpicture}[anchorbase]
\draw[-] (0, 0) -- (0, 0.2) arc(180:0:0.2) -- (0.4, 0);
\end{tikzpicture}\,{,}$ $\begin{tikzpicture}[anchorbase, yscale=-1]
\draw[-] (0, 0) -- (0, 0.2) arc(180:0:0.2) -- (0.4, 0);
\end{tikzpicture}\,{,}$ and $\begin{tikzpicture}[anchorbase]
\draw[-] (0, 0) -- (0, 0.4);
\token{0, 0.2}{west}{a};
\end{tikzpicture}$ coincides with that of $\hat{F_\Phi}$. This implies that $\hat{F_\Phi}$ preserves all of the defining relations for $\B(A, -^\star)$, i.e.\ the defining relations for $\AB(A, -^\star)$ that do not involve the affine dot. As such, all that remains is to show that the relations involving the affine dot, namely \eqref{rel:dotCrossingSlide}, \eqref{rel:dotCapSlide}, and \eqref{rel:dotTokenCommuting}, are also preserved by $\hat{F_\Phi}$.

\begin{defin} \label{def:subOmegas}
	Define $$\Omega^{12} = \summ_{X \in B_\g} X \otimes X^\vee \otimes 1, \quad \Omega^{13} = \summ_{X \in B_\g}X \otimes 1 \otimes X^\vee, \quad \Omega^{23} = \summ_{X \in B_\g} 1 \otimes X \otimes X^\vee.$$
\end{defin}

\begin{lem}
	We have 
	\begin{equation} \label{eq:leftDot}
	\hat{F}_\Phi\left(\begin{tikzpicture}[anchorbase]
	\draw[-] (0, 0) -- (0, 0.6);
	\draw[-] (0.4, 0) -- (0.4, 0.6);
	\adot{(0, 0.3)};
	\end{tikzpicture} \right) =\nu\left( C \otimes 1 \otimes 1 + 2( \Omega^{12} + \Omega^{13})\right),
	\end{equation}
	\begin{equation} \label{eq:rightDot}
	\hat{F}_\Phi\left(\begin{tikzpicture}[anchorbase]
	\draw[-] (0, 0) -- (0, 0.6);
	\draw[-] (0.4, 0) -- (0.4, 0.6);
	\adot{(0.4, 0.3)};
	\end{tikzpicture} \right) = \nu\left(1 \otimes C \otimes 1 + 2\Omega^{23}\right).
	\end{equation}
\end{lem}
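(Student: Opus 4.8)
The plan is to deduce both formulas directly from the monoidality of $\hat F_\Phi$ and the value of $\hat F_\Phi$ on the single affine dot, so that essentially no new computation is required. The point is that on two strands the left dot is $\begin{tikzpicture}[anchorbase]\draw[-] (0, 0) -- (0, 0.4);\adot{(0, 0.2)};\end{tikzpicture}\otimes\id_\go$ and the right dot is $\id_\go\otimes\begin{tikzpicture}[anchorbase]\draw[-] (0, 0) -- (0, 0.4);\adot{(0, 0.2)};\end{tikzpicture}$. Since $\hat F_\Phi$ is a strict monoidal superfunctor, the left-hand side of \eqref{eq:leftDot} equals the horizontal composite $\hat F_\Phi\bigl(\begin{tikzpicture}[anchorbase]\draw[-] (0, 0) -- (0, 0.4);\adot{(0, 0.2)};\end{tikzpicture}\bigr)\otimes\id_{V\otimes-}$ in $\End(\g\text{-smod})$, and the left-hand side of \eqref{eq:rightDot} equals $\id_{V\otimes-}\otimes\hat F_\Phi\bigl(\begin{tikzpicture}[anchorbase]\draw[-] (0, 0) -- (0, 0.4);\adot{(0, 0.2)};\end{tikzpicture}\bigr)$. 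By Theorem \ref{thm:affineFunctor} together with the identity $\Delta(C)-1\otimes C = C\otimes 1+2\Omega$, we have $\hat F_\Phi\bigl(\begin{tikzpicture}[anchorbase]\draw[-] (0, 0) -- (0, 0.4);\adot{(0, 0.2)};\end{tikzpicture}\bigr)=\nu(C\otimes 1+2\Omega)$, whose component at a $\g$-supermodule $M$ is the endomorphism of $V\otimes M$ in which the first tensor leg acts on $V$ and the second on $M$; as this transformation is even, the horizontal composites below carry no Koszul signs.

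For \eqref{eq:leftDot}: the component of $\hat F_\Phi\bigl(\begin{tikzpicture}[anchorbase]\draw[-] (0, 0) -- (0, 0.4);\adot{(0, 0.2)};\end{tikzpicture}\bigr)\otimes\id_{V\otimes-}$ at a $\g$-supermodule $W$ is the component of $\hat F_\Phi\bigl(\begin{tikzpicture}[anchorbase]\draw[-] (0, 0) -- (0, 0.4);\adot{(0, 0.2)};\end{tikzpicture}\bigr)$ at the $\g$-supermodule $V\otimes W$, i.e.\ the endomorphism of $V\otimes(V\otimes W)$ obtained by acting with $\nu(C\otimes 1+2\Omega)$, the first leg on the outer copy of $V$ and the second leg on $V\otimes W$. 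Since $\g$ acts on the tensor product $V\otimes W$ through the comultiplication $\Delta$, and the second leg of $C\otimes 1$ is $1$ (with $\Delta(1)=1\otimes 1$) while the second legs $X^\vee\in\g$ of $\Omega$ are primitive (with $\Delta(X^\vee)=X^\vee\otimes 1+1\otimes X^\vee$), the term $C\otimes 1$ becomes $C\otimes 1\otimes 1$ and the term $2\Omega=2\summ_{X\in B_\g}X\otimes X^\vee$ becomes $2\summ_{X\in B_\g}X\otimes(X^\vee\otimes 1+1\otimes X^\vee)=2(\Omega^{12}+\Omega^{13})$. Adding these gives \eqref{eq:leftDot}.

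For \eqref{eq:rightDot}: the component of $\id_{V\otimes-}\otimes\hat F_\Phi\bigl(\begin{tikzpicture}[anchorbase]\draw[-] (0, 0) -- (0, 0.4);\adot{(0, 0.2)};\end{tikzpicture}\bigr)$ at $W$ is $\id_V\otimes\bigl(\hat F_\Phi(\begin{tikzpicture}[anchorbase]\draw[-] (0, 0) -- (0, 0.4);\adot{(0, 0.2)};\end{tikzpicture})\bigr)_W$, that is, the endomorphism of $V\otimes(V\otimes W)$ acting as the identity on the outer copy of $V$ and as $\nu(C\otimes 1+2\Omega)$ on the inner $V\otimes W$, with the first leg on $V$ and the second on $W$. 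This is $\nu\bigl(1\otimes C\otimes 1+2\summ_{X\in B_\g}1\otimes X\otimes X^\vee\bigr)=\nu(1\otimes C\otimes 1+2\Omega^{23})$, which is \eqref{eq:rightDot}.

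The computation is essentially bookkeeping, and the one place that deserves attention is the contrast between the two cases: for the left dot the affine dot regards $V\otimes W$ as a $\g$-supermodule, so its $\Omega$-term is pushed through the coproduct and splits into $\Omega^{12}+\Omega^{13}$, whereas for the right dot the affine dot acts entirely inside the single module $W$ and no further comultiplication is invoked. The other thing to keep straight is which copy of $V$ is the ``outer'' one, which is dictated by the monoidal-structure convention on $\End(\g\text{-smod})$; since the affine dot's image is even, no sign subtleties intervene in this identification.
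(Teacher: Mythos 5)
Your proposal is correct and takes essentially the same approach as the paper: both proofs evaluate the component at a supermodule $W$, observe that for the left dot the second tensor leg of $\nu(C\otimes 1+2\Omega)$ acts on the $\g$-supermodule $V\otimes W$ via the coproduct (so the $\Omega$-term splits into $\Omega^{12}+\Omega^{13}$), while for the right dot the affine dot acts inside $V\otimes W$ directly and no further splitting occurs. The only cosmetic difference is that you phrase the splitting in terms of primitivity of $\Delta(X^\vee)$, whereas the paper unwinds the same fact through an explicit element-wise computation with Koszul signs; your remark that no signs appear because the affine dot's image is even is correct and matches what the paper leaves implicit.
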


\begin{proof}
	Let $W$ be a left $\g$-supermodule, and $u, v \in V, w \in W$. We have that $\hat{F}_\Phi\left(\begin{tikzpicture}[anchorbase]
	\draw[-] (0, 0) -- (0, 0.6);
	\draw[-] (0.4, 0) -- (0.4, 0.6);
	\adot{(0, 0.3)};
	\end{tikzpicture} \right)_W$ acts as $u \otimes v \otimes w \mapsto \nu(C \otimes 1 + 2\Omega)(u \otimes (v \otimes w))$, and
	\begin{align*}
	&\hspace{-55px} (C \otimes 1 + 2\Omega)(u \otimes (v \otimes w))\\
	&= Cu \otimes v \otimes w + 2 \summ_{X \in B_\g} (-1)^{\bar{X}\bar{u}} Xu \otimes X^\vee(v \otimes w)\\
	&= Cu \otimes v \otimes w + 2\summ_{X \in B_\g} (-1)^{\bar{X}\bar{u}}Xu \otimes X^\vee v \otimes w + (-1)^{\bar{X} \bar{u} + \bar{X} \bar{v}}Xu \otimes v \otimes X^\vee w\\
	&= \left(C \otimes 1 \otimes 1 + 2\summ_{X \in B_\g} X \otimes X^\vee \otimes 1 + 2\summ_{X \in B_\g} X \otimes 1 \otimes X^\vee\right)(u \otimes v \otimes w)\\
	&= (C \otimes 1 \otimes 1 + 2(\Omega^{12} +  \Omega^{13}))(u \otimes v \otimes w).
	\end{align*}
	For the second claim, we have that $\hat{F}_\Phi\left(\begin{tikzpicture}[anchorbase]
	\draw[-] (0, 0) -- (0, 0.6);
	\draw[-] (0.4, 0) -- (0.4, 0.6);
	\adot{(0.4, 0.3)};
	\end{tikzpicture} \right)_W$ is equal to $$\id_{V \otimes V \otimes W} \circ \nu(\id_V \otimes (C \otimes 1 + 2\Omega)) = \nu\left(1 \otimes C \otimes 1 + 2\Omega^{23}\right),$$
	as desired.
\end{proof}

\begin{lem}
	For all $u, v \in V$, we have 
	\begin{equation} \label{eq:transferC}
	\Phi(Cu, v) = \Phi(u, Cv).
	\end{equation}
\end{lem}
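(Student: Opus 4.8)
The plan is to unfold $C = \sum_{X \in B_\g} XX^\vee$ and push each factor across $\Phi$ using skew-adjointness of the $\g$-action. Recall that applying $\tr$ to the characterization $\osp(\varphi) = \{X \mid \varphi(Xv,w) = -(-1)^{\bar X\bar v}\varphi(v,Xw)\}$ shows that $\Phi(Xv, w) = -(-1)^{\bar X \bar v}\Phi(v, Xw)$ for all homogeneous $X \in \g$ and $v, w \in V$; that is, every homogeneous $X \in \g$ is graded skew-adjoint for $\Phi$.

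First I would fix homogeneous $u, v \in V$ and a homogeneous $X \in B_\g$ and apply skew-adjointness twice, using that $\overline{X^\vee} = \bar X$ (the form $\tr \circ \str$ defining the dual basis is even, so dual bases preserve parity). This gives
\begin{equation*}
\Phi(XX^\vee u, v) = -(-1)^{\bar X(\bar X + \bar u)}\Phi(X^\vee u, Xv) = (-1)^{\bar X(\bar X + \bar u)}(-1)^{\bar X\bar u}\Phi(u, X^\vee Xv) = (-1)^{\bar X}\Phi(u, X^\vee Xv),
\end{equation*}
the last step because $\bar X\,\bar X = \bar X$ and $2\bar X\bar u \equiv 0$ in $\Z_2$. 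Summing over $B_\g$ yields $\Phi(Cu, v) = \Phi\bigl(u, \bigl(\sum_{X \in B_\g} (-1)^{\bar X} X^\vee X\bigr)v\bigr)$, so the whole statement reduces to the identity $\sum_{X \in B_\g} (-1)^{\bar X}X^\vee X = C$ in $U(\g)$.

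For that identity, note the bilinear form $\tr \circ \str$ on $\g$ is even, supersymmetric, and nondegenerate, so the same reasoning that proves \eqref{eq:doubleDual} gives $(X^\vee)^\vee = (-1)^{\bar X}X$ for $X \in B_\g$, where the outer dual is taken relative to $B_\g^\vee$. Reindexing via $Z := X^\vee$ (so that $X = (-1)^{\bar Z}Z^\vee$) turns $\sum_{X \in B_\g}(-1)^{\bar X}X^\vee X$ into $\sum_{Z \in B_\g^\vee} ZZ^\vee$, which is $C$ by the basis-independence of the Casimir noted in Definition \ref{def:omega}; equivalently, this is just the symmetry of the Casimir tensor $\Omega$ under the graded flip $a \otimes b \mapsto (-1)^{\bar a\bar b}b \otimes a$ followed by multiplication. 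The only real bookkeeping here is the sign tracking in the two applications of skew-adjointness, so I do not anticipate any genuine obstacle.
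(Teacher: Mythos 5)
Your proof is correct and takes essentially the same route as the paper's. The paper moves $XX^\vee$ across $\Phi$ in one step via \eqref{def:dagger} (the $\dagger$-adjoint) and the relation $X^\dagger = -X$ from \eqref{def:osp}, then uses \eqref{eq:doubleDual} and a switch to the dual basis; you instead apply the equivalent skew-adjointness identity $\Phi(Xv,w) = -(-1)^{\bar X\bar v}\Phi(v,Xw)$ twice, and your concluding reindexing via $Z := X^\vee$ is exactly the paper's ``switch to a sum over the dual basis'' step. The sign bookkeeping in both places is right.
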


\begin{proof}
	\begin{multline*}
	\Phi(Cu, v)
	=
	\summ_{X \in B_\g} \Phi(XX^\vee u, v)
	{\overset{\eqref{def:dagger}}{=}}
	\summ_{X \in B_\g}(-1)^{\bar{X}}\Phi(u, (X^\vee)^\dagger X^\dagger v)
	\overset{\eqref{def:osp}}{=}
	\summ_{X \in B_\g} (-1)^{\bar{X}} \Phi(u, X^\vee X v)
	\\
	\overset{\eqref{eq:doubleDual}}{=}
	\summ_{X \in B_\g} \Phi(u, X^\vee (X^\vee)^\vee v)
	=
	\summ_{X \in B_\g} \Phi(u, XX^\vee v)
	=
	\Phi(u, Cv),
	\end{multline*}
	where we switch to a sum over the dual basis in the second-to-last equality.
\end{proof}

\begin{prop} \label{prop:dotCapSlide}
	We have $\hat{F}_\Phi\left(\begin{tikzpicture}[anchorbase]
	\draw[-] (0, 0) -- (0, 0.3) arc(180:0:0.3) -- (0.6, 0);
	\adot{(0, 0.2)}; 
	\end{tikzpicture}\right)
	\ =\
	- \hat{F}_\Phi\left(
	\begin{tikzpicture}[anchorbase]
	\draw[-] (0, 0) -- (0, 0.3) arc(180:0:0.3) -- (0.6, 0);
	\adot{(0.6, 0.2)}; 
	\end{tikzpicture}\right)$.
\end{prop}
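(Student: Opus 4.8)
The plan is to check that $\hat{F}_\Phi$ respects relation \eqref{rel:dotCapSlide}. I would evaluate the image under $\hat{F}_\Phi$ of each side of \eqref{rel:dotCapSlide} on a generic element $u \otimes v \otimes w$, where $u, v \in V$ and $w$ lies in an arbitrary left $\g$-supermodule $W$, and then reduce the resulting equality of linear maps to \eqref{eq:transferC} together with the $\osp(\varphi)$-invariance identity
$$\Phi(Xu, v) = -(-1)^{\bar{X}\bar{u}}\,\Phi(u, Xv) \qquad (X \in \g,\ u, v \in V),$$
which follows by applying $\tr$ to the alternative description of $\osp(\varphi)$ recorded just after \eqref{def:osp}.

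First I would unwind the two sides. The morphism on the left of \eqref{rel:dotCapSlide} is the morphism whose image under $\hat{F}_\Phi$ is computed in \eqref{eq:leftDot}, followed by the cap generator; since $\hat{F}_\Phi$ sends the cap to the supernatural transformation with components $u' \otimes v' \otimes w' \mapsto \Phi(u', v')\,w'$ (by Proposition \ref{prop:incarnation}), combining this with \eqref{eq:leftDot} shows that $\hat{F}_\Phi$ of the left-hand side of \eqref{rel:dotCapSlide} acts by
\begin{equation*}
u \otimes v \otimes w \longmapsto \nu\Big( \Phi(Cu, v)\,w + 2\summ_{X \in B_\g} (-1)^{\bar{X}\bar{u}}\Phi(Xu, X^\vee v)\,w + 2\summ_{X \in B_\g} (-1)^{\bar{X}\bar{u} + \bar{X}\bar{v}}\Phi(Xu, v)\,X^\vee w \Big).
\end{equation*}
Using \eqref{eq:rightDot} in place of \eqref{eq:leftDot} in exactly the same way, $\hat{F}_\Phi$ of the right-hand side of \eqref{rel:dotCapSlide} acts by
\begin{equation*}
u \otimes v \otimes w \longmapsto \nu\Big( \Phi(u, Cv)\,w + 2\summ_{X \in B_\g} (-1)^{\bar{X}\bar{v}}\Phi(u, Xv)\,X^\vee w \Big).
\end{equation*}
It then suffices to show the first expression is $-1$ times the second.

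I would compare the two types of terms separately. For the summands in which $X^\vee$ acts on $w$, the $\osp(\varphi)$-invariance identity, applied to each $X \in B_\g \subseteq \g$, rewrites $(-1)^{\bar{X}\bar{u}+\bar{X}\bar{v}}\Phi(Xu, v)$ as $-(-1)^{\bar{X}\bar{v}}\Phi(u, Xv)$, so the third summand of the first expression is exactly $-1$ times the second summand of the second expression. For the remaining scalar-valued terms, I would use \eqref{eq:transferC} to replace $\Phi(Cu, v)$ by $\Phi(u, Cv)$, then apply the $\osp(\varphi)$-invariance identity in the form $\Phi(Xu, X^\vee v) = -(-1)^{\bar{X}\bar{u}}\Phi(u, XX^\vee v)$ and sum over $B_\g$; the definition of $C$ yields $\summ_{X \in B_\g}(-1)^{\bar{X}\bar{u}}\Phi(Xu, X^\vee v) = -\Phi(u, Cv)$. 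Substituting, the scalar part of the first expression collapses to $\nu\big(\Phi(u, Cv) - 2\Phi(u, Cv)\big)w = -\nu\,\Phi(u, Cv)\,w$, which is $-1$ times the scalar part of the second expression; combined with the $X^\vee w$ terms, this proves the proposition.

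The only real work is keeping track of the Koszul signs that arise when the basis elements $X$ and their duals $X^\vee$ are moved past the tensor factors $u$ and $v$, and of the signs built into the $\osp(\varphi)$-invariance identity; these are organized in precisely the same manner as in the proof of the lemma giving \eqref{eq:leftDot} and \eqref{eq:rightDot}, so the main obstacle is bookkeeping rather than anything conceptual.
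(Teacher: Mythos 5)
Your proposal is correct and essentially reproduces the paper's proof. The paper works from the left-hand side down to the right-hand side in one chain of equalities, citing \eqref{def:dagger} and \eqref{def:osp} directly where you invoke the $\osp(\varphi)$-invariance identity $\Phi(Xu,v) = -(-1)^{\bar X\bar u}\Phi(u,Xv)$ derived from the alternative description of $\osp(\varphi)$; these are the same fact, and both proofs hinge on \eqref{eq:leftDot}, \eqref{eq:rightDot}, \eqref{eq:transferC}, and this invariance, applied to the same three families of terms.
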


\begin{proof}
	Let $W$ be a left $\g$-supermodule and $u, v \in V, w \in W$. We compute:
	\begin{align*}
	&\hspace{-40px} \hat{F}_\Phi\left(\begin{tikzpicture}[anchorbase]
	\draw[-] (0, 0) -- (0, 0.3) arc(180:0:0.3) -- (0.6, 0);
	\adot{(0, 0.2)}; 
	\end{tikzpicture}\right)_W(u \otimes v \otimes w)
	\\
	&\overset{\mathclap{\eqref{eq:leftDot}}}{=}\ \,\nu\left( F_\Phi\left(\begin{tikzpicture}[anchorbase]
	\draw[-] (0, 0) -- (0, 0.3) arc(180:0:0.3) -- (0.6, 0);
	\end{tikzpicture}\right) \otimes \id_W \right) \left(C \otimes 1 \otimes 1 + 2(\Omega^{12} + \Omega^{13}) \right)(u \otimes v \otimes w)\\
	&= \nu\Phi(Cu, v)w + 2\nu\left(\summ_{X \in B_\g} (-1)^{\bar{X}\bar{u}}\Phi(Xu, X^\vee v)w + \summ_{X \in B_\g}(-1)^{\bar{X}(\bar{u} + \bar{v})}\Phi(Xu, v)X^\vee w\right)\\
	&\underset{\mathclap{\eqref{def:osp}}}{\overset{\mathclap{\eqref{def:dagger}}}{=}}\ \,\nu\Phi(Cu, v)w - 2\nu\left(\summ_{X \in B_\g} \Phi(u, XX^\vee v)w + \summ_{X \in B_\g}(-1)^{\bar{X}\bar{v}}\Phi(u, Xv)X^\vee w\right)\\
	&\overset{\mathclap{\eqref{eq:transferC}}}{=}\ \,\nu\Phi(u, Cv)w - 2\nu\left(\Phi(u, Cv)w + \summ_{X \in B_\g} (-1)^{\bar{X}\bar{v}}\Phi(u, Xv)X^\vee w\right)\\
	&= -\nu\Phi(u, Cv)w - 2\nu\summ_{X\in B_\g} (-1)^{\bar{X}\bar{v}}\Phi(u, Xv)X^\vee w\\
	&= -\nu\left( F_\Phi\left(\begin{tikzpicture}[anchorbase]
	\draw[-] (0, 0) -- (0, 0.3) arc(180:0:0.3) -- (0.6, 0);
	\end{tikzpicture}\right) \otimes \id_W \right)(1 \otimes C \otimes 1 + 2\Omega^{23})(u \otimes v \otimes w)\\
	&\overset{\mathclap{\eqref{eq:rightDot}}}{=}\ \,- \hat{F}_\Phi\left(\begin{tikzpicture}[anchorbase]
	\draw[-] (0, 0) -- (0, 0.3) arc(180:0:0.3) -- (0.6, 0);
	\adot{(0.6, 0.2)}; 
	\end{tikzpicture}\right)_W(u \otimes v \otimes w),
	\end{align*}
	as desired. 
\end{proof}

\begin{prop} \label{prop:dotTokenPreserved}
	For all $a \in A$, we have $\hat{F}_\Phi\left(\begin{tikzpicture}[anchorbase]
	\draw[-] (0, 0) -- (0, 0.8);
	\adot{(0, 0.25)};
	\token{0, 0.55}{west}{a};
	\end{tikzpicture}
	\right)
	\ =\
	\hat{F}_\Phi\left(
	\begin{tikzpicture}[anchorbase]
	\draw[-] (0, 0) -- (0, 0.8);
	\adot{(0, 0.55)};
	\token{0, 0.25}{west}{a};
	\end{tikzpicture}\right)$.
\end{prop}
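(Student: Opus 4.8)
The plan is to reduce everything to the single fact, furnished by Proposition \ref{prop:incarnation}, that $\rho_a := F_\Phi(\text{the Frobenius token labeled }a)\colon V\to V$, $v\mapsto(-1)^{\bar a\bar v}va^\star$, is a morphism in $\g$-smod. Spelled out, this says that $\rho_a$ is a homomorphism of $U(\g)$-supermodules of parity $\bar a$, i.e.\ $\rho_a(Xv) = (-1)^{\bar a\bar X}X\rho_a(v)$ for all homogeneous $X\in U(\g)$ and $v \in V$; in particular, since $C$ is even, $\rho_a$ strictly commutes with the action of $C$, so $\rho_a C = C\rho_a$ on $V$. (Here I am using that $V=A^{m\mid n}$ is a $\g$-module, so that $C\in U(\g)$ acts on $V$ and these composites make sense --- the same observation already invoked in \eqref{eq:transferC} and the proof of Proposition \ref{prop:dotCapSlide}.)

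First I would unwind the two sides. Reading the diagrams in the statement from bottom to top and using that $\hat F_\Phi$ is a superfunctor, the left-hand (resp.\ right-hand) morphism is $\hat F_\Phi(\text{token }a)\circ\hat F_\Phi(\text{dot})$ (resp.\ $\hat F_\Phi(\text{dot})\circ\hat F_\Phi(\text{token }a)$). Evaluating at a left $\g$-supermodule $W$, using $\hat F_\Phi(\text{token }a)_W = \rho_a\otimes\id_W$ and writing $\hat F_\Phi(\text{dot})_W = \nu(C\otimes 1 + 2\Omega)$ as recorded just after Theorem \ref{thm:affineFunctor}, these become $(\rho_a\otimes\id_W)\circ\nu(C\otimes 1 + 2\Omega)$ and $\nu(C\otimes 1 + 2\Omega)\circ(\rho_a\otimes\id_W)$ as operators on $V\otimes W$. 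So it is enough to check that $\rho_a\otimes\id_W$ commutes with $C\otimes 1$ and with $\Omega=\sum_{X\in B_\g}X\otimes X^\vee$.

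Second, I would dispatch these two commutations using the interchange law together with the module-map property above. For $C\otimes 1$: both composites equal $\rho_a C\otimes 1 = C\rho_a\otimes 1$, with no sign appearing since $C$ is even. For $\Omega$: the interchange law gives $(\rho_a\otimes\id_W)(X\otimes X^\vee)=\rho_a X\otimes X^\vee$ and $(X\otimes X^\vee)(\rho_a\otimes\id_W)=(-1)^{\bar a\bar X}X\rho_a\otimes X^\vee$; summing over $B_\g$ and substituting $\rho_a X=(-1)^{\bar a\bar X}X\rho_a$ makes the two agree term by term. Combining the two cases yields the proposition. Equivalently, one can phrase the $\Omega$-step conceptually: $\rho_a\otimes\id_W$ is a morphism of diagonal $\g$-modules on $V\otimes W$ by bifunctoriality of $\otimes$ on $\g$-smod, hence commutes with $\Delta(C)$, and it commutes with $1\otimes C$ by the interchange law, so it commutes with $\Delta(C)-1\otimes C = C\otimes 1 + 2\Omega$.

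I do not expect a genuine obstacle here; the work is entirely sign bookkeeping. The one point requiring care is that a parity-$\bar a$ module homomorphism intertwines the $\g$-action only up to the Koszul sign $(-1)^{\bar a\bar X}$, and that this sign is exactly cancelled by the sign produced when commuting $\rho_a$ past $X^\vee$ in the interchange law --- or, in the $C\otimes 1$ and $1\otimes C$ pieces, simply vanishes because $C$ is even.
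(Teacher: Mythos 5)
Your proposal is correct and its underlying content is the same as the paper's proof; the only difference is one of presentation. The paper verifies Proposition~\ref{prop:dotTokenPreserved} by an explicit element-wise computation: it applies $\nu(C\otimes 1 + 2\Omega)$ to $v\otimes w$, then applies the token $a$ to each resulting term, tracks the Koszul signs, and regroups to land back on $\nu(C\otimes 1 + 2\Omega)\bigl((-1)^{\bar a\bar v}va^\star\otimes w\bigr)$. You instead observe up front that $\rho_a = F_\Phi(\text{token }a)$ is a parity-$\bar a$ morphism in $\g$-smod (a fact already implicit in Proposition~\ref{prop:incarnation}), hence $\rho_a X = (-1)^{\bar a\bar X}X\rho_a$ on $V$, and then deduce the commutation of $\rho_a\otimes\id_W$ with $C\otimes 1$ and $\Omega$ from the superinterchange law. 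The sign $(-1)^{\bar a\bar X}$ produced by the module-map property is exactly the sign produced by interchanging $\rho_a$ past $X^\vee$ (using $\overline{X^\vee}=\bar X$ since the trace is even), which is precisely the cancellation the paper's element-wise computation carries out term by term. Your conceptual rephrasing via $\Delta(C)-1\otimes C$ is a pleasant alternative framing but rests on the same facts; either write-up would be acceptable.
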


\begin{proof}
	Let $W$ be a left $\g$-supermodule, $a \in A$, $v \in V$, and $w \in W$. We compute:
	\begin{multline*}
	\hat{F}_\Phi\left(\begin{tikzpicture}[anchorbase]
	\draw[-] (0, 0) -- (0, 0.8);
	\adot{(0, 0.25)};
	\token{0, 0.55}{west}{a};
	\end{tikzpicture}
	\right)_W(v \otimes w)
	=
	\nu\hat{F}_\Phi\left(\begin{tikzpicture}[anchorbase]
	\draw[-] (0, 0) -- (0, 0.8);
	\token{0, 0.4}{west}{a};
	\end{tikzpicture}\right)_W\left(Cv \otimes w + 2\summ_{X \in B_\g} (-1)^{\bar{X}\bar{v}}Xv \otimes X^\vee w\right)\\
	=
	\nu(-1)^{\bar{a}\bar{v}}Cva^\star \otimes w + 2\nu\summ_{X \in B_\g} (-1)^{\bar{X}\bar{v} + \bar{a}\bar{X} + \bar{a}\bar{v}} Xv a^\star \otimes X^\vee w\\
	=
	\nu(C \otimes 1 + 2\Omega)((-1)^{\bar{a}\bar{v}}va^\star \otimes w)
	=
	\hat{F}_\Phi\left(\begin{tikzpicture}[anchorbase]
	\draw[-] (0, 0) -- (0, 0.8);
	\adot{(0, 0.4)};
	\end{tikzpicture}\right)_W\left((-1)^{\bar{a}\bar{v}}va^\star \otimes w \right)
	= \hat{F}_\Phi\left(
	\begin{tikzpicture}[anchorbase]
	\draw[-] (0, 0) -- (0, 0.8);
	\adot{(0, 0.25)};
	\token{0, 0.55}{west}{a};
	\end{tikzpicture}\right)_W (v \otimes w). \qedhere
	\end{multline*}
\end{proof}

Fix a homogeneous $\kk$-basis $B_{\End}$ for $\End_A(V)$. As for $B_{\g}$, we take duals for this basis relative to the nondegenerate bilinear form $(X, Y) \mapsto \tr(\str_\varphi(XY))$; see Lemma \ref{lem:strNondegenerate1}.

\begin{lem} \label{lem:omegaDecomposition}
	We have 
	\begin{equation} \label{eq:omegaDecomposition}
		2\Omega = \summ_{X \in B_{\End}}X \otimes X^\vee - \summ_{X \in B_{\End}} X^\dagger \otimes X^\vee.
	\end{equation}
\end{lem}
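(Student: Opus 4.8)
The plan is to identify \eqref{eq:omegaDecomposition} with the decomposition of $R := \End_A(V)$ into the $\pm 1$-eigenspaces of the generalized orthosymplectic involution $\dagger$. Let $\langle X, Y\rangle := \tr(\str(XY))$ be the nondegenerate even bilinear form with respect to which $B_{\End}^\vee$ and $B_\g^\vee$ are taken, and set $R^+ := \{X \in R : X^\dagger = X\}$, so that $\g = \{X \in R : X^\dagger = -X\}$. Since $\dagger$ is an involution and $\operatorname{char}\kk \neq 2$, the identity $X = \tfrac12(X + X^\dagger) + \tfrac12(X - X^\dagger)$ shows $R = R^+ \oplus \g$. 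The one substantive point is that this is an \emph{orthogonal} decomposition for $\langle-,-\rangle$.

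To prove orthogonality, I would first observe that $(R, \tr\circ\str, \dagger)$ is a symmetric involutive Frobenius superalgebra: by Example~\ref{ex:frobeniusMatrix} the map $\tr\circ\str$ is a symmetric trace map on $R \cong \Mat_{m \mid n}(A)$ (using that $A$ is symmetric), and by Remark~\ref{rem:generalizedInvolution} the involution $\dagger$ is compatible with it. Applying \eqref{eq:traceDoubleStar} with $R$ in place of $A$ then gives $\langle X, Y\rangle = \langle X^\dagger, Y^\dagger\rangle$ for all homogeneous $X, Y$, and replacing $Y$ by $Y^\dagger$ yields the self-adjointness relation $\langle X^\dagger, Y\rangle = \langle X, Y^\dagger\rangle$. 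Hence for $X \in R^+$ and $Y \in \g$ we get $\langle X, Y\rangle = \langle X^\dagger, Y\rangle = \langle X, Y^\dagger\rangle = -\langle X, Y\rangle$, so $\langle X, Y\rangle = 0$. Thus $R^+ \perp \g$, and since $\langle-,-\rangle$ is nondegenerate on $R$ it restricts to a nondegenerate form on each summand. (This self-adjointness can also be seen directly from the defining adjunction \eqref{def:dagger} together with conjugation-invariance of $\tr\circ\str$, but the Frobenius-algebra route is shorter.)

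Granting this, the identity becomes formal. As with $\Omega$ and $C$ in Definition~\ref{def:omega}, the sums $\summ_{X \in B} X \otimes X^\vee$ and $\summ_{X \in B} X^\dagger \otimes X^\vee$ over a homogeneous basis $B$ of $R$ (with $B^\vee$ its left dual for $\langle-,-\rangle$) are independent of the choice of $B$, so I may compute them using $B_{\End} = B^+ \sqcup B_\g$, where $B^+$ is any homogeneous basis of $R^+$ and $B_\g$ is our fixed basis of $\g$. By orthogonality the left dual basis splits as $B_{\End}^\vee = (B^+)^\vee \sqcup B_\g^\vee$ with duals taken inside each summand; in particular, for $X \in B_\g$ the vector $X^\vee$ computed in $R$ is exactly the $\g$-dual appearing in $\Omega$. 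Writing the difference in \eqref{eq:omegaDecomposition} as $\summ_{X \in B_{\End}}(X - X^\dagger) \otimes X^\vee$, the $B^+$-summands vanish (there $X^\dagger = X$), while the $B_\g$-summands give $\summ_{X \in B_\g} 2X \otimes X^\vee = 2\Omega$, as required. The only real work is the orthogonality above, which is essentially the computation already carried out for \eqref{eq:traceDoubleStar}.
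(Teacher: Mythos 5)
Your proof is correct and follows the same route as the paper's: decompose $\End_A(V)$ into the $\pm 1$-eigenspaces of $\dagger$, pick a basis $B^+ \sqcup B_\g$ adapted to the splitting, and evaluate the basis-independent sums there. The one place you go further is in explicitly verifying that the splitting is \emph{orthogonal} for $\tr\circ\str$ (via \eqref{eq:traceDoubleStar} applied to $(\End_A(V), \tr\circ\str, \dagger)$); the paper folds this into the phrase ``it is straightforward to show,'' but it really is needed, since without orthogonality the $\End_A(V)$-dual of $X\in B_\g$ relative to $B^+\sqcup B_\g$ need not lie in $\g$, and thus need not agree with the $\g$-dual used to define $\Omega$. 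So this is the same argument, just with the load-bearing step made visible.
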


\begin{proof}
	A quick calculation shows that the sums appearing in the lemma statement are independent of the basis for $\End_A(V)$. Since $-^\dagger$ is an involutive linear operator on $\End_A(V)$, $\End_A(V)$ decomposes a direct sum of its 1-eigenspace and its $(-1)$-eigenspace. Let $B_1$ be a basis for the 1-eigenspace. Noting that $\g$ is precisely the $(-1)$-eigenspace, we find that $B_1 \dot\cup B_\g$ is a basis for $\End_A(V)$. Hence we have:
	\begin{align*}
	2\Omega &= 2\summ_{X \in B_\g} X \otimes X^\vee\\
	&= \summ_{X \in B_\g}  X \otimes X^\vee + \summ_{X \in B_1} X \otimes X^\vee + \summ_{X \in B_\g} X \otimes X^\vee - \summ_{X \in B_1} X \otimes X^\vee\\
	&= \summ_{X \in B_\g} X \otimes X^\vee + \summ_{X \in B_1} X \otimes X^\vee - \summ_{X \in B_\g} X^\dagger \otimes X^\vee - \summ_{X \in B_1} X^\dagger \otimes X^\vee\\
	&=\summ_{X \in B_{\End}} X \otimes X^\vee - \summ_{X \in B_{\End}} X^\dagger \otimes X^\vee. \qedhere
	\end{align*}
\end{proof}

\begin{lem} \label{lem:omegaSwap}
	For all $u, v \in V$ we have:
	\begin{equation}\label{eq:omegaSwap}
	\left(\summ_{X \in B_{\End}} X \otimes X^\vee \right)(u \otimes v) = (-1)^{\bar{u}\bar{v}}v \otimes u \left(\summ_{b \in B_A}b \otimes b^\vee\right),
	\end{equation}
	\begin{equation} \label{eq:omegaDagger}
	\left(\summ_{X \in B_{\End}} X^\dagger \otimes X^\vee \right)(u \otimes v) = \nu\summ_{b \in B_A} \summ_{w \in B_V} \Phi(u, vb)w \otimes w^\vee b^\vee.
	\end{equation}
\end{lem}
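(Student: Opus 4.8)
The plan is to evaluate each identity on an arbitrary homogeneous $u\otimes v\in V\otimes V$, collapse the resulting sums over $B_{\End}$ using two resolutions of the identity (one in $V$ relative to $\Phi$, one in $\End_A(V)$ relative to $\langle-,-\rangle:=\tr\circ\str$), and finish with one Frobenius expansion in $A$. The common technical ingredient is a rank-one pairing lemma: for $p,q\in V$ let $\theta_{p,q}\in\End_A(V)$ be the $A$-linear map $x\mapsto p\,\varphi(q,x)$; then $Y\circ\theta_{p,q}=\theta_{Yp,q}$, and computing the supertrace of a rank-one operator in coordinates gives $\langle Y,\theta_{p,q}\rangle=(-1)^{\bar q+\sigma}\Phi(q,Yp)$ for all $Y\in\End_A(V)$ (both sides being zero unless $\bar Y=\overline{\theta_{p,q}}$).

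For \eqref{eq:omegaSwap}: write $\bigl(\summ_{X\in B_{\End}}X\otimes X^\vee\bigr)(u\otimes v)=\summ_X(-1)^{\bar X\bar u}Xu\otimes X^\vee v$ and expand the \emph{second} slot as $X^\vee v=\summ_{w\in B_V}w\,\Phi(w^\vee,X^\vee v)$ (valid since $\Phi(w^\vee,w')=\delta_{ww'}$); the sum becomes $\summ_w\bigl(\summ_X(-1)^{\bar X\bar u}\Phi(w^\vee,X^\vee v)Xu\bigr)\otimes w$. Applying the pairing lemma with $Y=X^\vee$, $p=v$, $q=w^\vee$ rewrites the inner coefficient as $\langle X^\vee,Z\rangle$ with $Z$ a sign times $\theta_{v,w^\vee}$, and the resolution $\summ_X\langle X^\vee,Z\rangle X=Z$ turns the inner sum into $Z(u)=(\text{sign})\,v\,\varphi(w^\vee,u)$. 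Expanding $\varphi(w^\vee,u)=\summ_{b\in B_A}\Phi(w^\vee,ub)\,b^\vee$ (from $a=\summ_b\tr(ab)b^\vee$, \eqref{eq:traceDecomposition}, and $\tr(\varphi(w^\vee,u)b)=\Phi(w^\vee,ub)$), then applying $\summ_w\Phi(w^\vee,x)w=x$ with $x=ub$, reduces the expression to $\summ_b(\text{sign})\,(vb^\vee)\otimes(ub)$; a final reindexing of the $B_A$-sum through its dual basis — using \eqref{eq:doubleDual} and the basis-independence of $\summ_X X\otimes X^\vee$ — and a sign count produce $(-1)^{\bar u\bar v}\summ_b(-1)^{\bar b\bar u}(vb)\otimes(ub^\vee)$, which is the stated right-hand side once one reads $v\otimes u(\summ_b b\otimes b^\vee)$ as a graded product (the sign $(-1)^{\bar b\bar u}$ is exactly the Koszul sign from commuting $b$ past $u$).

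For \eqref{eq:omegaDagger}: run the same machine, but expand the \emph{first} slot of $\summ_X(-1)^{\bar X\bar u}X^\dagger u\otimes X^\vee v$ as $X^\dagger u=\summ_w w\,\Phi(w^\vee,X^\dagger u)$, and remove the dagger by combining the hermitian symmetry $\Phi(a,b)=\nu(-1)^{\bar a\bar b}\Phi(b,a)$ with the adjoint identity $\Phi(X^\dagger u,w^\vee)=(-1)^{\bar X\bar u}\Phi(u,Xw^\vee)$ (from \eqref{def:dagger}); pleasantly, the two factors $(-1)^{\bar X\bar u}$ cancel. Reindexing the inner sum by $Y=X^\vee$ (so $X=(-1)^{\bar Y}Y^\vee$) and applying the pairing lemma with $q=u$, $p=w^\vee$, the inner sum collapses via $\summ_Y\langle Y^\vee,Z_0\rangle Y=Z_0$ to $Z_0(v)=(\text{sign})\,w^\vee\varphi(u,v)$; expanding $\varphi(u,v)=\summ_b\Phi(u,vb)\,b^\vee$ then gives precisely $\nu\summ_{b,w}(-1)^{\bar w\sigma+\sigma}\Phi(u,vb)\,w\otimes w^\vee b^\vee$, the factor $(-1)^{\bar w\sigma+\sigma}$ arising from $\overline{w^\vee}=\bar w+\sigma$ once one restricts to the support of the pairings.

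The main obstacle is sign bookkeeping rather than conceptual difficulty. One must verify the pairing lemma (which requires tracking the sign $\str$ produces on the odd block) and then correctly coordinate the Koszul signs of the tensor-of-morphisms convention, the signs in the hermitian symmetry of $\Phi$ and in \eqref{def:dagger}, the double-dual relation \eqref{eq:doubleDual}, and the parities $\overline{b^\vee}=\bar b$, $\overline{w^\vee}=\bar w+\sigma$, $\overline{X^\vee}=\bar X$, checking that after passing to the support of each pairing all the stray signs collapse to exactly $(-1)^{\bar u\bar v}$ and $(-1)^{\bar w\sigma+\sigma}$. That coordination is where essentially all of the work is.
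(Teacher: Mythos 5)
Your argument is correct but takes a genuinely different route. For \eqref{eq:omegaSwap}, the paper gives no proof at all — it just cites it as the left-supermodule version of \cite[Lem.~3.6]{mcsween2021affine}. You instead prove it from scratch via a rank-one pairing lemma $\langle Y,\theta_{p,q}\rangle=(-1)^{\bar q+\sigma}\Phi(q,Yp)$ (where $\theta_{p,q}:x\mapsto p\,\varphi(q,x)$), which is a clean way to collapse the $\End_A(V)$-sum; I checked your sign bookkeeping through to $(-1)^{\bar u\bar v}\summ_b(-1)^{\bar b\bar u}(vb)\otimes(ub^\vee)$ and it is consistent. For \eqref{eq:omegaDagger}, the opening moves (expand $X^\dagger u$ over $B_V$, apply \eqref{def:superSymmetric} and \eqref{def:dagger}, observe the $(-1)^{\bar X\bar u}$ cancellation) coincide with the paper exactly; the difference is the final collapse. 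The paper recognizes the resulting double sum as an instance of \eqref{eq:omegaSwap} applied to $w^\vee\otimes v$ (with $\Phi(u,-)$ laced through one leg) and quotes it, whereas you invoke your pairing lemma a second time to contract the $\End_A(V)$-sum to $\theta_{w^\vee,u}(v)=w^\vee\varphi(u,v)$ and then expand $\varphi(u,v)$ over $B_A$. The two routes are of comparable length once the pairing lemma is in hand; your version is more self-contained (no appeal to \cite{mcsween2021affine}), at the cost of establishing the supertrace formula for $\str(\theta_{r,q})$, which is a real if routine computation you should actually write out, since the $(-1)^{\bar q+\sigma}$ is exactly the kind of sign that is easy to get wrong in the $m\mid n$ block decomposition. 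One small caveat: when you "apply $\summ_w\Phi(w^\vee,x)w=x$," this is only after the stray $(-1)^{\bar w}$ factors have been rewritten in terms of $\bar u,\bar b$ using the support condition $\bar w=\bar u+\bar b$; you flag this but it deserves a line in a written-up version, since it is precisely where the final reindexing through $B_A^\vee$ picks up the $(-1)^{\bar b}$ needed to land on the stated form.
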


\begin{proof}
	The first identity is analogous to that of \cite[Lem.~3.7]{mcsween2021affine}, and follows from essentially the same proof; the only real distinction is that we work with right $A$-supermodules in the present paper, whereas the authors of \cite{mcsween2021affine} work with left $A$-supermodules. (Note, however, that $\Omega$ in \cite{mcsween2021affine} refers to the element $\summ_{X \in B_{\End}} X \otimes X^\vee$ appearing in \eqref{eq:omegaSwap}, not our $\Omega$ from Definition \ref{def:omega}.) For the second identity, we compute:
	\begin{align*}
	\left(\summ_{X \in B_{\End}} X^\dagger \otimes X^\vee \right)(u \otimes v) &= \summ_{X \in B_{\End}} (-1)^{\bar{X}\bar{u}} X^\dagger u \otimes X^\vee v\\
	&\overset{\mathclap{\eqref{eq:traceDecomposition}}}{=}\ \,\summ_{X \in B_{\End}}\summ_{w \in B_V} (-1)^{\bar{X}\bar{u}}\Phi(w^\vee, X^\dagger u)w \otimes X^\vee v\\
	&\overset{\mathclap{\eqref{def:dagger}}}{=}\ \,\summ_{X \in B_{\End}}\summ_{w \in B_V} (-1)^{\bar{X}\bar{u} + \bar{X}\bar{w}}\Phi(Xw^\vee,  u)w \otimes X^\vee v\\
	&\overset{\mathclap{\eqref{def:superSymmetric}}}{=}\ \,\nu \summ_{X \in B_{\End}}\summ_{w \in B_V} (-1)^{\bar{X}\bar{w} + \bar{u}\bar{w}}\Phi(u, X w^\vee) w \otimes X^\vee v\\
	&\overset{\mathclap{\eqref{eq:omegaSwap}}}{=}\ \,\nu\summ_{b \in B_A} \summ_{w \in B_V} (-1)^{\bar{u}\bar{w} + \bar{w}\bar{v} + \bar{b}\bar{w}}\Phi(u, vb)w \otimes w^\vee b^\vee\\
	&= \nu\summ_{b \in B_A} \summ_{w \in B_V}\Phi(u, vb)w \otimes w^\vee b^\vee,
	\end{align*}
	as desired. Note that in the last equality we simplified the sign by using the fact that the only nonzero summands are those for which $\bar{u} = \bar{v} + \bar{b}$.
\end{proof}

\begin{prop} \label{prop:dotSlidePreserved}
	We have $\hat{F}_\Phi\left(\begin{tikzpicture}[anchorbase]
	\draw[-] (0, 0) -- (0.8, 0.8);
	\draw[-] (0.8, 0) -- (0, 0.8);
	\adot{(0.2, 0.6)};
	\end{tikzpicture}
	\ -\
	\begin{tikzpicture}[anchorbase]
	\draw[-] (0, 0) -- (0.8, 0.8);
	\draw[-] (0.8, 0) -- (0, 0.8);
	\adot{(0.6, 0.2)};
	\end{tikzpicture}\right)
	\ =\
	\hat{F}_\Phi\left(
	\begin{tikzpicture}[anchorbase]
	\draw[-] (0, 0) -- (0, 0.8);
	\draw[-] (0.6, 0) -- (0.6, 0.8);
	\teleportUU{0, 0.4}{0.6, 0.4};
	\end{tikzpicture}
	\ -\
	\begin{tikzpicture}[anchorbase]
	\draw[-] (0, 0) -- (0, 0.3) arc(180:0:0.3) -- (0.6, 0);
	\draw[-] (0, 1.5) -- (0, 1.2) arc(180:360:0.3) -- (0.6, 1.5);
	\teleportRR{0.3, 0.6}{0.3, 0.9};
	\end{tikzpicture}\right)$.
\end{prop}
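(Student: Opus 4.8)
The plan is to show that $\hat F_\Phi$ respects \eqref{rel:dotCrossingSlide} by comparing the two sides as supernatural transformations $V\otimes V\otimes-\to V\otimes V\otimes-$. As in the proofs of Propositions~\ref{prop:dotCapSlide} and~\ref{prop:dotTokenPreserved}, it suffices to fix a left $\g$-supermodule $W$ and check that the induced maps on $V\otimes V\otimes W$ agree on a homogeneous element $u\otimes v\otimes w$ with $u,v\in V$, $w\in W$.

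For the left-hand side I would read the first diagram as the crossing followed by an affine dot on the left output strand, and the second as an affine dot on the right input strand followed by the crossing; under $\hat F_\Phi$ these become $\nu(C\otimes 1\otimes 1+2\Omega^{12}+2\Omega^{13})\circ s$ and $s\circ\nu(1\otimes C\otimes 1+2\Omega^{23})$, where $s=\hat F_\Phi(\text{crossing})$ sends $a\otimes b\otimes w$ to $\nu(-1)^{\bar a\bar b}b\otimes a\otimes w$ and I have used \eqref{eq:leftDot} and \eqref{eq:rightDot}. Evaluating on $u\otimes v\otimes w$, commuting $s$ past the remaining operators in the second expression (this turns $1\otimes C\otimes 1$ into $C\otimes 1\otimes 1$ and $\Omega^{23}$ into a copy of the $\Omega^{13}$-term), and using $\nu^2=1$, the $C\otimes 1\otimes 1$-terms and the $\Omega^{13}$-terms cancel between the two diagrams, leaving only $(-1)^{\bar u\bar v}(2\Omega^{12})(v\otimes u\otimes w)=2(-1)^{\bar u\bar v}\summ_{X\in B_\g}(-1)^{\bar X\bar v}Xv\otimes X^\vee u\otimes w$. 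In other words, the left-hand side acts as $(-1)^{\bar u\bar v}\bigl((2\Omega)(v\otimes u)\bigr)\otimes w$.

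For the right-hand side, neither summand involves the affine dot, so each is of the form $F_\Phi(-)\otimes\id_W$ and I only need to compute $F_\Phi$ on the underlying diagrams of $\B^\sigma(A,-^\star)$. Let $E$ denote the diagram on the far right of \eqref{rel:dotCrossingSlide}; the target is $F_\Phi(\text{teleporter})(u\otimes v)-(-1)^\sigma F_\Phi(E)(u\otimes v)$, and by Lemma~\ref{lem:omegaDecomposition} the expression $(-1)^{\bar u\bar v}(2\Omega)(v\otimes u)$ found above splits as $(-1)^{\bar u\bar v}\bigl(\summ_{X\in B_{\End}}X\otimes X^\vee\bigr)(v\otimes u)$ minus $(-1)^{\bar u\bar v}\bigl(\summ_{X\in B_{\End}}X^\dagger\otimes X^\vee\bigr)(v\otimes u)$, so the plan is to match the two halves. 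For the teleporter I would expand its definition, apply $F_\Phi$ to the two Frobenius tokens via Proposition~\ref{prop:incarnation}, and use $\overline{b^\vee}=\bar b$, \eqref{eq:dualStar}, and the basis-independence of $\summ_{b\in B_A}b\otimes b^\vee$ to re-index and recognize the result as the first term through the first identity of Lemma~\ref{lem:omegaSwap}. For $E$ I would first use the identities displayed just before Definition~\ref{def:affineCategory} to replace the sideways reflecting teleporter by one whose two endpoints sit on a leg of the cap and a leg of the cup; then apply $F_\Phi$ to the cap ($\Phi$), to the cup ($\summ_{z\in B_V}(-1)^{\sigma\bar z}z\otimes z^\vee$), and to the two tokens, slide a token across the cup using the $F_\Phi$-image of the last relation of \eqref{rel:brauerBasics}, re-index using \eqref{eq:dualStar} and basis-independence, and convert $\Phi(ub^\star,v)$ into the shape $\Phi(v,ub)$ using the supersymmetry relations \eqref{def:superSymmetric}. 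After pulling out the explicit $(-1)^\sigma$ the result should be $\nu\summ_{b\in B_A}\summ_{z\in B_V}(-1)^{\bar z\sigma+\sigma}\Phi(v,ub)z\otimes z^\vee b^\vee$, i.e.\ the second term above by the second identity of Lemma~\ref{lem:omegaSwap}. Subtracting the two halves and comparing with the left-hand-side computation finishes the proof.

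The main obstacle will be the sign bookkeeping in the $E$-term, where many sign sources must conspire: the parity-$\sigma$ cup contributes $(-1)^{\sigma\bar z}$ on coevaluation, the relation carries the explicit $(-1)^\sigma$, the reflecting teleporter introduces $\star$'s that have to be pushed through \eqref{eq:dualStar}, the sum over $B_V$ carries the dual-basis parities $\overline{z^\vee}=\bar z+\sigma$, rewriting $\Phi(ub^\star,v)$ as $\Phi(v,ub)$ produces further signs from \eqref{def:superSymmetric}, and the convention $(-1)^{\bar b x}$ for a teleporter spanning the cup and cap must be evaluated correctly. All of these have to combine to exactly the sign $(-1)^{\bar z\sigma+\sigma}$ occurring in Lemma~\ref{lem:omegaSwap}; choosing the most convenient of the equivalent forms of the cup--cap diagram, and straightening it before applying $F_\Phi$, should keep this manageable.
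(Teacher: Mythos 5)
Your proposal takes essentially the same route as the paper: compute $\hat F_\Phi$ on elementary tensors, reduce the left side to a multiple of $\Omega$, then split via Lemma~\ref{lem:omegaDecomposition} and match the two pieces against the teleporter and cup--cap terms using Lemma~\ref{lem:omegaSwap}. The only organizational difference is that the paper first pre-composes both sides with a crossing (valid since the crossing is an involutive isomorphism), which lets it simplify the dotted crossing to a ``figure-eight'' and arrive at the cleaner reduced target $2\nu\Omega^{12}(u\otimes v\otimes w)$; you instead commute $s$ through the operator expressions directly, obtaining the equivalent $(-1)^{\bar u\bar v}(2\Omega)(v\otimes u)\otimes w$. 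Both are correct and the cancellations you describe (of the $C\otimes 1\otimes 1$ and $\Omega^{13}$ terms) do go through.

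One small slip to flag: the formula you give for the $E$-term, $\nu\summ_{b,z}(-1)^{\bar z\sigma+\sigma}\Phi(v,ub)z\otimes z^\vee b^\vee$, is by \eqref{eq:omegaDagger} equal to $\bigl(\summ_{X\in B_{\End}}X^\dagger\otimes X^\vee\bigr)(v\otimes u)$, \emph{not} the ``second term above'' $(-1)^{\bar u\bar v}\bigl(\summ X^\dagger\otimes X^\vee\bigr)(v\otimes u)$; you have dropped a $(-1)^{\bar u\bar v}$. If one carries out the sign bookkeeping you describe, the correct identity is $(-1)^\sigma F_\Phi(E)(u\otimes v)=(-1)^{\bar u\bar v}\bigl(\summ X^\dagger\otimes X^\vee\bigr)(v\otimes u)$, and the matching with the left-hand side does close. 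So this is a typo in the intermediate display rather than a gap in the argument, but it is exactly the kind of sign that you rightly warn ``must conspire,'' so it is worth fixing explicitly when writing up.
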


\begin{proof}
	Given that we know $\hat{F}_\Phi$ respects the first relation from \eqref{rel:brauer}, it suffices to prove the equality obtained by adjoining a crossing to the top of each of the four diagrams. Using the aforementioned relation, the first relation from \eqref{rel:brauerBasics}, and \eqref{rel:teleporterCrossingSlide}, our goal reduces to showing that $$\hat{F}_\Phi\left(	\begin{tikzpicture}[anchorbase]
	\draw[-] (0, 0) -- (0, 0.2) to[out=90, in=270] (0.4, 0.6) to[out=90, in=270] (0, 1) -- (0, 1.2);
	\draw[-] (0.4, 0) -- (0.4, 0.2) to[out=90, in=270] (0, 0.6) to[out=90, in=270] (0.4, 1) -- (0.4, 1.2);
	\adot{(0, 0.6)};
	\end{tikzpicture}
	\ -\
	\begin{tikzpicture}[anchorbase]
	\draw[-] (0, 0) -- (0, 1.2);
	\draw[-] (0.4, 0) -- (0.4, 1.2);
	\adot{(0.4, 0.6)};
	\end{tikzpicture}\right)
	\ =\
	\hat{F}_\Phi\left(
	\begin{tikzpicture}[anchorbase]
	\draw[-] (0, 0) -- (0.8, 0.8);
	\draw[-] (0.8, 0) -- (0, 0.8);
	\teleportUU{0.2, 0.6}{0.6, 0.6};
	\end{tikzpicture}
	\ -\
	\begin{tikzpicture}[anchorbase]
	\draw[-] (0, 0) -- (0, 0.3) arc(180:0:0.3) -- (0.6, 0);
	\draw[-] (0, 1.5) -- (0, 1.2) arc(180:360:0.3) -- (0.6, 1.5);
	\teleportUU{0.6, 0.2}{0.6, 1.3};
	\end{tikzpicture}\right).$$
	Let $W$ be a left $\g$-supermodule, $u, v \in V$, and $w \in W$. We compute:
	\begin{align*}
	&\hspace{-40px} \hat{F}_\Phi\left(\begin{tikzpicture}[anchorbase]
	\draw[-] (0, 0) -- (0, 0.2) to[out=90, in=270] (0.4, 0.6) to[out=90, in=270] (0, 1) -- (0, 1.2);
	\draw[-] (0.4, 0) -- (0.4, 0.2) to[out=90, in=270] (0, 0.6) to[out=90, in=270] (0.4, 1) -- (0.4, 1.2);
	\adot{(0, 0.6)};
	\end{tikzpicture}\right)_W(u \otimes v \otimes w) \\
	&= \nu (-1)^{\bar{u}\bar{v}}\hat{F}_\Phi\left(\begin{tikzpicture}[anchorbase]
	\draw[-] (0, 0) -- (0.8, 0.8);
	\draw[-] (0.8, 0) -- (0, 0.8);
	\adot{(0.2, 0.2)};
	\end{tikzpicture}\right)_W(v \otimes u \otimes w)\\
	&\overset{\mathclap{\eqref{eq:leftDot}}}{=}\ \, (-1)^{\bar{u}\bar{v}}\hat{F}_\Phi\left(\begin{tikzpicture}[anchorbase]
	\draw[-] (0, 0) -- (0.8, 0.8);
	\draw[-] (0.8, 0) -- (0, 0.8);
	\end{tikzpicture}\right)_W\bigg(Cv \otimes u \otimes w + 2\summ_{X \in B_\g} (-1)^{\bar{X}\bar{v}} Xv \otimes X^\vee u \otimes w\\
	&\hphantom{=================} + 2\summ_{X \in B_\g} (-1)^{\bar{X}\bar{v} + \bar{X}\bar{u}} Xv \otimes u \otimes X^\vee w\bigg)\\
	&= \nu(-1)^{\bar{u}\bar{v}}\bigg((-1)^{\bar{v}\bar{u}}u \otimes Cv \otimes w + 2\summ_{X \in B_\g} (-1)^{\bar{X}\bar{v} + (\bar{X} + \bar{v})(\bar{X} + \bar{u})} X^\vee u \otimes X v \otimes w\\
	&\hphantom{=======.} + 2\summ_{X \in B_\g} (-1)^{\bar{X}\bar{v} + \bar{u}\bar{v}} u \otimes Xv \otimes X^\vee w\bigg)\\
	&= \nu\left(u \otimes Cv \otimes w + 2\summ_{X \in B_\g} (-1)^{\bar{X} + \bar{X}\bar{u}} X^\vee u \otimes Xv \otimes w + 2\summ_{X \in B_\g} (-1)^{\bar{X}\bar{v}} u \otimes Xv \otimes X^\vee w\right)\\
	&\overset{\mathclap{\eqref{eq:doubleDual}}}{=}\ \,
	\nu\left(u \otimes Cv \otimes w + 2\summ_{X \in B_\g} (-1)^{\bar{X}\bar{u}}Xu \otimes X^\vee v \otimes w + 2\summ_{X \in B_{\g}} (-1)^{\bar{X}\bar{v}}u \otimes Xv \otimes X^\vee w\right)\\
	&= \nu(1 \otimes C \otimes 1 + 2\Omega^{12} + 2\Omega^{23})(u \otimes v \otimes w).
	\end{align*}
	Hence, using \eqref{eq:rightDot}, we find that $\hat{F}_\Phi\left(	\begin{tikzpicture}[anchorbase]
	\draw[-] (0, 0) -- (0, 0.2) to[out=90, in=270] (0.4, 0.6) to[out=90, in=270] (0, 1) -- (0, 1.2);
	\draw[-] (0.4, 0) -- (0.4, 0.2) to[out=90, in=270] (0, 0.6) to[out=90, in=270] (0.4, 1) -- (0.4, 1.2);
	\adot{(0, 0.6)};
	\end{tikzpicture}
	\ -\
	\begin{tikzpicture}[anchorbase]
	\draw[-] (0, 0) -- (0, 1.2);
	\draw[-] (0.4, 0) -- (0.4, 1.2);
	\adot{(0.4, 0.6)};
	\end{tikzpicture}\right) = 2 \nu\Omega^{12}$. Next, we have
	\begin{align*}
	\hat{F}_\Phi\left(\begin{tikzpicture}[anchorbase]
	\draw[-] (0, 0) -- (0, 0.3) arc(180:0:0.3) -- (0.6, 0);
	\draw[-] (0, 1.5) -- (0, 1.2) arc(180:360:0.3) -- (0.6, 1.5);
	\teleportUU{0.6, 0.2}{0.6, 1.3};
	\end{tikzpicture}\right)_W(u \otimes v \otimes w) &= \summ_{b \in B_A}\summ_{x \in B_V} (-1)^{\bar{b}\bar{u} + \bar{b}\bar{v}}\Phi(u, vb^\vee)x \otimes x^\vee b \otimes w\\
	&\overset{\mathclap{\eqref{eq:doubleDual}}}{=}\ \,\summ_{b \in B_A}\summ_{x \in B_V} \Phi(u, vb)x \otimes x^\vee b^\vee \otimes w,
	\end{align*}
	where in the last equality we switch to the dual basis of $B_A$ and use the fact that the summands are zero unless $\bar{u} + \bar{v} + \bar{b} = 0$ to simplify the sign. With this in mind, we compute:
	\begin{align*}
	&\hspace{-45px} \hat{F}_\Phi\left(	\begin{tikzpicture}[anchorbase]
	\draw[-] (0, 0) -- (0, 0.2) to[out=90, in=270] (0.4, 0.6) to[out=90, in=270] (0, 1) -- (0, 1.2);
	\draw[-] (0.4, 0) -- (0.4, 0.2) to[out=90, in=270] (0, 0.6) to[out=90, in=270] (0.4, 1) -- (0.4, 1.2);
	\adot{(0, 0.6)};
	\end{tikzpicture}
	\ -\
	\begin{tikzpicture}[anchorbase]
	\draw[-] (0, 0) -- (0, 1.2);
	\draw[-] (0.4, 0) -- (0.4, 1.2);
	\adot{(0.4, 0.6)};
	\end{tikzpicture}\right)_W(u \otimes v \otimes w)\\
	&= 2\nu\Omega^{12}(u \otimes v \otimes w)\\
	& \overset{\mathclap{\eqref{eq:omegaDecomposition}}}{=}\ \, \nu\left(\summ_{X \in B_{\End}}(X \otimes X^\vee - X^\dagger \otimes X^\vee) u \otimes v \right) \otimes w\\
	&\underset{\mathclap{\eqref{eq:omegaDagger}}}{\overset{\mathclap{\eqref{eq:omegaSwap}}}{=}}\ \, \nu\left((-1)^{\bar{u}\bar{v}}v \otimes u \summ_{b \in B_A} b \otimes b^\vee - \nu \summ_{b \in B_A} \summ_{x \in B_V} \Phi(u, vb) x \otimes x^\vee b^\vee \right) \otimes w\\
	&= \hat{F}_\Phi\left(
	\begin{tikzpicture}[anchorbase]
	\draw[-] (0, 0) -- (0.8, 0.8);
	\draw[-] (0.8, 0) -- (0, 0.8);
	\teleportUU{0.2, 0.6}{0.6, 0.6};
	\end{tikzpicture}\right)_W(u \otimes v \otimes w) - \hat{F}_\Phi\left(\begin{tikzpicture}[anchorbase]
	\draw[-] (0, 0) -- (0, 0.3) arc(180:0:0.3) -- (0.6, 0);
	\draw[-] (0, 1.5) -- (0, 1.2) arc(180:360:0.3) -- (0.6, 1.5);
	\teleportUU{0.6, 0.2}{0.6, 1.3};
	\end{tikzpicture}\right)_W(u \otimes v \otimes w),
	\end{align*}
	proving the desired equality.
\end{proof}

Recall that Proposition \ref{prop:incarnation} implies that $\hat{F}_\Phi$ preserves all of the defining relations for $\AB(A, -^\star)$ that do not involve affine dots. Propositions \ref{prop:dotCapSlide}, \ref{prop:dotTokenPreserved}, and \ref{prop:dotSlidePreserved} show that the relations involving affine dots are also preserved, completing the proof of Theorem \ref{thm:affineFunctor}.

\section{Basis Conjecture} \label{s:basis}

In this final section, we state a conjecture for bases of homomorphism spaces in $\AB(A, -^\star)$ and discuss a potential proof strategy. We begin by proving a pair of results that will inform the basis conjecture. For each $n \in \N$, we define the shorthand $\begin{tikzpicture}[anchorbase]
\draw[-] (0, 0) -- (0, 0.4);
\adot{(0, 0.2)} node[anchor=east, color=black] {$\scr n$};
\end{tikzpicture} := \left(\begin{tikzpicture}[anchorbase]
\draw[-] (0, 0) -- (0, 0.4);
\adot{(0, 0.2)};
\end{tikzpicture}\right)^{\circ n}$, i.e.\ a string with $n$ dots.

\begin{lem}
	For each $n \in \N$, we have 
	\begin{equation} \label{eq:multiDotCrossingSlide}
	\begin{tikzpicture}[anchorbase]
	\draw[-] (0, 0) -- (0.8, 0.8);
	\draw[-] (0.8, 0) -- (0, 0.8);
	\adot{(0.6, 0.2)} node[anchor=west, color=black] {$\scr n$};
	\end{tikzpicture}
	\ =\
	\begin{tikzpicture}[anchorbase]
	\draw[-] (0, 0) -- (0.8, 0.8);
	\draw[-] (0.8, 0) -- (0, 0.8);
	\adot{(0.2, 0.6)} node[anchor=east, color=black] {$\scr n$};
	\end{tikzpicture}
	\ + \summ_{i = 1}^{n}\
	\begin{tikzpicture}[anchorbase]
	\draw[-] (0, 0) -- (0, 0.3) arc(180:0:0.3) -- (0.6, 0);
	\draw[-] (0, 1.5) -- (0, 1.2) arc(180:360:0.3) -- (0.6, 1.5);
	\adot{(0.6, 0.3)} node[anchor=west, color=black] {$\scr n - i$};
	\adot{(0, 1.25)} node[anchor=east, color=black] {$\scr i - 1$};
	\teleportRR{0.3, 0.6}{0.3, 0.9};
	\end{tikzpicture}
	\ -\
	\begin{tikzpicture}[anchorbase]
	\draw[-] (0, 0) -- (0, 1);
	\draw[-] (0.6, 0) -- (0.6, 1);
	\teleportUU{0, 0.5}{0.6, 0.5};
	\adot{(0, 0.75)} node[anchor=east, color=black] {$\scr i - 1$};
	\adot{(0.6, 0.25)} node[anchor=west, color=black] {$\scr n - i$};
	\end{tikzpicture}.
	\end{equation}
	(When $n = 0$, the sum over $i$ should be interpreted as 0.)
\end{lem}

\begin{proof}
	We proceed by induction on $n$. For $n = 0$, the identity holds trivially. Assuming that \eqref{eq:multiDotCrossingSlide} holds for some $n \in \N$, we compute:
	\begin{multline*}
	\begin{tikzpicture}[anchorbase]
	\draw[-] (0, 0) -- (0.8, 0.8);
	\draw[-] (0.8, 0) -- (0, 0.8);
	\adot{(0.6, 0.2)} node[anchor=west, color=black] {$\scr n + 1$};
	\end{tikzpicture}
	=
	\begin{tikzpicture}[anchorbase]
	\draw[-] (0, 0) -- (1, 1);
	\draw[-] (1, 0) -- (0, 1);
	\adot{(0.8, 0.2)} node[anchor=west, color=black] {};
	\adot{(0.6, 0.4)} node[anchor=west, color=black] {$\scr n$};
	\end{tikzpicture}
	\overset{\eqref{eq:multiDotCrossingSlide}}{=}
	\begin{tikzpicture}[anchorbase]
	\draw[-] (0, 0) -- (0.8, 0.8);
	\draw[-] (0.8, 0) -- (0, 0.8);
	\adot{(0.2, 0.6)} node[anchor=east, color=black] {$\scr n$};
	\adot{(0.6, 0.2)} node[anchor=east, color=black] {};
	\end{tikzpicture}
	\ + \summ_{i = 1}^{n}\
	\begin{tikzpicture}[anchorbase]
	\draw[-] (0, -0.1) -- (0, 0.3) arc(180:0:0.3) -- (0.6, -0.1);
	\draw[-] (0, 1.5) -- (0, 1.2) arc(180:360:0.3) -- (0.6, 1.5);
	\adot{(0.6, 0.3)} node[anchor=west, color=black] {$\scr n - i$};
	\adot{(0, 1.25)} node[anchor=east, color=black] {$\scr i - 1$};
	\adot{(0.6, 0.1)} node[anchor=east, color=black] {};
	\teleportRR{0.3, 0.6}{0.3, 0.9};
	\end{tikzpicture}
	\ -\
	\begin{tikzpicture}[anchorbase]
	\draw[-] (0, -0.2) -- (0, 1);
	\draw[-] (0.6, -0.2) -- (0.6, 1);
	\teleportUU{0, 0.5}{0.6, 0.5};
	\adot{(0, 0.75)} node[anchor=east, color=black] {$\scr i - 1$};
	\adot{(0.6, 0.25)} node[anchor=west, color=black] {$\scr n - i$};
	\adot{(0.6, 0.05)} node[anchor=east, color=black] {};
	\end{tikzpicture}
	\\
	\overset{\eqref{rel:dotCrossingSlide}}{=}
	\begin{tikzpicture}[anchorbase]
	\draw[-] (0, 0) -- (1, 1);
	\draw[-] (1, 0) -- (0, 1);
	\adot{(0.4, 0.6)} node[anchor=west, color=black] {};
	\adot{(0.2, 0.8)} node[anchor=west, color=black] {$\scr n$};
	\end{tikzpicture}
	\ +\
	\begin{tikzpicture}[anchorbase]
	\draw[-] (0, 0) -- (0, 0.3) arc(180:0:0.3) -- (0.6, 0);
	\draw[-] (0, 1.5) -- (0, 1.2) arc(180:360:0.3) -- (0.6, 1.5);
	\adot{(0, 1.25)} node[anchor=east, color=black] {$\scr n$};
	\teleportRR{0.3, 0.6}{0.3, 0.9};
	\end{tikzpicture}
	\ -\
	\begin{tikzpicture}[anchorbase]
	\draw[-] (0, 0) -- (0, 1);
	\draw[-] (0.6, 0) -- (0.6, 1);
	\teleportUU{0, 0.5}{0.6, 0.5};
	\adot{(0, 0.75)} node[anchor=east, color=black] {$\scr n$};
	\end{tikzpicture}
	\ + \summ_{i = 1}^{n}\
	\begin{tikzpicture}[anchorbase]
	\draw[-] (0, -0.1) -- (0, 0.3) arc(180:0:0.3) -- (0.6, -0.1);
	\draw[-] (0, 1.5) -- (0, 1.2) arc(180:360:0.3) -- (0.6, 1.5);
	\adot{(0.6, 0.3)} node[anchor=west, color=black] {$\scr n - i$};
	\adot{(0, 1.25)} node[anchor=east, color=black] {$\scr i - 1$};
	\adot{(0.6, 0.1)} node[anchor=east, color=black] {};
	\teleportRR{0.3, 0.6}{0.3, 0.9};
	\end{tikzpicture}
	\ -\
	\begin{tikzpicture}[anchorbase]
	\draw[-] (0, -0.2) -- (0, 1);
	\draw[-] (0.6, -0.2) -- (0.6, 1);
	\teleportUU{0, 0.5}{0.6, 0.5};
	\adot{(0, 0.75)} node[anchor=east, color=black] {$\scr i - 1$};
	\adot{(0.6, 0.25)} node[anchor=west, color=black] {$\scr n - i$};
	\adot{(0.6, 0.05)} node[anchor=east, color=black] {};
	\end{tikzpicture}
	\\
	=
	\begin{tikzpicture}[anchorbase]
	\draw[-] (0, 0) -- (0.8, 0.8);
	\draw[-] (0.8, 0) -- (0, 0.8);
	\adot{(0.2, 0.6)} node[anchor=east, color=black] {$\scr n + 1$};
	\end{tikzpicture}
	\ + \summ_{i = 1}^{n + 1}\
	\begin{tikzpicture}[anchorbase]
	\draw[-] (0, 0) -- (0, 0.3) arc(180:0:0.3) -- (0.6, 0);
	\draw[-] (0, 1.5) -- (0, 1.2) arc(180:360:0.3) -- (0.6, 1.5);
	\adot{(0.6, 0.3)} node[anchor=west, color=black] {$\scr n + 1 - i$};
	\adot{(0, 1.25)} node[anchor=east, color=black] {$\scr i - 1$};
	\teleportRR{0.3, 0.6}{0.3, 0.9};
	\end{tikzpicture}
	\ -\
	\begin{tikzpicture}[anchorbase]
	\draw[-] (0, 0) -- (0, 1);
	\draw[-] (0.6, 0) -- (0.6, 1);
	\teleportUU{0, 0.5}{0.6, 0.5};
	\adot{(0, 0.75)} node[anchor=east, color=black] {$\scr i - 1$};
	\adot{(0.6, 0.25)} node[anchor=west, color=black] {$\scr n + 1 - i$};
	\end{tikzpicture},
	\end{multline*}
	as desired.
\end{proof}

The following result is a Frobenius superalgebra analogue of \cite[Lem.~3.4]{ruiSong2018affine}.

\begin{lem} \label{lem:dotBubbleRewriting}
	For each $n \in \N$ and $a \in A$, we have
	\begin{equation} \label{eq:dotBubbleRewriting}
	\begin{tikzpicture}[anchorbase]
	\draw[-] (0, 0.25) arc(360:0:0.375);
	\adot{(-0.075, 0.48)} node[anchor=west, color=black] {$\scr n$};
	\token{0,  0.25}{east}{a};
	%phantom label for correct vertical alignment
	\node at (-0.075, -0.06) {\vphantom{$\scr n$}};
	\end{tikzpicture}
	\ =\
	(-1)^{n} \begin{tikzpicture}[anchorbase]
	\draw[-] (0, 0.25) arc(360:0:0.375);
	\adot{(-0.075, 0.48)} node[anchor=west, color=black] {$\scr n$};
	\token{0,  0.25}{east}{a^\star};
	%phantom label for correct vertical alignment
	\node at (-0.075, -0.06) {\vphantom{$\scr n$}};
	\end{tikzpicture}
	\ +\
	\summ_{i = 1}^{n} (-1)^{i - 1} \left(\begin{tikzpicture}[anchorbase]
	\draw[-] (0, 0) -- (0, 0.4) arc(180:0:0.2) -- (0.4, 0) arc(360:180:0.2);
	\draw[-] (0, 1.1) arc(180:540:0.2);
	\teleportRR{0.2, 0.6}{0.2, 0.9};
	\adot{(0.4, 0.4)} node[anchor=west, color=black] {$\scr n - i$};
	\token{0.4,  0}{west}{a};
	\adot{(0.4, 1.1)} node[anchor=west, color=black] {$\scr i - 1$};
	\end{tikzpicture}	
	\ -\
	\begin{tikzpicture}[anchorbase]
	\draw[-] (0, 0) -- (0, 0.6) arc(180:0:0.2) -- (0.4, 0) arc(360:180:0.2);
	\teleportUU{0, 0.3}{0.4, 0.3};
	\adot{(0.4, 0.5)} node[anchor=west, color=black] {$\scr n - 1$};
	\token{0.4,  0.1}{west}{a};
	\end{tikzpicture}
	\right).
	\end{equation}
	(When $n = 0$, the sum over $i$ should be interpreted as 0.)
\end{lem}

\begin{proof}
	We compute:
	\begin{multline*}
	\begin{tikzpicture}[anchorbase]
	\draw[-] (0, 0.25) arc(360:0:0.375);
	\adot{(-0.075, 0.48)} node[anchor=west, color=black] {$\scr n$};
	\token{0,  0.25}{east}{a};
	%phantom label for correct vertical alignment
	\node at (-0.075, -0.06) {\vphantom{$\scr n$}};
	\end{tikzpicture}
	\overset{\eqref{rel:brauer}}{=}
	\begin{tikzpicture}[anchorbase]
	\draw[-] (0, -0.2) -- (0, 0.2) to[out=90, in=270] (0.4, 0.6) arc(0:180:0.2) to[out=270, in=90] (0.4, 0.2) -- (0.4, -0.2) arc(360:180:0.2);
	\adot{(0.4, 0.2)} node[anchor=west, color=black] {$\scr n$};
	\token{0.4,  -0.2}{west}{a};
	\end{tikzpicture}
	\overset{\eqref{eq:multiDotCrossingSlide}}{=}
	\begin{tikzpicture}[anchorbase]
	\draw[-] (0, -0.2) -- (0, 0.2) to[out=90, in=270] (0.4, 0.6) arc(0:180:0.2) to[out=270, in=90] (0.4, 0.2) -- (0.4, -0.2) arc(360:180:0.2);
	\token{0.4,  -0.2}{west}{a};
	\adot{(0, 0.6)} node[anchor=east, color=black] {$\scr n$};
	\end{tikzpicture}
	\ + \summ_{i = 1}^n\
	\begin{tikzpicture}[anchorbase]
	\draw[-] (0, 0) -- (0, 0.4) arc(180:0:0.2) -- (0.4, 0) arc(360:180:0.2);
	\draw[-] (0, 1.1) arc(180:540:0.2);
	\teleportRR{0.2, 0.6}{0.2, 0.9};
	\adot{(0.4, 0.4)} node[anchor=west, color=black] {$\scr n - i$};
	\token{0.4,  0}{west}{a};
	\adot{(0, 1.1)} node[anchor=east, color=black]{$\scr i - 1$};
	\end{tikzpicture}
	\ -\
	\begin{tikzpicture}[anchorbase]
	\draw[-] (0, 0) -- (0, 0.8) arc(180:0:0.2) -- (0.4, 0) arc(360:180:0.2);
	\teleportUU{0, 0.5}{0.4, 0.5};
	\adot{(0, 0.7)} node[anchor=east, color=black] {$\scr i - 1$};
	\token{0.4,  0}{west}{a};
	\adot{(0.4, 0.3)} node[anchor=west, color=black] {$\scr n - i$};
	\end{tikzpicture}
	\\
	\underset{\eqref{rel:dotTokenCommuting}}{\overset{\eqref{rel:dotCapSlide}}{=}}
	(-1)^{n}
	\begin{tikzpicture}[anchorbase]
	\draw[-] (0, -0.2) -- (0, 0.2) to[out=90, in=270] (0.4, 0.6) arc(0:180:0.2) to[out=270, in=90] (0.4, 0.2) -- (0.4, -0.2) arc(360:180:0.2);
	\token{0.4,  -0.2}{west}{a};
	\adot{(0.4, 0.6)} node[anchor=west, color=black] {$\scr n$};
	\end{tikzpicture}
	\ + \summ_{i = 1}^n (-1)^{i - 1}\left(
	\begin{tikzpicture}[anchorbase]
	\draw[-] (0, 0) -- (0, 0.4) arc(180:0:0.2) -- (0.4, 0) arc(360:180:0.2);
	\draw[-] (0, 1.1) arc(180:540:0.2);
	\teleportRR{0.2, 0.6}{0.2, 0.9};
	\adot{(0.4, 0.4)} node[anchor=west, color=black] {$\scr n - i$};
	\token{0.4,  0}{west}{a};
	\adot{(0.4, 1.1)} node[anchor=west, color=black]{$\scr i - 1$};
	\end{tikzpicture}
	\ -\
	\begin{tikzpicture}[anchorbase]
	\draw[-] (0, 0) -- (0, 0.6) arc(180:0:0.2) -- (0.4, 0) arc(360:180:0.2);
	\teleportUU{0, 0.3}{0.4, 0.3};
	\adot{(0.4, 0.5)} node[anchor=west, color=black] {$\scr n - 1$};
	\token{0.4,  0.1}{west}{a};
	\end{tikzpicture}
	\right)
	\\
	\underset{\eqref{rel:brauerBasics}}{\overset{\eqref{rel:tokens}}{=}}
	(-1)^{n}
	\begin{tikzpicture}[anchorbase, yscale=-1]
	\draw[-] (0, -0.2) -- (0, 0.2) to[out=90, in=270] (0.4, 0.6) arc(0:180:0.2) to[out=270, in=90] (0.4, 0.2) -- (0.4, -0.2) arc(360:180:0.2);
	\adot{(0.4, -0.25)} node[anchor=west, color=black] {$\scr n$};
	\token{0.4,  0.2}{west}{a^\star};
	\end{tikzpicture}
	\ + \summ_{i = 1}^n (-1)^{i - 1}\left(
	\begin{tikzpicture}[anchorbase]
	\draw[-] (0, 0) -- (0, 0.4) arc(180:0:0.2) -- (0.4, 0) arc(360:180:0.2);
	\draw[-] (0, 1.1) arc(180:540:0.2);
	\teleportRR{0.2, 0.6}{0.2, 0.9};
	\adot{(0.4, 0.4)} node[anchor=west, color=black] {$\scr n - i$};
	\token{0.4,  0}{west}{a};
	\adot{(0.4, 1.1)} node[anchor=west, color=black]{$\scr i - 1$};
	\end{tikzpicture}
	\ -\
	\begin{tikzpicture}[anchorbase]
	\draw[-] (0, 0) -- (0, 0.6) arc(180:0:0.2) -- (0.4, 0) arc(360:180:0.2);
	\teleportUU{0, 0.3}{0.4, 0.3};
	\adot{(0.4, 0.5)} node[anchor=west, color=black] {$\scr n - 1$};
	\token{0.4,  0.1}{west}{a};
	\end{tikzpicture}
	\right)\\
	\overset{\eqref{rel:brauerBasics}}{=}
	(-1)^{n} \begin{tikzpicture}[anchorbase]
	\draw[-] (0, 0.25) arc(360:0:0.375);
	\adot{(-0.075, 0.48)} node[anchor=west, color=black] {$\scr n$};
	\token{0,  0.25}{east}{a^\star};
	%phantom label for correct vertical alignment
	\node at (-0.075, -0.06) {\vphantom{$\scr n$}};
	\end{tikzpicture}
	\ +\
	\summ_{i = 1}^{n} (-1)^{i - 1} \left(\begin{tikzpicture}[anchorbase]
	\draw[-] (0, 0) -- (0, 0.4) arc(180:0:0.2) -- (0.4, 0) arc(360:180:0.2);
	\draw[-] (0, 1.1) arc(180:540:0.2);
	\teleportRR{0.2, 0.6}{0.2, 0.9};
	\adot{(0.4, 0.4)} node[anchor=west, color=black] {$\scr n - i$};
	\token{0.4,  0}{west}{a};
	\adot{(0.4, 1.1)} node[anchor=west, color=black] {$\scr i - 1$};
	\end{tikzpicture}	
	\ -\
	\begin{tikzpicture}[anchorbase]
	\draw[-] (0, 0) -- (0, 0.6) arc(180:0:0.2) -- (0.4, 0) arc(360:180:0.2);
	\teleportUU{0, 0.3}{0.4, 0.3};
	\adot{(0.4, 0.5)} node[anchor=west, color=black] {$\scr n - 1$};
	\token{0.4,  0.1}{west}{a};
	\end{tikzpicture}
	\right),
	\end{multline*}
	as desired.
\end{proof}

Note that taking $n = 0$ in \eqref{eq:dotBubbleRewriting} yields the identity $\begin{tikzpicture}[anchorbase]
\draw[-] (0, 0.25) arc(360:0:0.375);
\token{0,  0.25}{east}{a};
\end{tikzpicture} = \begin{tikzpicture}[anchorbase]
\draw[-] (0, 0.25) arc(360:0:0.375);
\token{0,  0.25}{east}{a^\star};
\end{tikzpicture}$. This special case previously appeared in \cite[(9.6)]{diagrammaticsRealSupergroups} (in the context of non-affine Frobenius Brauer categories).

Since $-^\star$ is an involution, we have that $A$ decomposes as the direct sum of its $1$-eigenspace and its $(-1)$-eigenspace, which we respectively denote $E_1$ and $E_{-1}$. Taking either $n$ to be odd and $a \in E_1$, or $n$ to be even and $a \in E_{-1}$, \eqref{eq:dotBubbleRewriting} yields
$$
2\ \begin{tikzpicture}[anchorbase]
\draw[-] (0, 0.25) arc(360:0:0.375);
\adot{(-0.075, 0.48)} node[anchor=west, color=black] {$\scr n$};
\token{0,  0.25}{east}{a};
%phantom label for correct vertical alignment
\node at (-0.075, -0.06) {\vphantom{$\scr n$}};
\end{tikzpicture}
\ =\
\summ_{i = 1}^{n} (-1)^{i - 1} \left(\begin{tikzpicture}[anchorbase]
\draw[-] (0, 0) -- (0, 0.4) arc(180:0:0.2) -- (0.4, 0) arc(360:180:0.2);
\draw[-] (0, 1.1) arc(180:540:0.2);
\teleportRR{0.2, 0.6}{0.2, 0.9};
\adot{(0.4, 0.4)} node[anchor=west, color=black] {$\scr n - i$};
\token{0.4,  0}{west}{a};
\adot{(0.4, 1.1)} node[anchor=west, color=black] {$\scr i - 1$};
\end{tikzpicture}	
\ -\
\begin{tikzpicture}[anchorbase]
\draw[-] (0, 0) -- (0, 0.6) arc(180:0:0.2) -- (0.4, 0) arc(360:180:0.2);
\teleportUU{0, 0.3}{0.4, 0.3};
\adot{(0.4, 0.5)} node[anchor=west, color=black] {$\scr n - 1$};
\token{0.4,  0.1}{west}{a};
\end{tikzpicture}
\right).$$
Note that there are at most $n - 1$ dots on each of the bubbles appearing on the right hand side of this identity. Applying this identity repeatedly with different choices of $n$, one can express an arbitrary bubble $\begin{tikzpicture}[anchorbase]
\draw[-] (0, 0.25) arc(360:0:0.375);
\adot{(-0.075, 0.48)} node[anchor=west, color=black] {$\scr n$};
\token{0,  0.25}{east}{a};
%phantom label for correct vertical alignment
\node at (-0.075, -0.06) {\vphantom{$n$}};
\end{tikzpicture}$ as a linear combination of bubbles with an even number of dots and a token labelled by an element of $E_1$, and bubbles with an odd number of dots and a token labelled by an element of $E_{-1}$. This is why we restrict our attention to bubbles of those types in the basis conjecture that follows. Note that when $-^\star = \id$, we have $E_1 = A$ and $E_{-1} = \{0\}$, and hence one can rewrite any bubble with an odd number of dots as a linear combination of bubbles with an even number of dots. This is reflected in the basis theorem for the affine Brauer category $\AB = \AB(\kk, \id)$, namely \cite[Thm.~B]{ruiSong2018affine}, in which all bubbles appearing in basis diagrams have an even number of dots.

For $r, s \in \N$, an $(r, s)$\emph{-Brauer diagram} (in normal form) is a string diagram representing a morphism in $\Hom_{\AB(A, -^\star)}(\go^{\otimes r}, \go^{\otimes s})$ such that:
\begin{itemize}
	\item There are no closed strings, i.e.\ no strings without endpoints;
	\item There are no tokens or dots on any string;
	\item No string has more than one critical point;
	\item No string intersects itself;
	\item No two strings cross each other more than once.
\end{itemize}
Each $(r, s)$-Brauer diagram induces a perfect matching of $\{1, \dotsc, r + s\}$ by pairing the endpoints of each string, numbered from left-to-right and bottom-to-top. For instance, the $(4, 6)$-Brauer diagram 
$$
\begin{tikzpicture}[anchorbase]
\draw[-] (0, 0) -- (0.5, 1);
\draw[-] (0, 1) arc(180:360:0.5);
\draw[-] (1.5, 0) -- (1.5, 1);
\draw[-] (2, 0) arc(180:0:0.25);
\draw[-] (2, 1) arc(180:360:0.25);
\node[] at (0, -0.25) {$1$};
\node[] at (1.5, -0.25) {$2$};
\node[] at (2, -0.25) {$3$};
\node[] at (2.5, -0.25) {$4$};
\node[] at (0, 1.25) {$5$};
\node[] at (0.5, 1.25) {$6$};
\node[] at (1, 1.25) {$7$};
\node[] at (1.5, 1.25) {$8$};
\node[] at (2, 1.25) {$9$};
\node[] at (2.5, 1.25) {$10$};
\end{tikzpicture}
$$
yields the matching $\{\{1, 6\}, \{2, 8\}, \{3, 4\}, \{5, 7\}, \{9, 10\} \}$. We define an equivalence relation on the set of $(r, s)$-Brauer diagrams by asserting that two diagrams are equivalent precisely when they induce the same matching of $\{1, \dotsc, r + s\}$. For $r, s \in \N$, let $\D(r, s)$ be a complete set of representatives for this equivalence relation on $(r, s)$-Brauer diagrams. Equivalent diagrams are equal as morphisms in $\AB(A, -^\star)$; see \cite[Prop.~2.5]{ruiSong2018affine}.

Fix a homogeneous basis $B_1$ for the eigenspace $E_1$ of $-^\star$, and a homogeneous basis $B_{-1}$ for the eigenspace $E_{-1}$.

\begin{defin}
	An \emph{admissible bubble} is a string diagram of the following form, representing a morphism in $\Hom_{\AB(A, -^\star)}(\one, \one)$:
	\begin{equation*}
	\begin{tikzpicture}[anchorbase]
	\draw[-] (0, 0.25) arc(360:0:0.375);
	\adot{(-0.075, 0.48)} node[anchor=west, color=black] {$\scr n$};
	\token{0,  0.25}{east}{a};
	%phantom label for correct vertical alignment
	\node at (-0.075, -0.06) {\vphantom{$\scr n$}};
	\end{tikzpicture},
	\end{equation*}
	where either $n$ is even and $a \in B_1$, or $n$ is odd and $a \in B_{-1}$. An \emph{admissible bubble chain} is a (potentially empty) set of admissible bubbles arranged in a horizontal line, such that all tokens on bubbles are at the same height. The bubbles in a chain may be ordered arbitrarily from left to right, i.e.\ we consider chains up to permutation of the constituent bubbles.
\end{defin}

For all $a, b \in A$ and $m, n \in \N$ we have:
$$
\begin{tikzpicture}[anchorbase]
\draw[-] (0, 0.25) arc(360:0:0.375);
\adot{(-0.075, 0.48)} node[anchor=west, color=black] {$\scr m$};
\token{0,  0.25}{east}{a};
%phantom label for correct vertical alignment
\node at (-0.075, -0.06) {\vphantom{$\scr n$}};
%second bubble
\draw[-] (1.25, 0.25) arc(360:0:0.375);
\adot{(1.175, 0.48)} node[anchor=west, color=black] {$\scr n$};
\token{1.25,  0.25}{east}{b};
\end{tikzpicture}
\overset{\eqref{eq:superInterchangeLaw}}{=}
\begin{tikzpicture}[anchorbase]
\draw[-] (0, 1.25) arc(360:0:0.375);
\adot{(-0.075, 1.48)} node[anchor=west, color=black] {$\scr m$};
\token{0,  1.25}{east}{a};
%second bubble
\draw[-] (1.25, 0.25) arc(360:0:0.375);
\adot{(1.175, 0.48)} node[anchor=west, color=black] {$\scr n$};
\token{1.25,  0.25}{east}{b};
\end{tikzpicture}
=
\begin{tikzpicture}[anchorbase]
\draw[-] (1.25, 1.25) arc(360:0:0.375);
\adot{(1.175, 1.48)} node[anchor=west, color=black] {$\scr m$};
\token{1.25,  1.25}{east}{a};
%second bubble
\draw[-] (0, 0.25) arc(360:0:0.375);
\adot{(-0.075, 0.48)} node[anchor=west, color=black] {$\scr n$};
\token{0,  0.25}{east}{b};
\end{tikzpicture}
\overset{\eqref{eq:superInterchangeLaw}}{=}
(-1)^{\bar{a}\bar{b}}
\begin{tikzpicture}[anchorbase]
\draw[-] (0, 0.25) arc(360:0:0.375);
\adot{(-0.075, 0.48)} node[anchor=west, color=black] {$\scr n$};
\token{0,  0.25}{east}{b};
%phantom label for correct vertical alignment
\node at (-0.075, -0.06) {\vphantom{$\scr n$}};
%second bubble
\draw[-] (1.25, 0.25) arc(360:0:0.375);
\adot{(1.175, 0.48)} node[anchor=west, color=black] {$\scr m$};
\token{1.25,  0.25}{east}{a};
\end{tikzpicture},
$$
and hence permuting a bubble chain changes the represented morphism by at most a sign. One could eliminate these signs by imposing conventions on the ordering of bubbles in chains, but this is not needed for our purposes.

For $r, s \in \N$, let $\D^{\bullet \circ}(r, s)$ be the set of all morphisms in $\AB(A, -^\star)$ obtained by taking an element of $\D(r, s)$ and performing the following steps:
\begin{itemize}
	\item Add a nonnegative number of dots, and one token labelled by an element of $B_1$ or $B_{-1}$, to one end of each string;
	\item Add an admissible bubble chain to the right of the Brauer diagram.
\end{itemize}
These added bubbles, dots, and tokens are subject to the following conventions:
\begin{itemize}
	\item If both ends of a string are on the top of the diagram, its token and dots appear near the left endpoint of the string, above all crossings, cups, and caps;
	\item If both ends of a string are on the bottom of the diagram, its token and dots appear near the right endpoint of the string, below all crossings, cups, and caps;
	\item If a string has ends at both the top and bottom of the diagram, its token and dots appear near the bottom endpoint the string, below all crossings, cups, and caps;
	\item All tokens near top endpoints are at the same height, and all tokens near bottom endpoints are at the same height;
	\item All dots appear just above the token on its string;
	\item All bubbles are vertically positioned strictly between any cups and caps appearing in the underlying $(r, s)$-Brauer diagram;
\end{itemize}
For instance, $\D^{\bullet \circ}(4, 6)$ contains (among many others) the elements
$$
\begin{tikzpicture}[anchorbase]
\draw[-] (0, 0) -- (0.5, 2);
\draw[-] (0, 2) arc(180:360:0.5);
\token{0.12, 1.67}{east}{a_6};
\adot{(0.01, 1.875)} node[anchor=east, color=black] {$\scr n_6$};
\draw[-] (1.5, 0) -- (1.5, 2);
\draw[-] (2, 0) -- (2, 0.25) arc(180:0:0.25) -- (2.5, 0);
\draw[-] (2, 2) -- (2, 1.75) arc(180:360:0.25) -- (2.5, 2);
\token{0.03125, 0.125}{east}{a_3};
\adot{(0.09375, 0.375)} node[anchor=east, color=black] {$\scr n_3$};
\token{1.5, 0.125}{east}{a_4};
\adot{(1.5, 0.375)} node[anchor=east, color=black] {$\scr n_4$};
\token{2.5, 0.125}{west}{a_5};
\adot{(2.48, 0.375)} node[anchor=west, color=black] {$\scr n_5$};
\token{2.02, 1.67}{east}{a_7};
\adot{(2, 1.875)} node[anchor=east, color=black] {$\scr n_7$};
\draw[-] (3.75, 1) arc(360:0:0.375);
\adot{(3.675, 1.25)} node[anchor=west, color=black] {$\scr 2 n_1$};
\token{3.75, 1}{east}{a_1};
\draw[-] (5.25, 1) arc(360:0:0.375);
\adot{(5.175, 1.25)} node[anchor=west, color=black] {$\scr 2n_2 + 1$};
\token{5.25, 1}{east}{a_2};
\end{tikzpicture},
$$
where $n_1, n_2, n_3, n_4, n_5, n_6, n_7 \in \N$, $a_1 \in B_1$, $a_2 \in B_{-1}$, and $a_3, a_4, a_5, a_6, a_7 \in B_1 \cup B_{-1}$.

\begin{conj} \label{con:basisConjecture}
	For all $r, s \in \N$, $\D^{\bullet\circ}(r, s)$ is a $\kk$-basis for $\Hom_{\AB(A, -^\star)}(\go^{\otimes r}, \go^{\otimes s})$.
\end{conj}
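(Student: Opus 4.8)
The plan is to establish Conjecture~\ref{con:basisConjecture} in the usual two steps: show that $\D^{\bullet\circ}(r,s)$ spans $\Hom_{\AB^\sigma(A,-^\star)}(\go^{\otimes r},\go^{\otimes s})$, and then show that it is linearly independent. The spanning half is a ``straightening'' argument of the kind familiar from the non-affine setting, while the independence half is the hard one; for it I would combine a triangular (dot-filtration) decomposition with the representation-theoretic input supplied by Theorem~\ref{thm:affineFunctor}, in the spirit of the arguments for $\AOB(A)$, $\B^\sigma(A,-^\star)$, and $\mathcal{NB}_t$ in \cite{Brundan_2021,diagrammaticsRealSupergroups,nilBrauer}.

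\emph{Spanning.} Starting from an arbitrary composite of generators, one first uses the token relations \eqref{rel:tokens} to merge the tokens lying on a common strand, the Frobenius relations in \eqref{rel:tokens} and \eqref{rel:brauerBasics} to transport the resulting single token to the designated endpoint of its strand, and \eqref{rel:dotTokenCommuting} to move dots past tokens. The substantive step is bringing the dots into standard position. Each use of \eqref{rel:dotCrossingSlide}, \eqref{rel:dotCapSlide}, or \eqref{rel:mirroredDotSlide} either transports a dot past a crossing or cap at no cost, or rewrites the diagram as a sum of two ``lower'' terms: a term carrying one fewer dot together with a teleporter (which, upon expanding the teleporter via its definition, becomes a sum of diagrams with two additional $B_A$-tokens and strictly fewer dots), and a term in which a cup--cap pair connected by a reflecting teleporter has been inserted, so that the underlying Brauer skeleton has strictly more cups and caps. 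One then fixes a partial order on diagrams --- say lexicographically by total number of dots and then by a complexity statistic for the Brauer skeleton --- relative to which all correction terms are strictly smaller; induction then reduces an arbitrary morphism to a $\kk$-combination of diagrams whose skeleton is one of the chosen representatives in $\D(r,s)$, each strand carrying a single $B_A$-token in the prescribed location with its dots stacked just above, i.e.\ to a $\kk$-combination of elements of $\D^{\bullet\circ}(r,s)$. Reducing the skeleton to its chosen representative proceeds exactly as in the non-affine case via \eqref{rel:brauer}, while carrying the tokens and dots along.

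\emph{Linear independence.} Here I would first isolate the case $r=s=0$: one expects $\Hom_{\AB^\sigma(A,-^\star)}(\one,\one)$ to be a polynomial algebra on the ``dotted bubbles'' (a closed loop carrying one $B_A$-token and some number of dots), and each hom-space to be free as a module over it, which would allow one to bootstrap to general $(r,s)$ by a base-change argument. For the core statement one wants a triangular decomposition: a $\kk$-linear isomorphism of the associated graded with respect to the filtration by total dot number, of the form $\Hom_{\AB^\sigma(A,-^\star)}(\go^{\otimes r},\go^{\otimes s}) \cong \Hom_{\B^\sigma(A,-^\star)}(\go^{\otimes r},\go^{\otimes s})\otimes \kk[\mathrm{dots}]$, where the first factor has the known basis of \cite{diagrammaticsRealSupergroups} and the second records the dot multiplicities strand by strand. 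Given the spanning step, what remains is to see that the $\D^{\bullet\circ}(r,s)$ elements stay independent in passing from $\B^\sigma$ to $\AB^\sigma$, equivalently that the dots ``act with generically distinct eigenvalues.'' For this one applies the superfunctor $\hat F_\Phi$ of Theorem~\ref{thm:affineFunctor} with $\g=\osp(\varphi)$ for $V=A^{m\mid n}$, and evaluates on a tensor product $V^{\otimes k}\otimes M(\lambda)$ with $M(\lambda)$ a generic Verma-type $\g$-supermodule: since the dot acts as $\nu(C\otimes 1 + 2\Omega)$, the commuting operators coming from dots on distinct strands have joint eigenvalues that are affine-linear in $\lambda$ and pairwise distinct for generic $\lambda$, so they separate distinct dot-multiplicity patterns over the non-affine part; letting $m,n\to\infty$ and invoking the basis theorem for $\B^\sigma(A,-^\star)$ from \cite{diagrammaticsRealSupergroups} then upgrades this to linear independence of all of $\D^{\bullet\circ}(r,s)$.

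\emph{The main obstacle.} The genuinely delicate part is the linear independence, and specifically the two places where genericity enters. First, one must check that the dot filtration is actually triangular --- that the correction terms generated in the spanning step are uniformly strictly lower, so that the associated graded is as described and no unexpected collapse occurs; producing an honest statistic on diagrams that is simultaneously respected by \eqref{rel:dotCrossingSlide}, \eqref{rel:dotCapSlide}, \eqref{rel:mirroredDotSlide}, and the relations \eqref{rel:brauer} used in skeleton-straightening is fiddly. Second, one must verify the genericity claim about the dot eigenvalues on $V^{\otimes k}\otimes M(\lambda)$ at the full Frobenius-superalgebra level of generality --- controlling the interaction of $C$ and $\Omega$ with the involution $-^\star$, the signs of the supergeometry of $\osp(\varphi)$, and the dotted bubbles in $\Hom_{\AB^\sigma(A,-^\star)}(\one,\one)$ --- and then pass to the limit $m,n\to\infty$. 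This is precisely the step that the analogues in \cite{ruiSong2018affine} and \cite{nilBrauer} carry out by hand in their respective settings, and porting it to the present generality is where the real work lies.
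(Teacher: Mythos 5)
First, note that the statement is a \emph{conjecture}: the paper does not prove it, but only sketches a potential strategy and provides supporting machinery. So your sketch is to be compared with the paper's sketch, not with a completed argument.

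Your spanning step is in line with the paper, which treats spanning as ``straightforward to show.''

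For linear independence you propose a triangular (dot-filtration) decomposition followed by a representation-theoretic separation: apply $\hat{F}_\Phi$ to $V^{\otimes k}\otimes M(\lambda)$ for generic Verma-type $\g$-supermodules, separate dot-multiplicity patterns by generic joint eigenvalues of the commuting dot operators, then let $m,n\to\infty$ and invoke the basis theorem for $\B^\sigma(A,-^\star)$. This is exactly a generalization of Rui and Song's argument for $\AB$, and the paper explicitly cautions that ``the proof of \cite[Thm.~B]{ruiSong2018affine} does not seem like it can be easily generalized to other choices of $A$.'' The obstruction is where you already flag it, but it is more serious than you suggest: for a general symmetric involutive Frobenius superalgebra $A$, the Lie superalgebra $\osp(\varphi)$ over $A$ and its Verma-type supermodules are far less tractable than $\mathfrak{so}_N$ or $\mathfrak{sp}_{2N}$ over $\kk$, and establishing the genericity of dot eigenvalues --- while accounting for the involution $-^\star$, the supersigns of $\osp(\varphi)$, and the structure of $\Hom_{\AB^\sigma(A,-^\star)}(\one,\one)$ --- would require substantial new input. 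You also implicitly lean on the basis theorem for $\B^\sigma(A,-^\star)$ from \cite{diagrammaticsRealSupergroups}, which itself is proved by a localization-embedding into an oriented category rather than by a limiting representation-theoretic argument.

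The paper instead proposes a route consistent with that non-affine proof: construct a superfunctor $G$ from $\AB^\sigma(A,-^\star)$ into the additive envelope of a localization $\L$ of $\AOB(A)$, obtained by inverting the ``affine teleporters'' of Definition~\ref{def:localizedCategory} and using the ``internal bubbles'' of Definition~\ref{defi:internalBubbles}, then transfer the known basis theorem for $\AOB(A)$ (the $k=0$ case of \cite[Thm.~7.2]{Brundan_2021}) to see that the image of $\D^{\bullet\circ}(r,s)$ is independent. This avoids any analysis of $\osp(\varphi)$-representations over exotic $A$, at the cost of verifying that $G$ respects all the relations of $\AB^\sigma(A,-^\star)$; the paper notes that the braid relation has not yet been checked even for $A=\kk$. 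In short, your route and the paper's are genuinely different: yours is representation-theoretic and would require new control over Verma modules for $\osp(\varphi)$ over $A$, a point the paper explicitly identifies as the reason that approach seems hard; the paper's route is diagrammatic and reduces to the well-understood oriented category, but still needs the functor $G$ to be completed.
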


In light of the reductions made possible by Lemma \ref{lem:dotBubbleRewriting}, Conjecture \ref{con:basisConjecture} is the natural statement to expect from the diagrammatic perspective. When $A = \kk$, Conjecture \ref{con:basisConjecture} essentially recovers the basis result for the affine Brauer category $\AB$ proved by Rui and Song in \cite[Thm.~B]{ruiSong2018affine}, though their conventions for the positions of dots and locations of bubbles are slightly different than those in the present paper.

\ 

We conclude by outlining a potential method to prove Conjecture \ref{con:basisConjecture}. First, arguments analogous to those for previously-studied categories yield that $\D^{\bullet\circ}(r, s)$ spans $\Hom_{\AB(A, -^\star)}(\go^{\otimes r}, \go^{\otimes s})$. (The arguments for $\AB$ and $\B(A, -^\star)$ can be found in the proofs of  \cite[Prop~3.7(a)]{ruiSong2018affine} and \cite[Thm.~9.6]{diagrammaticsRealSupergroups}, respectively.) Briefly, the relations \eqref{rel:dotCrossingSlide}, \eqref{rel:dotCapSlide}, \eqref{rel:dotTokenCommuting}, and \eqref{rel:mirroredDotSlide} can be used to move all affine dots to the ends of strings. One can then use the basis theorem for the non-affine category $\B(A, -^\star)$, \cite[Thm.~9.6]{diagrammaticsRealSupergroups}, to manipulate the dot-free parts of diagrams. Finally, \eqref{eq:dotBubbleRewriting} can be used in the manner described above to replace all bubbles with admissible ones, yielding a linear combination of diagrams from $\D^{\bullet\circ}(r, s)$. Showing linear independence is more complicated. The key idea is to embed $\AB(A, -^\star)$ into the additive envelope of a localized version of the oriented Frobenius Brauer category $\AOB(A)$ mentioned in the introduction, and then use the basis result for $\AOB(A)$ (namely the central charge $k = 0$ case of \cite[Thm.~7.2]{Brundan_2021}) to show that the image of $\D^{\bullet\circ}(r, s)$ is linearly independent. A similar method was used to prove \cite[Thm.~9.6]{diagrammaticsRealSupergroups}, which gives homomorphism space bases for the non-affine categories $\B(A, -^\star)$, and \cite[Thm.~5.1]{nilBrauer}, which does the same for the nil-Brauer category $\mathcal{NB}_t$. In turn, these proofs employ arguments similar to those used to prove \cite[Thm.~5.12]{Brundan_2021} and \cite[Thm.~5.4]{degenerateHeisenberg}, respectively; these latter two theorems establish the existence of functors from a Heisenberg category into the additive envelope of a localized symmetric product of two Heisenberg categories.

The author of the current paper has begun investigating the $A = \kk$ case of this proof technique. As noted above, the basis theorem is already known in this case, but the proof of \cite[Thm.~B]{ruiSong2018affine} does not seem like it can be easily generalized to other choices of $A$. The following definitions are for the case $A = \kk$. We assume the reader is already familiar with the definition of the affine oriented Brauer category $\AOB$, which can be found in \cite[Def.~1.2]{ruiSong2018affine}, or presented slightly differently in \cite[Def.~4.3]{mcsween2021affine} (with $A = \kk$).

\begin{defin} \label{def:localizedCategory}
	We define the following morphisms in $\AOB$:
	\begin{equation} \label{def:dottedDumbbells}
	\begin{tikzpicture}[anchorbase]
	\draw[->] (0, 0) -- (0, 0.6);
	\draw[->] (0.4, 0) -- (0.4, 0.6);
	\dottedlink{0, 0.3}{0.4, 0.3};
	\end{tikzpicture}
	\ =\
	\begin{tikzpicture}[anchorbase]
	\draw[->] (0, 0) -- (0, 0.6);
	\draw[->] (0.4, 0) -- (0.4, 0.6);
	\adot{(0, 0.3)};
	\end{tikzpicture}
	+
	\begin{tikzpicture}[anchorbase]
	\draw[->] (0, 0) -- (0, 0.6);
	\draw[->] (0.4, 0) -- (0.4, 0.6);
	\adot{(0.4, 0.3)};
	\end{tikzpicture}, \quad 
	\begin{tikzpicture}[anchorbase]
	\draw[<-] (0, 0) -- (0, 0.6);
	\draw[->] (0.4, 0) -- (0.4, 0.6);
	\dottedlink{0, 0.3}{0.4, 0.3};
	\end{tikzpicture}
	\ =\
	\begin{tikzpicture}[anchorbase]
	\draw[<-] (0, 0) -- (0, 0.6);
	\draw[->] (0.4, 0) -- (0.4, 0.6);
	\adot{(0, 0.3)};
	\end{tikzpicture}
	+
	\begin{tikzpicture}[anchorbase]
	\draw[<-] (0, 0) -- (0, 0.6);
	\draw[->] (0.4, 0) -- (0.4, 0.6);
	\adot{(0.4, 0.3)};
	\end{tikzpicture}, \quad 
	\begin{tikzpicture}[anchorbase]
	\draw[->] (0, 0) -- (0, 0.6);
	\draw[<-] (0.4, 0) -- (0.4, 0.6);
	\dottedlink{0, 0.3}{0.4, 0.3};
	\end{tikzpicture} 
	\ =\
	\begin{tikzpicture}[anchorbase]
	\draw[->] (0, 0) -- (0, 0.6);
	\draw[<-] (0.4, 0) -- (0.4, 0.6);
	\adot{(0, 0.3)};
	\end{tikzpicture} 
	+
	\begin{tikzpicture}[anchorbase]
	\draw[->] (0, 0) -- (0, 0.6);
	\draw[<-] (0.4, 0) -- (0.4, 0.6);
	\adot{(0.4, 0.3)};
	\end{tikzpicture}, \quad
	\begin{tikzpicture}[anchorbase]
	\draw[<-] (0, 0) -- (0, 0.6);
	\draw[<-] (0.4, 0) -- (0.4, 0.6);
	\dottedlink{0, 0.3}{0.4, 0.3};
	\end{tikzpicture}
	\ =\
	\begin{tikzpicture}[anchorbase]
	\draw[<-] (0, 0) -- (0, 0.6);
	\draw[<-] (0.4, 0) -- (0.4, 0.6);
	\adot{(0, 0.3)};
	\end{tikzpicture}
	+
	\begin{tikzpicture}[anchorbase]
	\draw[<-] (0, 0) -- (0, 0.6);
	\draw[<-] (0.4, 0) -- (0.4, 0.6);
	\adot{(0.4, 0.3)};
	\end{tikzpicture}.
	\end{equation}
	
	Let $\L$ denote the category obtained from $\AOB$ by localizing at $\begin{tikzpicture}[anchorbase]
	\draw[->] (0, 0) -- (0, 0.6);
	\draw[->] (0.4, 0) -- (0.4, 0.6);
	\dottedlink{0, 0.3}{0.4, 0.3};
	\end{tikzpicture}$, $\begin{tikzpicture}[anchorbase]
	\draw[<-] (0, 0) -- (0, 0.6);
	\draw[->] (0.4, 0) -- (0.4, 0.6);
	\dottedlink{0, 0.3}{0.4, 0.3};
	\end{tikzpicture}$, $\begin{tikzpicture}[anchorbase]
	\draw[->] (0, 0) -- (0, 0.6);
	\draw[<-] (0.4, 0) -- (0.4, 0.6);
	\dottedlink{0, 0.3}{0.4, 0.3};
	\end{tikzpicture}$, and $\begin{tikzpicture}[anchorbase]
	\draw[<-] (0, 0) -- (0, 0.6);
	\draw[<-] (0.4, 0) -- (0.4, 0.6);
	\dottedlink{0, 0.3}{0.4, 0.3};
	\end{tikzpicture}$. That is, we adjoin two-sided inverses for these morphisms, which we denote $\begin{tikzpicture}[anchorbase]
	\draw[->] (0, 0) -- (0, 0.6);
	\draw[->] (0.4, 0) -- (0.4, 0.6);
	\link{0, 0.3}{0.4, 0.3};
	\end{tikzpicture}$, $\begin{tikzpicture}[anchorbase]
	\draw[<-] (0, 0) -- (0, 0.6);
	\draw[->] (0.4, 0) -- (0.4, 0.6);
	\link{0, 0.3}{0.4, 0.3};
	\end{tikzpicture}$, $\begin{tikzpicture}[anchorbase]
	\draw[->] (0, 0) -- (0, 0.6);
	\draw[<-] (0.4, 0) -- (0.4, 0.6);
	\link{0, 0.3}{0.4, 0.3};
	\end{tikzpicture}$, and $\begin{tikzpicture}[anchorbase]
	\draw[<-] (0, 0) -- (0, 0.6);
	\draw[<-] (0.4, 0) -- (0.4, 0.6);
	\link{0, 0.3}{0.4, 0.3};
	\end{tikzpicture}$, respectively. These morphisms are called \emph{affine teleporters}. This definition immediately implies e.g.\ $ \begin{tikzpicture}[anchorbase]
	\draw[->] (0, 0) -- (0, 0.6);
	\draw[->] (0.4, 0) -- (0.4, 0.6);
	\link{0, 0.4}{0.4, 0.4};
	\adot{(0, 0.2)};
	\end{tikzpicture}
	\ =\
	\begin{tikzpicture}[anchorbase]
	\draw[->] (0, 0) -- (0, 0.6);
	\draw[->] (0.4, 0) -- (0.4, 0.6);
	\end{tikzpicture}
	-
	\begin{tikzpicture}[anchorbase]
	\draw[->] (0, 0) -- (0, 0.6);
	\draw[->] (0.4, 0) -- (0.4, 0.6);
	\link{0, 0.4}{0.4, 0.4};
	\adot{(0.4, 0.2)};
	\end{tikzpicture}\ .$
	
	We also define the following morphisms in $\L$:
	$$\begin{tikzpicture}[anchorbase]
	\draw[->] (0, 0) -- (0, 0.6);
	\bdot{(0, 0.3)};
	\end{tikzpicture} = 2\ \begin{tikzpicture}[anchorbase]
	\draw[->] (0,0) -- (0,0.6) arc(180:0:0.2) -- (0.4,0.4) arc(180:360:0.2) -- (0.8,1);
	\link{0, 0.5}{0.4, 0.5};
	\end{tikzpicture}\ , \quad \quad \begin{tikzpicture}[anchorbase]
	\draw [<-] (0, 0) -- (0, 0.6);
	\bdot{(0, 0.3)};
	\end{tikzpicture} = \begin{tikzpicture}[anchorbase]
	\draw[<-] (0,0) -- (0,0.6) arc(180:0:0.2) -- (0.4,0.4) arc(180:360:0.2) -- (0.8,1);
	\bdot{(0.4, 0.5)};
	\end{tikzpicture}\ .$$
	These are the two-sided inverses of $\begin{tikzpicture}[anchorbase]
	\draw[->] (0, 0) -- (0, 0.6);
	\adot{(0, 0.3)};
	\end{tikzpicture}$ and $\begin{tikzpicture}[anchorbase]
	\draw [<-] (0, 0) -- (0, 0.6);
	\adot{(0, 0.3)};
	\end{tikzpicture}$, respectively.
\end{defin}

\begin{defin} \label{defi:internalBubbles}
	We define the following \emph{internal bubble} morphisms in $\L$:
	\begin{equation} \label{def:internalBubbles}
	\begin{tikzpicture}[anchorbase]
	\draw[->] (0, 0) -- (0, 0.8);
	\purpledot{(0, 0.4)};
	\end{tikzpicture}
	\ =\
	\begin{tikzpicture}[anchorbase]
	\draw[->] (0, 0) -- (0, 0.6);
	\end{tikzpicture}
	+
	\begin{tikzpicture}[anchorbase]
	\draw[->] (0, 0) -- (0, 0.6);
	\draw[->] (0.6, 0.3) arc(360:0:0.2);
	\link{0, 0.3}{0.2, 0.3};
	\end{tikzpicture}
	- \frac{1}{2}\ 
	\begin{tikzpicture}[anchorbase] \draw[->] (0, 0) -- (0, 0.6); \bdot{(0, 0.3)}; \end{tikzpicture}
	- \frac{1}{2}\ 
	\begin{tikzpicture}[anchorbase]
	\draw[->] (0, 0) -- (0, 0.8);
	\draw[->] (0.6, 0.3) arc(360:0:0.2);
	\link{0, 0.3}{0.2, 0.3};
	\bdot{(0, 0.5)};
	\end{tikzpicture}, \quad
	\begin{tikzpicture}[anchorbase]
	\draw[->] (0, 0) -- (0, 0.8);
	\orangedot{(0, 0.4)};
	\end{tikzpicture}
	\ =\
	\begin{tikzpicture}[anchorbase]
	\draw[->] (0, 0) -- (0, 0.6);
	\end{tikzpicture}
	-
	\begin{tikzpicture}[anchorbase]
	\draw[->] (0, 0) -- (0, 0.6);
	\draw[->] (-0.6, 0.3) arc(180:540:0.2);
	\link{-0.2, 0.3}{0, 0.3};
	\end{tikzpicture}
	+ \frac{1}{2}\ 
	\begin{tikzpicture}[anchorbase]
	\draw[->] (0, 0) -- (0, 0.6);
	\bdot{(0, 0.3)};
	\end{tikzpicture}
	-\frac{1}{2}\ 
	\begin{tikzpicture}[anchorbase]
	\draw[->] (0, 0) -- (0, 0.8);
	\draw[->] (-0.6, 0.3) arc(180:540:0.2);
	\link{-0.2, 0.3}{0, 0.3};
	\bdot{(0, 0.5)};
	\end{tikzpicture}, \quad
	\begin{tikzpicture}[anchorbase]
	\draw[<-] (0, 0) -- (0, 0.8);
	\purpledot{(0, 0.4)};
	\end{tikzpicture}
	\ =\
	\begin{tikzpicture}[anchorbase]
	\draw[<-] (0,0) -- (0,0.6) arc(180:0:0.2) -- (0.4,0.4) arc(180:360:0.2) -- (0.8,1);
	\purpledot{(0.4, 0.5)};
	\end{tikzpicture}, \quad
	\begin{tikzpicture}[anchorbase]
	\draw[<-] (0, 0) -- (0, 0.8);
	\orangedot{(0, 0.4)};
	\end{tikzpicture}
	\ =\
	\begin{tikzpicture}[anchorbase]
	\draw[<-] (0,0) -- (0,0.6) arc(180:0:0.2) -- (0.4,0.4) arc(180:360:0.2) -- (0.8,1);
	\orangedot{(0.4, 0.5)};
	\end{tikzpicture}.
	\end{equation}
\end{defin}

Using the relations from Definition \ref{def:localizedCategory}, one can show that the two orientations of internal bubbles are mutual inverses. Our conjectured functor $G \colon \AB \to \text{Add}(\L)$ is defined as follows: the generating object $\go$ of $\AB$ gets sent to $\goup \oplus \godown$, and the generating morphisms are mapped via
	$$
	\begin{tikzpicture}[anchorbase]
	\draw[-] (0, 0) -- (0, 0.7);
	\adot{(0, 0.35)};
	\end{tikzpicture}
	\mapsto
	\begin{tikzpicture}[anchorbase]
	\draw[->] (0, 0) -- (0, 0.7);
	\adot{(0, 0.35)};
	\end{tikzpicture}
	-
	\begin{tikzpicture}[anchorbase]
	\draw[<-] (0, 0) -- (0, 0.7);
	\adot{(0, 0.35)};
	\end{tikzpicture}, \quad \quad 
	\begin{tikzpicture}[anchorbase]
	\draw[-] (0, 0) -- (0, 0.4) arc(180:0:0.3) -- (0.6, 0);
	\end{tikzpicture}
	\mapsto
	\begin{tikzpicture}[anchorbase]
	\draw[->] (0, 0) -- (0, 0.4) arc(0:180:0.3) -- (-0.6, 0); 
	\end{tikzpicture}
	+
	\begin{tikzpicture}[anchorbase]
	\draw[->] (0, 0) -- (0, 0.4) arc(180:0:0.3) -- (0.6, 0);
	\orangedot{(0, 0.3)};
	\end{tikzpicture}, \quad \quad
	\begin{tikzpicture}[anchorbase]
	\draw[-] (0, 0) -- (0, -0.4) arc(180:360:0.3) -- (0.6, 0);
	\end{tikzpicture} 
	\mapsto
	\begin{tikzpicture}[anchorbase]
	\draw[->] (0, 0) -- (0, -0.4) arc(360:180:0.3) -- (-0.6, 0);
	\end{tikzpicture}
	+
	\begin{tikzpicture}[anchorbase]
	\draw[->] (0, 0) -- (0, -0.4) arc(180:360:0.3) -- (0.6, 0);
	\purpledot{(0.6, -0.3)};
	\end{tikzpicture},$$
	$$\begin{tikzpicture}[anchorbase] 
	\draw[-] (0, 0) -- (1, 1);
	\draw[-] (1, 0) -- (0, 1);
	\end{tikzpicture}
	\mapsto 
	\begin{tikzpicture}[anchorbase] 
	\draw[->] (0, 0) -- (1, 1);
	\draw[->] (1, 0) -- (0, 1);
	\end{tikzpicture} 
	+
	\begin{tikzpicture}[anchorbase] 
	\draw[<-] (0, 0) -- (1, 1);
	\draw[<-] (1, 0) -- (0, 1);
	\end{tikzpicture}
	+ 
	\begin{tikzpicture}[anchorbase] 
	\draw[<-] (0, 0) -- (1, 1);
	\draw[->] (1, 0) -- (0, 1);
	\end{tikzpicture} 
	+ 
	\begin{tikzpicture}[anchorbase] 
	\draw[->] (0, 0) -- (1, 1);
	\draw[<-] (1, 0) -- (0, 1);
	\purpledot{(0.75, 0.75)};
	\orangedot{(0.25, 0.25)};
	\end{tikzpicture}
	+
	\begin{tikzpicture}[anchorbase]
	\draw[->] (0, 0) -- (0, 1);
	\draw[<-] (0.4, 0) -- (0.4, 1);
	\link{0, 0.5}{0.4, 0.5};
	\end{tikzpicture} 
	- 
	\begin{tikzpicture}[anchorbase]
	\draw[<-] (0, 0) -- (0, 1);
	\draw[->] (0.4, 0) -- (0.4, 1);
	\link{0, 0.5}{0.4, 0.5};
	\end{tikzpicture}
	+
	\begin{tikzpicture}[anchorbase]
	\draw[->] (0, 0.15) -- (0, -0.4) arc(180:360:0.3) -- (0.6, 0.15);
	\purpledot{(0.6, -0.3)};
	\draw[->] (-0.2, -0.85) -- (-0.2, -0.3) arc(0:180:0.3) -- (-0.8, -0.85);
	\link{-0.2, -0.3}{0, -0.3};
	\end{tikzpicture}
	-
	\begin{tikzpicture}[anchorbase]
	\draw[<-] (0, 0.15) -- (0, -0.4) arc(180:360:0.3) -- (0.6, 0.15);
	\draw[<-] (-0.2, -0.85) -- (-0.2, -0.3) arc(0:180:0.3) -- (-0.8, -0.85);
	\orangedot{(-0.8, -0.5)};
	\link{-0.2, -0.3}{0, -0.3};
	\end{tikzpicture}\ .$$
The author has confirmed that $G$ preserves almost all of the defining relations for $\AB$; the only one remaining is the braid relation, $\begin{tikzpicture}[anchorbase]
\draw[-] (0, 0) -- (0, 0.2) to[out=90, in=270] (0.4, 0.6) to[out=90, in=270] (0.8, 1) -- (0.8, 1.6);
\draw[-] (0.4, 0) -- (0.4, 0.2) to[out=90, in=270] (0, 0.6) -- (0, 1) to[out=90, in=270] (0.4, 1.4) -- (0.4, 1.6);
\draw[-] (0.8, 0) -- (0.8, 0.6) to[out=90, in=270] (0.4, 1) to[out=90, in=270] (0, 1.4) -- (0, 1.6);
\end{tikzpicture}
\ =\
\begin{tikzpicture}[anchorbase, xscale=-1]
\draw[-] (0, 0) -- (0, 0.2) to[out=90, in=270] (0.4, 0.6) to[out=90, in=270] (0.8, 1) -- (0.8, 1.6);
\draw[-] (0.4, 0) -- (0.4, 0.2) to[out=90, in=270] (0, 0.6) -- (0, 1) to[out=90, in=270] (0.4, 1.4) -- (0.4, 1.6);
\draw[-] (0.8, 0) -- (0.8, 0.6) to[out=90, in=270] (0.4, 1) to[out=90, in=270] (0, 1.4) -- (0, 1.6);
\end{tikzpicture}\ .$
Briefly, the definition of $G$ was obtained by first assuming that, in analogy to the definition of the functor from \cite[Thm.~4.2]{nilBrauer}, $G$ maps the dot, cap, and cup as noted above, where the internal bubbles are two initially-unknown endomorphisms of $\goup$.
Calculations in a cyclotomic quotient of $\AOB$ associated to the minimal polynomials of the actions of $\begin{tikzpicture}[anchorbase]
\draw[->] (0, 0) -- (0, 0.7);
\adot{(0, 0.35)};
\end{tikzpicture}$ and
$\begin{tikzpicture}[anchorbase]
\draw[<-] (0, 0) -- (0, 0.7);
\adot{(0, 0.35)};
\end{tikzpicture}$ then allow one to deduce candidates for the definitions of the internal bubbles and the action of $G$ on the crossing, as given above.

\bibliographystyle{alphaurl}
\bibliography{UnorientedFrobenius}

\newcommand{\etalchar}[1]{$^{#1}$}
\begin{thebibliography}{BDEA{\etalchar{+}}20}

\bibitem[Abr97]{abrams_1997}
Lowell~E. Abrams.
\newblock {\em Frobenius algebra structures in topological quantum field theory
  and quantum cohomology}.
\newblock ProQuest LLC, Ann Arbor, MI, 1997.
\newblock Thesis (Ph.D.)--The Johns Hopkins University.
\newblock URL:
  \url{http://gateway.proquest.com/openurl?url_ver=Z39.88-2004&rft_val_fmt=info:ofi/fmt:kev:mtx:dissertation&res_dat=xri:pqdiss&rft_dat=xri:pqdiss:9730655}.

\bibitem[BCNR17]{Brundan_2017}
Jonathan Brundan, Jonathan Comes, David Nash, and Andrew Reynolds.
\newblock A basis theorem for the affine oriented {B}rauer category and its
  cyclotomic quotients.
\newblock {\em Quantum Topol.}, 8(1):75--112, 2017.
\newblock \href {https://doi.org/10.4171/QT/87} {\path{doi:10.4171/QT/87}}.

\bibitem[BDEA{\etalchar{+}}19]{balagovic2019}
Martina Balagovic, Zajj Daugherty, Inna Entova-Aizenbud, Iva Halacheva, Johanna
  Hennig, Mee~Seong Im, Gail Letzter, Emily Norton, Vera Serganova, and
  Catharina Stroppel.
\newblock Translation functors and decomposition numbers for the periplectic
  {L}ie superalgebra {$\mathfrak{p}(n)$}.
\newblock {\em Math. Res. Lett.}, 26(3):643--710, 2019.
\newblock \href {https://doi.org/10.4310/MRL.2019.v26.n3.a2}
  {\path{doi:10.4310/MRL.2019.v26.n3.a2}}.

\bibitem[BDEA{\etalchar{+}}20]{affineVW}
Martina Balagovic, Zajj Daugherty, Inna Entova-Aizenbud, Iva Halacheva, Johanna
  Hennig, Mee~Seong Im, Gail Letzter, Emily Norton, Vera Serganova, and
  Catharina Stroppel.
\newblock The affine {VW} supercategory.
\newblock {\em Selecta Math. (N.S.)}, 26(2):Paper No. 20, 42, 2020.
\newblock \href {https://doi.org/10.1007/s00029-020-0541-4}
  {\path{doi:10.1007/s00029-020-0541-4}}.

\bibitem[BE17]{monoidalSupercategories}
Jonathan Brundan and Alexander~P. Ellis.
\newblock Monoidal supercategories.
\newblock {\em Comm. Math. Phys.}, 351(3):1045--1089, 2017.
\newblock \href {https://doi.org/10.1007/s00220-017-2850-9}
  {\path{doi:10.1007/s00220-017-2850-9}}.

\bibitem[Bra37]{brauerAlgebras}
Richard Brauer.
\newblock On algebras which are connected with the semisimple continuous
  groups.
\newblock {\em Ann. of Math. (2)}, 38(4):857--872, 1937.
\newblock \href {https://doi.org/10.2307/1968843} {\path{doi:10.2307/1968843}}.

\bibitem[BSW21a]{degenerateHeisenberg}
Jonathan Brundan, Alistair Savage, and Ben Webster.
\newblock The degenerate {H}eisenberg category and its {G}rothendieck ring,
  2021.
\newblock \href {https://arxiv.org/abs/1812.03255} {\path{arXiv:1812.03255}}.

\bibitem[BSW21b]{Brundan_2021}
Jonathan Brundan, Alistair Savage, and Ben Webster.
\newblock Foundations of {F}robenius {H}eisenberg categories.
\newblock {\em J. Algebra}, 578:115--185, 2021.
\newblock \href {https://doi.org/10.1016/j.jalgebra.2021.02.025}
  {\path{doi:10.1016/j.jalgebra.2021.02.025}}.

\bibitem[BSW22]{Brundan_2022}
Jonathan Brundan, Alistair Savage, and Ben Webster.
\newblock Quantum {F}robenius {H}eisenberg categorification.
\newblock {\em J. Pure Appl. Algebra}, 226(1):Paper No. 106792, 50, 2022.
\newblock \href {https://doi.org/10.1016/j.jpaa.2021.106792}
  {\path{doi:10.1016/j.jpaa.2021.106792}}.

\bibitem[BW89]{BWalgebras}
Joan~S. Birman and Hans Wenzl.
\newblock Braids, link polynomials and a new algebra.
\newblock {\em Trans. Amer. Math. Soc.}, 313(1):249--273, 1989.
\newblock \href {https://doi.org/10.2307/2001074} {\path{doi:10.2307/2001074}}.

\bibitem[BWW23]{nilBrauer}
Jonathan Brundan, Weiqiang Wang, and Ben Webster.
\newblock The nil-{B}rauer category.
\newblock {\em Ann. Rep. Theory}, 1(1):21--58, 2023.
\newblock \href {https://doi.org/10.5802/art.2} {\path{doi:10.5802/art.2}}.

\bibitem[GRS22]{affineKauffman}
Mengmeng Gao, Hebing Rui, and Linliang Song.
\newblock A basis theorem for the affine {K}auffman category and its cyclotomic
  quotients.
\newblock {\em J. Algebra}, 608:774--846, 2022.
\newblock \href {https://doi.org/10.1016/j.jalgebra.2022.07.005}
  {\path{doi:10.1016/j.jalgebra.2022.07.005}}.

\bibitem[Kho06]{linkHomology}
Mikhail Khovanov.
\newblock Link homology and {F}robenius extensions.
\newblock {\em Fund. Math.}, 190:179--190, 2006.
\newblock \href {https://doi.org/10.4064/fm190-0-6}
  {\path{doi:10.4064/fm190-0-6}}.

\bibitem[KM20]{generalizedSchurAlgebras}
Alexander Kleshchev and Robert Muth.
\newblock Generalized {S}chur algebras.
\newblock {\em Algebra Number Theory}, 14(2):501--544, 2020.
\newblock \href {https://doi.org/10.2140/ant.2020.14.501}
  {\path{doi:10.2140/ant.2020.14.501}}.

\bibitem[KM22]{schurifying}
Alexander Kleshchev and Robert Muth.
\newblock Schurifying quasi-hereditary algebras.
\newblock {\em Proc. Lond. Math. Soc. (3)}, 125(3):626--680, 2022.
\newblock \href {https://doi.org/10.1112/plms.12466}
  {\path{doi:10.1112/plms.12466}}.

\bibitem[LZ15]{brauerCategory}
Gustav Lehrer and Ruibin Zhang.
\newblock The {B}rauer category and invariant theory.
\newblock {\em J. Eur. Math. Soc. (JEMS)}, 17(9):2311--2351, 2015.
\newblock \href {https://doi.org/10.4171/JEMS/558}
  {\path{doi:10.4171/JEMS/558}}.

\bibitem[MS21]{affinizationMonoidalCategories}
Youssef Mousaaid and Alistair Savage.
\newblock Affinization of monoidal categories.
\newblock {\em J. \'{E}c. polytech. Math.}, 8:791--829, 2021.
\newblock \href {https://doi.org/10.5802/jep.158} {\path{doi:10.5802/jep.158}}.

\bibitem[MS23]{mcsween2021affine}
Alexandra McSween and Alistair Savage.
\newblock Affine oriented {F}robenius {B}rauer categories.
\newblock {\em Comm. Algebra}, 51(2):742--756, 2023.
\newblock \href {https://doi.org/10.1080/00927872.2022.2113401}
  {\path{doi:10.1080/00927872.2022.2113401}}.

\bibitem[Mur87]{Malgebras}
Jun Murakami.
\newblock The {K}auffman polynomial of links and representation theory.
\newblock {\em Osaka J. Math.}, 24(4):745--758, 1987.
\newblock URL: \url{http://projecteuclid.org/euclid.ojm/1200780357}.

\bibitem[OR07]{orellana2004affine}
Rosa Orellana and Arun Ram.
\newblock Affine braids, {M}arkov traces and the category {$\mathcal{O}$}.
\newblock In {\em Algebraic groups and homogeneous spaces}, volume~19 of {\em
  Tata Inst. Fund. Res. Stud. Math.}, pages 423--473. Tata Inst. Fund. Res.,
  Mumbai, 2007.
\newblock URL: \url{https://arxiv.org/abs/math/0401317}.

\bibitem[RS19]{ruiSong2018affine}
Hebing Rui and Linliang Song.
\newblock Affine {B}rauer category and parabolic category {$\mathcal{O}$} in
  types {$B$}, {$C$}, {$D$}.
\newblock {\em Math. Z.}, 293(1-2):503--550, 2019.
\newblock \href {https://doi.org/10.1007/s00209-018-2207-x}
  {\path{doi:10.1007/s00209-018-2207-x}}.

\bibitem[RS20]{quantumAffineWreathAlgebras}
Daniele Rosso and Alistair Savage.
\newblock Quantum affine wreath algebras.
\newblock {\em Doc. Math.}, 25:425--456, 2020.

\bibitem[Sav19]{Savage_2019}
Alistair Savage.
\newblock Frobenius {H}eisenberg categorification.
\newblock {\em Algebr. Comb.}, 2(5):937--967, 2019.
\newblock \href {https://doi.org/10.5802/alco.73} {\path{doi:10.5802/alco.73}}.

\bibitem[Sav20]{affineWreathProductAlgebras}
Alistair Savage.
\newblock Affine wreath product algebras.
\newblock {\em Int. Math. Res. Not. IMRN}, (10):2977--3041, 2020.
\newblock \href {https://doi.org/10.1093/imrn/rny092}
  {\path{doi:10.1093/imrn/rny092}}.

\bibitem[SS21]{frobeniusNilHecke}
Alistair Savage and John Stuart.
\newblock Frobenius nil-{H}ecke algebras.
\newblock {\em Pacific J. Math.}, 311(2):455--473, 2021.
\newblock \href {https://doi.org/10.2140/pjm.2021.311.455}
  {\path{doi:10.2140/pjm.2021.311.455}}.

\bibitem[SS22]{saimaMSCThesis}
Saima Samchuck-Schnarch.
\newblock Frobenius {B}rauer categories.
\newblock Master's thesis, 2022.
\newblock \href {https://doi.org/10.20381/ruor-28137}
  {\path{doi:10.20381/ruor-28137}}.

\bibitem[SSS24]{diagrammaticsRealSupergroups}
Saima Samchuck-Schnarch and Alistair Savage.
\newblock Diagrammatics for real supergroups.
\newblock {\em Annals of Representation Theory}, 1(2):125--191, 2024.
\newblock \href {https://arxiv.org/abs/2301.01414} {\path{arXiv:2301.01414}},
  \href {https://doi.org/10.5802/art.7} {\path{doi:10.5802/art.7}}.

\end{thebibliography}

\end{document}